\numberwithin{equation}{section}
\theoremstyle{plain}
\newtheorem{Th}{Theorem}[section]
\newtheorem{Lem}[Th]{Lemma}
\newtheorem{Prop}[Th]{Proposition}
 \theoremstyle{definition}
\newtheorem{Def}[Th]{Definition}
\newtheorem{Rem}[Th]{Remark}
\newtheorem{thma}{Theorem}
\newtheorem{lema}{Lemma}
\newtheorem{propa}{Proposition}
\newcommand{\be} {\begin{equation}}
\newcommand{\ee} {\end{equation}}
\newcommand{\rt}{\mathbb{R}^{2}}
\newcommand{\f}{\mathcal{F}}
\newcommand{\g}{\Gamma^{\bm{\beta}}}
\newcommand{\si}{\sum_{i=1}^n}
\newcommand{\sj}{\sum_{j=1}^n}
\newcommand{\inte}{\int_{\rt}}
\newcommand{\rk}{\bm{\rho}_{\tau}^k}
\newcommand{\rka}{\bm{\rho}_{\tau}^{k-1}}
\newcommand{\rki}{\rho_{\tau,i}^k}
\newcommand{\rkai}{\rho_{\tau,i}^{k-1}}
\newcommand{\rkim}{\rho_{\tau_m,i}^k}
\newcommand{\rkaim}{\rho_{\tau_m,i}^{k-1}}
\newcommand{\rl}{\bm{\rho}_{\tau}^l}
\newcommand{\rla}{\bm{\rho}_{\tau}^{l-1}}
\newcommand{\rli}{\rho_{\tau,i}^l}
\newcommand{\rlai}{\rho_{\tau,i}^{l-1}}
\newcommand{\rti}{\rho_{\tau,i}}
\newcommand{\rkj}{\rho_{\tau,j}^k}
\newcommand{\fs}{\si\inte \rho_i(x)\ln \rho_i(x) dx + \si\sj\frac{a_{ij}}{4\pi}\inte \inte \rho_i(x)
\ln|x-y|\rho_j(y)dxdy}
\begin{document}

\title[Patlak-Keller-Segel system]{On Patlak-Keller-Segel system for several populations: 
A Gradient flow approach}
\author{Debabrata Karmakar and Gershon Wolansky} 

\address{Technion, Israel Institute of Technology, 32000 Haifa, Israel}

\email{dkarmaker@campus.technion.ac.il, gershonw@math.technion.ac.il}

\subjclass[2010]{Primary: 35K65, 35K40, Secondary: 35Q92}

\keywords{Chemotaxis for multi-species; Patlak-Keller-Segel system; Minimizing movement scheme;
Wasserstein distance}

\begin{abstract} 
We study the global in time existence of solutions to the parabolic-elliptic 
Patlak-Keller-Segel system of multi-species populations. We prove that if the initial mass 
satisfies an appropriate notion of sub-criticality, then the system has a solution defined for 
all time. We explore the gradient flow structure in the {\it Wasserstein space} to study the 
question of existence. Moreover, we show that the obtained solution satisfies energy dissipation 
inequality.
\end{abstract}

\maketitle
{\small\tableofcontents}

\section{Introduction} 
In this article, we study the global existence and uniqueness  of solutions to
the parabolic-elliptic Patlak-Keller-Segel system (PKS-system in short) 
for $n$ populations
interacting via a self produced 
chemical agent on the two-dimensional Euclidean space $\rt.$ The evolution of these living cells are
governed by the following system of equations
\begin{equation}\label{kss}
\begin{cases}
& \partial_t \rho_i(x,t) = \Delta_x \rho_i(x,t) - \sj a_{ij}\nabla_x \cdot 
\left(\rho_i(x,t)\nabla_x u_j(x,t)\right), \ in \ \rt \times (0,\infty),\\
&-\Delta_x u_i(x,t) = \rho_i(x,t), \hspace{5.8 cm} \ \ in \  \rt \times (0,\infty), \\
&\  \rho_i(x,0) = \rho_i^0 , \ \ i=1,\ldots, n,
\end{cases}
\end{equation}
where $\rho_i(x,t)$ denotes the cell density of the $i$-th population, $ u_i(x,t)$ denotes the 
concentration of the chemical, called {\it chemoattractant}, produced by the $i$-th population and $a_{ij}$ are  
constants denoting the sensitivity of the $i$-th population towards the chemical gradient produced
by the $j$-th population and $\rho_i^0$ is the initial cell distribution of the $i$-th population. 
Since the solutions to the  
Poisson equation $-\Delta u = \rho$ is unique up to a harmonic function, we define
concentration of the chemoattractant $u_i$ by the Newtonian potential of $\rho_i$
\begin{align}\label{newtonian potential}
 u_i(x,t) = -\frac{1}{2\pi}\inte \ln|x-y|\rho_i(y,t) \ dy, \ \ \ i=1,\ldots,n.
\end{align}

The sensitivity parameters 
$a_{ij}>0$ meaning that the $i$-th population is attracted to  the chemoattractant 
produced by the $j$-th population and tends to climb up its gradient.  
On the other hand, $a_{ij}<0$ meaning that $i$-th population is repelled from  the same and 
tends to climb down its gradient. In particular, $ a_{ii} > 0$  (or  $a_{ii} < 0$) is 
the condition of self-attraction (or, self-repulsion) 
of the population $i.$ The case $a_{ij}a_{ji} <0$ is the unhappy situation between the 
$i$-th and $j$-th population and is the origin of {\it conflict of interests}. 
In this article, we assume the sensitivity matrix $(a_{ij})$ is symmetric with non-negative entries 
$a_{ij}\geq 0, \ for \ all \ i,j$, that is the {\it conflict free} case.

The manifestation of a single population or the scalar case $n=1$ (where $a:= a_{11}$) has been the subject of 
intensive research over the past couple of decades. 
See \cite{Pa, KS, W1, Horsurvey1, H} for the biological motivations.  

Equation \eqref{kss} is a typical example of a conservative drift-diffusion equation. 
The smoothing effect induced by the diffusion term $\Delta \rho_i$ and the weighted cumulative
drift induced by the chemical gradients $\sj a_{ij}\nabla u_j$ which assists the cells to accumulate, 
compete against each other. It is well understood, at least for the scalar case, 
the $L^1$-norm of the initial datum is a salient parameter which separates the dichotomy between 
the global in time existence and the chemotactic collapse (or, finite time blow up). 
More precisely,
if the initial number of bacteria is smaller than the critical threshold $\inte \rho_0 < 8\pi/ a$ then 
the process of aggression counterbalanced by the diffusion \cite{BD}. However, if it crosses the critical 
threshold, i.e., $\inte \rho^0 > 8\pi/ a$ the production of the chemical agent attracting the cells 
increase so as that the diffusion can no longer compete against the drift force, resulting in an inevitable 
chemotactic collapse. The critical case $ \inte \rho^0 = 8\pi/a$ is the perfect balance between these 
two opposing forces. 
In this case, a solution exists globally in time \cite{BCM, BKLN}, but if the second moment 
of the initial data is finite, then the solutions concentrate
in the form of a Dirac delta measure as time $t$ goes to infinity \cite{BCM}. We refer the 
interested readers to \cite{Childress, NSenba, SeSu, SSbook, Subook, BT, BKLN, BD, BCM, 
BDP, BCCjfa, CD, FM} and the references therein devoted to the study of parabolic-elliptic PKS-system 
for single population,
and also excellent survey articles \cite{Horsurvey1, Horsurvey2, Blanchet} in this regard.

A solution $\bm{\rho} :=(\rho_1, \ldots, \rho_n)$ to the PKS-system \eqref{kss},
at least formally, possess the following fundamental identities: 
\begin{itemize}
 \item Conservation of mass:
 \begin{align} \label{mass Conservation}
   \inte \bm{\rho}(x,t) \ dx = \inte \bm{\rho}^0(x) \ dx = \bm{\beta}, \ \ for \ all \ t >0.
 \end{align}
\item Free energy dissipation or the free energy identity:
\begin{align} \label{free energy equality}
 \f(\bm{\rho}(\cdot , t)) + \int_0^t \mathcal{D}_{\f}(\bm{\rho}(\cdot, s)) \ ds = \f(\bm{\rho}^0),
\end{align}
\end{itemize}
where the free energy $\f$ is defined by
 \begin{align} \label{free energy}
 \f(\bm{\rho}) = \fs,
\end{align}
and the dissipation of free energy $\mathcal{D}_{\f}$ is defined by
\begin{align} \label{dissipation of f}
 \mathcal{D}_{\f}(\bm{\rho}) = \si  \inte \left|\frac{\nabla \rho_{i}(x)}{\rho_{i}(x)} 
 - \sj a_{ij}\nabla u_{j}(x)\right|^2 \rho_{i}(x) \ dx.
\end{align}
Moreover, if the second moment of the initial condition $M_2(\bm{\rho}^0) :=\si \inte|x|^2\rho^0_i(x) \ dx$ 
is finite then formally 
\begin{align}\label{second moment} 
 M_2(\bm{\rho}(\cdot,t)) \ dx = \frac{\Lambda_{I}(\bm{\beta})}{2\pi}t + M_2(\bm{\rho}^0),
\end{align}
where $I = \{1,\ldots, n\}$ and $\Lambda_I(\bm{\beta})$ is a quadratic polynomial in $\bm{\beta}$ defined by
\begin{align}\label{lambda}
 \Lambda_J(\bm{\beta}) := \sum_{i \in J} \beta_i \left(8\pi - \sum_{j \in J} a_{ij}\beta_j\right), \ \ for \ all \ \emptyset \neq J 
 \subset I.
\end{align}

In particular, when $n=1, \ \Lambda_{\{1\}}(\beta) = \beta(8\pi - a_{11} \beta).$
The critical constant $8\pi$ in single population emerges from the
time evolution of the
second moment \eqref{second moment}. However, the proof of existence of global in time solution 
in the sub-critical case is much more delicate issue and has been explored in \cite{BD} using the
energy method. One of the fundamental tool in their analysis is the
logarithmic Hardy-Littlewood-Sobolev (HLS) inequality in $\rt:$
\begin{align*}
 \f_{scalar}(\rho) := \inte \rho(x)\ln \rho(x) \ dx + \frac{a}{4\pi}\inte\inte \rho(x)\ln|x-y|\rho(y) \ dxdy
 \geq -C
\end{align*}
for all $\rho \in \Upsilon^{\beta}:=\{ \rho \in L^1_+(\rt)| \ 
\inte \rho \ln \rho <+\infty, \inte \ln(1+|x|^2)\rho < +\infty, \inte \rho = \beta\}$ if 
and only if $\beta = 8\pi/a.$ Roughly speaking, if $\inte \rho_0 < 8\pi/a$ and the 
total mass ($\inte \rho(x,t) \ dx$) being conserved for all time $t,$ the logarithmic HLS-inequality
gives an bound on the entropy of the solution ($\inte \rho \ln \rho$), which prevents 
the solutions to blow-up in finite time. 

Of particular, our interest lies to the alternative observation by Otto \cite{Otto} who saw that these 
class of PDEs \eqref{kss} inherits gradient flow structure in the space of probability
measures with respect to an appropriate metric. 
In their subsequent works, Jordan-Kinderlehrer and Otto implemented this idea
in the context of the heat and the Fokker-Planck equation \cite{JKO}.

We observe formally that the system \eqref{kss} can be written as
\begin{align*}
 \partial_t \rho_i = \nabla \cdot \left(\rho_i \nabla \frac{\delta \f}{\delta \rho_i}(\bm{\rho})\right), \ \ i=1,\ldots,n.
\end{align*}
This is the formal structure of a gradient flow of the free energy $\f$
in the space $\mathcal{P}^{\beta_1}(\rt)\times \cdots \times \mathcal{P}^{\beta_n}(\rt)$ 
equipped with the 2-Wasserstein distance $\bm{d}_{\mbox{{\tiny{W}}}}$ (see section $2$ for definition),
where $\mathcal{P}^{\beta_i}(\rt)$ denotes the space of non-negative Borel measures on $\rt$ with total mass 
$\beta_i$ and
$\frac{\delta \f}{\delta \rho_i}$ denotes the first variation of the functional $\f$ with respect to the 
variable $\rho_i.$
The functional $\f$ on the product space 
$\mathcal{P}^{\beta_1}(\rt)\times \cdots \times \mathcal{P}^{\beta_n}(\rt)$ is defined by $\f(\bm{\rho})$
if $\bm{\rho} \in \g,$  where 
\begin{align*}
 \g = \left\{\bm{\rho}= (\rho_i)_{i=1}^n | \ \rho_i \in L^1_+(\rt),
\inte \rho_i(x)\ln \rho_i(x) \ dx < +\infty, 
\inte \rho_i(x)\ dx = \beta_i, \right. \\
 \left.  \ \inte \rho_i(x)\ln(1+|x|^2) \ dx<+\infty\right\}
\end{align*}
and $+\infty$ elsewhere. 

The study of gradient flows in general metric spaces is substantially a vast subject and have been pioneered by 
Ambrosio, Gigli and Savar\'{e} in their book \cite{AGS}. However, to make sense of gradient flows
in a general metric space and establishing a complete existence-uniqueness theory requires a certain convexity 
assumption on the functional (for example $\lambda$-geodesic convexity) as well as on the metric 
(for example $C^2G^2$-condition).
The functional $\f$ does not possess such convexity property (it is neither convex in the usual sense nor,
displacement convex in the sense of McCann \cite{McCann}) and hence different approach is necessary.
Instead, we will rely on the time-discretized variational formulation introduce in \cite{JKO},
known as minimizing movement scheme or, JKO-scheme and the functional analytic framework 
to study the convergence of the scheme:
for a time step $\tau>0,$ define
\begin{align}\label{formal jko}
 \bm{\rho}^{k}_{\tau} \in \arg\min_{\bm{\rho}\in \g_2} \left(\f(\bm{\rho}) +\frac{1}{2\tau} \bm{d}_{\mbox{{\tiny{W}}}}^2
 (\bm{\rho}, \bm{\rho}^{k-1}_{\tau})\right), \ \ \ k \geq 1
\end{align}
with $\bm{\rho}^0_{\tau} = \bm{\rho}^0,$ where 
\begin{align*}
 \g_2:=\left\{ \bm{\rho} \in \g \ | \ M_2(\bm{\rho}) := \si \inte |x|^2\rho_i \ dx < +\infty \right\},
\end{align*}
{\it and throughout this article we will assume the initial condition}
$\bm{\rho}^0 \in \g_2.$ 

The existence of a minimizer to  (\ref{formal jko}) depends on several relations depending on $\bm{\beta}$ 
and the interaction matrix $(a_{ij}).$ More precisely,
the boundedness from below of $\f$ and the functional $\bm{\rho}
\longmapsto \f(\bm{\rho}) + \frac{1}{2\tau}\bm{d}_{\mbox{{\tiny{W}}}}^2(\bm{\rho}, \bm{\eta})$ depends on the 
the following optimal relations: 
\begin{align} \label{beta}
 \begin{cases}
  \Lambda_J(\bm{\beta}) \geq 0, \ \ for \ all \ \emptyset \neq J 
 \subset \{1,\ldots, n\}, \\
 \mbox{ {\it if  for  some}} \ J, \ \Lambda_J(\bm{\beta}) = 0, \ then \ a_{ii} + \Lambda_{J\backslash \{i\}}(\bm{\beta})>0, \ \forall 
 i \in J
 \end{cases}
\end{align}
where $\Lambda_J({\bm{\beta}})$ is defined by \eqref{lambda}.
In particular, it is shown in \cite{CSW,SW} that 
$\Lambda_I(\bm{\beta}) = 0$ and \eqref{beta} is necessary and sufficient condition 
for the boundedness from below of $\f$ over $\g.$ On the other hand, if $\bm{\eta} \in \g_2,$ then 
\eqref{beta} is necessary and sufficient for  $\f(\cdot) + \frac{1}{2\tau}\bm{d}_{\mbox{{\tiny{W}}}}^2
(\cdot, \bm{\eta})$ to be bounded from below on $\g_2.$  

The existence of minimizers in \eqref{formal jko} is, however, a delicate question. To give a
glimpse of this question let us mention some results from \cite{KW}.
In \cite{KW} we considered the case $\bm{\eta} = (\beta_1\delta_{v_1}, \ldots, \beta_n\delta_{v_n})$
where $v_i$ are arbitrary vectors in $\rt$ and $\delta_{v_i}$ is the Dirac measure at $v_i.$ We
found that if $\bm{\beta}$ is {\it sub-critical}, that is $\Lambda_J(\bm{\bm{\beta}}) > 0$
for all $J$ then there always exists a minimizer. However, if $\bm{\beta}$ is {\it critical}, that is 
$\Lambda_I(\bm{\beta}) = 0$ and $\Lambda_J(\bm{\beta})>0$ for all $J \neq I,$ then the existence 
of a minimizer depends on the gap between $v_i$s.
In particular, for $n=2$ and $\bm{\beta}$ critical we showed that if $v_1 = v_2$ then no 
minimizer exists, on the other hand if $|v_1-v_2|$ is large enough then a minimizer does exist.
The optimal gap for which a minimizer exists is still an open question.
However, we do not address such questions in this article
and only consider the case $\bm{\beta}$ sub-critical.
\begin{Def}[Sub-critical] 
$\bm{\beta}$ is said to be sub-critical if and only if
\begin{align*}
 \Lambda_J(\bm{\beta})  > 0, \ \ for \ all \ \emptyset \neq J \subset I.
\end{align*}
 \end{Def}
 Before proceeding further let us first introduce the appropriate notion of a weak solution
 to the PKS-system \eqref{kss}. Throughout this article, we use the notation $\mathcal{H}(\bm{\rho})
 := \si \inte \rho_i \ln \rho_i$ to denote the entropy of the solutions and 
 $\mathcal{H}_+(\bm{\rho})
 := \si \inte \rho_i (\ln \rho_i)_+$ is the positive part of the entropy.
 \begin{Def} \label{weak sol}
  For any initial data $\bm{\rho}^0$ in $\g_2$ and $T^*>0$ we say that 
  a nonnegative vector valued function $\bm{\rho} \in \left(C([0,T^*); \mathcal{D}^{\prime}(\rt))\right)^n$
  satisfying
  \begin{align*}
   \mathcal{A}_T(\bm{\rho}) :=\sup_{t \in [0,T]} \Big(\mathcal{H}_+(\bm{\rho}(t)) + M_2(\bm{\rho}(t))\Big)
+ \int_0^T \mathcal{D}_{\f}(\bm{\rho}(t)) \ dt < +\infty, \ \forall \ T \in (0,T^*)
  \end{align*}
is a weak solution to the PKS-system \eqref{kss} on the time interval $(0,T^*)$
associated to the initial condition $\bm{\rho}^0$ if $\bm{\rho}$ satisfies \eqref{mass Conservation}
and 
 \begin{align*}
  &\int_0^{T^*} \inte \partial_t\xi(x,t) \rho_i(x,t) \ dxdt + \inte \xi(x,0) \rho_i^0(x) \ dx \\
 &-\int_{0}^{T^*} \inte  \rho_i(x,t)\left( \frac{\nabla_x \rho_i(x,t)}{\rho_i(x,t)}  
 -\sj a_{ij}  \nabla_x u_j(x,t)\right) \cdot \nabla_x \xi(x,t) \ dxdt = 0
 \end{align*}
  for all $\xi \in C_c^{2}([0,T^*)\times \rt)$ and for all $i=1,\ldots,n.$ If $T^* = +\infty$ we say $\bm{\rho}$ is a global
  weak solution of the system.
  \end{Def}

Notably, by virtue of finite energy dissipation and Cauchy-Schwartz inequality all the 
terms in the weak formulation of \eqref{kss} makes sense.
The gradient flow structure for a single population PKS-system $n=1$ with sub-critical 
mass has been explored earlier in \cite{JKO2}.
However, there is an essential difference between the gradient flow structure of the single population 
and that of multi-population, as explained below:

The first difficulty arises in pursuance of  the Euler-Lagrange equation 
for the variational scheme \eqref{formal jko}. For the minimization problem of type \eqref{formal jko}, 
is standard (in the theory 
of optimal transport) to consider the variation $\bm{\rho}_{\epsilon} = \bm{T}_{\epsilon}\#\rk$
of the minimizer $\rk$ and compute $\lim_{\epsilon \rightarrow 0+}\frac{1}{\epsilon}
\left(\f(\bm{\rho}_{\epsilon}) - \f(\rk) + \frac{1}{2\tau}(\bm{d}_{\mbox{{\tiny{W}}}}^2(\bm{\rho}_{\epsilon}, \rka)
- \bm{d}_{\mbox{{\tiny{W}}}}^2(\rk,\rka))\right) \geq 0,$
where $\bm{T}_{\epsilon}$ is a $C^{\infty}$ diffeomorphism of $(\rt)^n$
and $\bm{T}_{\epsilon}\#$ denotes the push forward of a measure under $\bm{T}_{\epsilon}.$
In particular, we are interest in  
$\bm{T}_{\epsilon}$ of the form $\bm{T}_{\epsilon} = \bm{I} + \epsilon \bm{\zeta}, \ \bm{\zeta} = (\zeta_1,\ldots, \zeta_n)$
and $\zeta_i \in C_c^{\infty}(\rt;\rt).$ This is where the first major difficulty arises. The 
minimizing movement scheme contains terms of three types: the entropy term, interaction energy
term and the Wasserstein distance term. For $\bm{T}_{\epsilon}$ is being chosen of this 
particular form, we can evaluate the limits
\begin{align*}
 \frac{\mathcal{H}(\bm{\rho}_{\epsilon}) - \mathcal{H}(\rk)}{\epsilon}, \ \ \ \ 
 \frac{\bm{d}_{\mbox{{\tiny{W}}}}^2(\bm{\rho}_{\epsilon}, \rka)
- \bm{d}_{\mbox{{\tiny{W}}}}^2(\rk,\rka)}{2\epsilon}
\end{align*}
where $\epsilon\rightarrow 0+$. However, the interaction energy contains terms of the form
\begin{align}\label{dificulty}
 \frac{1}{\epsilon}\inte\inte \left(\ln|x-y+\epsilon(\zeta_i(x)-\zeta_j(y))| - \ln|x-y|\right)\rki(x)\rkj(y) dxdy.
\end{align}
To see the difficulty, for $i \neq j$ we choose $\zeta_j \equiv 0$ to obtain formally in the limit 
(up to a constant factor times) $\inte \zeta_i(x) \cdot \nabla u_{\tau,j}^k(x) \rho_{\tau,i}^k(x),$ as 
$\epsilon \rightarrow 0+$.
The a-priori bound on the entropy of  $\rki$ implies that   the Newtonian potentials 
$u_{\tau,j}^k$ belongs to $H^1_{loc}(\rt)$ only, and so this limit is not properly defined.  
On the other hand if $i=j$ (as in the case $n=1$) then it is easy to pass to the limit 
$\epsilon \rightarrow 0+$ 
\begin{align*}
 \inte \inte \frac{(\zeta_i(x) - \zeta_i(y))\cdot (x-y)}{|x-y|^2} \rki(x)\rki(y) \ dxdy,
\end{align*}
which is well defined because $\zeta_i$ is smooth and compactly supported. To make sense of the terms 
corresponding to $ i \neq j,$  we  need higher regularity of the solution $\rki.$ In precise, $\rki$ 
must be at least $L^2(\rt)$ regular. We adopt the flow interchange technique introduced by 
Matthes, McCann and Savar\'{e} \cite{MMS} in spirit of \cite{BL13, Hybrid}
to tackle this issue. 
The next difficulty arises in passing to a limit in the Euler-Lagrange equation mentioned above. 
For the convergence of the scheme, we need a uniform in time estimates on the Newtonian potentials. 
It follows from the scheme that  the time interpolates have $\frac{1}{2}$-H\"{o}lder estimates in time with 
respect to the Wasserstein distance. The H\"{o}lder regularity only gives the continuity estimates on the 
harmonic part of its Newtonian potential, but not on the Newtonian potential. We resolve this issue by using 
a refined Arzel\`{a}-Ascoli's lemma
combined with the uniform decay estimates which guarantee the convergence of interaction energy terms 
for all time.

Before concluding the introduction, we would like to refer the interested readers 
to \cite{BL13, Hybrid} for related articles 
in the context of parabolic-parabolic PKS system of singles populations and to
\cite{two1,two2,two3,two4,two5} for the two component chemotactic systems.
For a profound discussions on the gradient flow in Wasserstein space and its application to a 
large class of PDEs, we refer to
\cite{Villani03,AGS,San}. 

\subsection{Main results} The main result of this article is the following:

\begin{Th}\label{main}
Assume $\bm{\rho}^0 \in \g_2$ and $\bm{\beta}$ is sub-critical. Then the PKS-system \eqref{kss} admits 
a global weak solution $\bm{\rho}$ in the sense of definition \ref{weak sol} with initial data $\bm{\rho}^0.$
Moreover, $\bm{\rho}$ satisfies for every $T>0$
\begin{itemize}
  \item[(a)] $\bm{\rho} \in \left(L^2((0,T)\times \rt)\right)^n \cap \left(L^1(0,T;W^{1,1}(\rt))\right)^n,$ and 
  Fischer information bound:   
 \begin{align*}
 \si\int_{0}^{T}  \inte \left|\frac{\nabla \rho_{i}(x,t)}{\rho_{i}(x,t)}\right|^2 \rho_{i}(x,t) \ dxdt < +\infty.
\end{align*}
\item[(b)] Free energy inequality:
\begin{align*} %\label{fei}
\si\int_{0}^{T}  \inte \left|\frac{\nabla \rho_{i}(x,t)}{\rho_{i}(x,t)} 
 - \sj a_{ij}\nabla u_{j}(x,t)\right|^2 \rho_{i}(x,t) \ dxdt + \f(\bm{\rho}(T)) \leq \f(\bm{\rho}^0).
\end{align*}
\item[(c)] The weak solution obtained in (a) is unique. 
\end{itemize}
\end{Th}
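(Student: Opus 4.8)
The plan is to realize $\bm\rho$ as a limit of the piecewise-constant and piecewise-geodesic interpolates of the JKO scheme \eqref{formal jko}, and to carry through the three classical steps of the minimizing-movement method: (i) a-priori estimates, (ii) compactness, (iii) identification of the limit as a weak solution together with the energy inequality. Throughout, sub-criticality of $\bm\beta$ is used via the results quoted after \eqref{beta}: it guarantees that $\f$ is bounded below on $\g$ and that $\bm\rho\mapsto\f(\bm\rho)+\frac{1}{2\tau}\dis(\bm\rho,\bm\eta)$ is bounded below on $\g_2$, so that each step of \eqref{formal jko} is well posed. First I would fix $\tau>0$, produce the sequence $(\rk)_{k\ge0}$, and record the standard discrete estimates: the total mass is conserved at $\bm\beta$ (this is built into $\g_2$); summing the minimality inequality $\f(\rk)+\frac{1}{2\tau}\dis(\rk,\rka)\le\f(\rka)$ over $k$ gives $\f(\rho_\tau^m)+\frac{1}{2\tau}\sum_{k=1}^m\dis(\rk,\rka)\le\f(\bm\rho^0)$, hence a uniform bound on $\sum_k\dis(\rk,\rka)$ (using boundedness below of $\f$) and on $\f(\rk)$; and a second-moment growth bound of the type $M_2(\rk)\le C(1+k\tau)$, obtained by testing minimality against a dilation of the measure and using \eqref{second moment}–\eqref{lambda}. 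From the bound on $\f(\rk)$ together with the logarithmic-HLS-type lower bound implicit in sub-criticality one extracts a uniform bound on $\mathcal H_+(\rk)$ and on $M_2(\rk)$ on bounded time intervals. The $\frac12$-Hölder-in-time estimate $\dis(\rho_\tau(t),\rho_\tau(s))\le C|t-s|^{1/2}+C\tau^{1/2}$ for the piecewise-constant interpolate follows from the telescoped distance bound.

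The extra ingredient beyond the scalar case is the $L^2_{t,x}$ regularity needed to make sense of the cross terms in \eqref{dificulty}; this is exactly where I expect the main obstacle to lie. I would obtain it by the flow-interchange device of Matthes–McCann–Savaré \cite{MMS}: use the heat semigroup $(S_s)_{s\ge0}$ (acting componentwise) as an auxiliary gradient flow, whose driving functional is the entropy $\mathcal H$; the Evolution Variational Inequality for the heat flow, combined with minimality of $\rk$, yields $\f(S_s\rk)+\frac{1}{2\tau}\dis(S_s\rk,\rka)\ge\f(\rk)+\frac{1}{2\tau}\dis(\rk,\rka)$, and dividing by $s$, letting $s\to0+$, and using that $-\frac{d}{ds}\mathcal H(S_s\rho)$ is the Fisher information while $-\frac{d}{ds}\f(S_s\rho)$ produces $-\mathcal D_\f(\rho)$ plus an entropy-type term controlled by $\|\rho\|_{L^2}$, one arrives at a discrete estimate of the form $\tau\sum_k\mathcal D_\f(\rk)+\tau\sum_k\si\|\nabla\rho^k_{\tau,i}\|_{L^1}^{?}\lesssim$ bounded, and in particular a uniform bound on $\si\int_0^T\!\!\inte|\nabla\rho_i|^2/\rho_i\,\rho_i$ and on $\|\rho_\tau\|_{L^2((0,T)\times\rt)}$. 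The delicate points here are that the interaction part of $\f$ is not displacement convex, so one must track the defect terms carefully, and that the singular kernel $\ln|x-y|$ must be handled using the entropy/second-moment bounds (the Carleman-type estimate controlling $\inte\rho_i|\ln|x-y||\rho_j$ by $\mathcal H_+$ and $M_2$), together with the sub-criticality condition to absorb the resulting quadratic-in-$\bm\beta$ contributions.

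With these estimates in hand I would pass to the limit $\tau_m\to0$. The Hölder-in-time bound plus tightness (from the $M_2$ bound) gives, via a refined Arzelà–Ascoli argument in $(C([0,T];\mathcal P^{\beta_i}(\rt)\text{-weak}))$ as indicated in the introduction, a subsequence with $\rho_{\tau_m}(t)\rightharpoonup\rho(t)$ for every $t$, narrowly and in $\dis$; the $L^2_{t,x}$ bound upgrades this to weak $L^2((0,T)\times\rt)$ convergence and, with the Fisher-information bound, to strong $L^1_{loc}$ convergence of $\rho_{\tau_m}$ and weak convergence of $\nabla\rho_{\tau_m,i}$ in $L^1$, so that $\rho_i\in L^1(0,T;W^{1,1})$ and part (a) holds. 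For the Newtonian potentials I would use the uniform decay/tightness estimates to show $\nabla u_{\tau_m,j}\to\nabla u_j$ in a sense strong enough (e.g. $L^2_{loc}$, using $L^2_{t,x}$ control of $\rho$ and Calderón–Zygmund/Riesz-potential bounds) that the products $\rho_i\nabla u_j$ converge in the sense of distributions; this is the place where the ``refined Arzelà–Ascoli plus uniform decay'' remark is used to prevent mass escaping to infinity. Writing the discrete Euler–Lagrange equation (derived, as in the introduction, by pushing forward along $\bm I+\epsilon\bm\zeta$ and using the $L^2$ regularity to pass $\epsilon\to0$ in \eqref{dificulty}) in weak form against $\xi\in C_c^2([0,T^*)\times\rt)$, and letting $\tau_m\to0$, yields the weak formulation of Definition \ref{weak sol}. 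The free-energy inequality (b) follows from the telescoped discrete inequality $\f(\rho_\tau^m)+\tau\sum_{k\le m}\mathcal D_\f(\rk)\le\f(\bm\rho^0)$ by lower semicontinuity of $\f$ and of the dissipation functional (a convexity-in-the-gradient, weak-$L^2$ lower-semicontinuity argument) along the converging subsequence.

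Finally, for uniqueness (c) I would argue directly at the PDE level rather than through the scheme: given two weak solutions $\bm\rho,\tilde{\bm\rho}$ with the same initial data and the finite-energy/$L^2$ regularity of part (a), estimate the evolution of $\dis(\bm\rho(t),\tilde{\bm\rho}(t))$ (or, alternatively, of an $H^{-1}$-type or relative-entropy functional) using the weak formulation; the diffusion term is contractive, the drift term contributes a commutator of the form $\si\inte(\rho_i-\tilde\rho_i)\sj a_{ij}\nabla u_j$-type expressions which, after an integration by parts and use of the $L^2((0,T)\times\rt)$ bounds on the densities together with Hardy–Littlewood–Sobolev/Gagliardo–Nirenberg interpolation, is controlled by $C(t)\,\dis(\bm\rho(t),\tilde{\bm\rho}(t))$ with $C\in L^1_{loc}$; Grönwall then forces $\bm\rho\equiv\tilde{\bm\rho}$. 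The main obstacle overall is Step (ii)'s flow-interchange estimate: making the non-displacement-convex logarithmic interaction energy cooperate with the heat flow to yield the $L^2_{t,x}$ bound, uniformly in $\tau$, while controlling the singular kernel via entropy and second-moment bounds and using sub-criticality to close the estimate.
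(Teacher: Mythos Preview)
Your overall architecture matches the paper's and is correct in outline, but two points need correction. First, a small slip: sub-criticality does \emph{not} make $\f$ bounded below on $\g$ (by Proposition~\ref{propertyf}(a) that requires $\Lambda_I(\bm\beta)=0$); what sub-criticality gives is that $\f_{\bm\alpha}=\f+\sum_i\alpha_i M_2(\rho_i)$ is bounded below for any $\bm\alpha>0$, and the a-priori estimate (Lemma~\ref{apriori}) is obtained by adding a small second-moment term to the telescoped inequality and absorbing part of the Wasserstein sum via the triangle inequality.

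The genuine gap is in your derivation of (b). The telescoped JKO inequality is $\f(\rk)+\frac{1}{2\tau}\sum_{l\le k}\dis(\rl,\rla)\le\f(\bm\rho^0)$, and the Euler--Lagrange identity (Lemma~\ref{el}(b)) gives $\frac{1}{\tau^2}\dis(\rl,\rla)=\mathcal D_{\f}(\rl)$; together these yield only $\f(\rk)+\tfrac12\,\tau\sum_{l\le k}\mathcal D_{\f}(\rl)\le\f(\bm\rho^0)$, i.e.\ half of the dissipation you claim. The paper recovers the full coefficient by introducing the De~Giorgi variational interpolation $\bm{\tilde\rho}_\tau$ and proving a discrete energy \emph{identity} (Lemma~\ref{dei}) containing $\tfrac12\mathcal D_{\f}(\bm{\rho}_\tau)+\tfrac12\mathcal D_{\f}(\bm{\tilde\rho}_\tau)$; it then shows (Lemma~\ref{dgie} and \eqref{same limit}) that $\bm{\tilde\rho}_\tau$ enjoys the same Fisher and compactness bounds and converges to the same limit $\bm\rho$, so both halves contribute in the limit. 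Without this step you obtain only the inequality with coefficient $\tfrac12$ in front of the dissipation. For uniqueness (c), the paper does not use a Wasserstein/$H^{-1}$ Gr\"onwall argument but follows Fern\'andez--Mischler \cite{FM}: first upgrade regularity via hypercontractivity (a DiPerna--Lions renormalization using the Fisher-information bound from (a)), then compare two solutions through the mild heat-semigroup formulation in the scale $t^{1/4}\|\cdot\|_{L^{4/3}}$. Your route may be viable, but producing an $L^1_{loc}$ Gr\"onwall coefficient for the drift term at the regularity level of part (a) is not straightforward, whereas the paper's approach sidesteps this by first gaining smoothness for $t>0$.
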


We divide the article into the following sections:
In section $2$ we introduce a few notations used throughout
this article and recall a few known properties of the Wasserstein distance and the free energy 
functional $\f.$ In section $3$ we propose the time-discretized JKO-scheme (see \eqref{mms}) and 
prove its well-definedness. Section 4 is devoted to the a priori estimates and regularity estimates 
for the discrete time interpolates, which is a crucial step towards establishing the Euler Lagrange 
equation obtained in Section 5. In section 6 we define the time interpolation and prove the a priori 
estimates and regularity results as a byproduct of the results obtained in section 4. In section 7 we 
prove the convergence of the scheme and hence obtain the existence of a solution to the PKS-system \eqref{kss}.
Finally, in section $8,$ we show that the obtained solution satisfies the free energy inequality 
Theorem \ref{main}(b) and prove the uniqueness result. 

\vspace{0.2 cm}

\noindent
{\bf Acknowledgement:} D. Karmakar is partially supported by Technion fellowship grant.

\section{Notations and Preliminaries}
\subsection{Wasserstein distance}
In this section, we recall the definition of Wasserstein 2-distance 
(also called the Monge-Kantorovich distance of order $2$) and some of its well-known properties. 

Let $\mathcal{P}(\rt)$ be the space of all Borel probability measures on $\rt,$ 
$\mathcal{P}_{2}(\rt)$  denotes the subset of $\mathcal{P}(\rt)$ having finite second
moments and $\mathcal{P}_{ac,2}(\rt)$ denotes the subset of $\mathcal{P}_2(\rt)$ which are
absolutely continuous with respect to the Lebesgue measure on $\rt.$
 
Given two elements $\mu, \nu$ of $\mathcal{P}(\rt)$ and a map $T: \rt \rightarrow \rt,$ we say $T$ pushes
forward $\mu$ to $\nu,$ denoted by $T\#\mu = \nu,$ if for every Borel measurable subset $U$ of $\rt,$
$\nu(U) = \mu(T^{-1}(U)).$ Equivalently, 
\begin{align}\label{cov}
 \inte \psi(x) d\nu(x) = \inte \psi(T(x)) d\mu(x), \ \ for \ every \ \psi \in L^1(\rt,d\nu).
\end{align}

On $\mathcal{P}_2(\rt)$ we can define a distance $d_{\mbox{{\tiny{W}}}},$ using the Monge-Kantorovich transportation problem with quadratic
cost function $c(x,y) = |x-y|^2.$ More precisely, given $\mu, \nu \in \mathcal{P}_2(\rt)$ define
\begin{align}\label{mk}
 d_{\mbox{{\tiny{W}}}}^2(\mu,\nu) := \inf_{\pi \in \Pi(\mu,\nu)} \left[\inte \inte |x-y|^2 \ \pi(dxdy) \right],
\end{align}
where
\begin{align*}
\Pi(\mu,\nu):= \{\pi \in \mathcal{P}(\rt \times \rt)| \ (P_1) \# \pi= \mu, \ (P_2) \# \pi= \nu\},
\end{align*}
is the set of transport plans and $P_i: \rt \times \rt \rightarrow \rt$ denotes the canonical projections 
on the $i$-th factor.

The celebrated  theorem of Brenier \cite{Brenier} asserts that: if $\mu \in \mathcal{P}_{ac,2}(\rt)$ then there exists a unique 
(up to additive constants) convex, lower semi continuous function $\varphi$ such that $\nabla \varphi \# \mu = \nu$ and the optimal
transference plan $\hat \pi$ on the right hand side of \eqref{mk} is given by $\hat \pi = (Id, \nabla \varphi)
\# \mu,$ where $Id : \rt \rightarrow \rt$ is the identity mapping (see \cite[Theorem $2.12$]{Villani03}). 
As a consequence, we have 
\begin{align} \label{wd1}
 d_{\mbox{{\tiny{W}}}}^2(\mu, \nu) = \inte |x-\nabla \varphi(x)|^2 d\mu(x), \ \ \ where \ 
 \nabla \varphi \# \mu = \nu.
\end{align}

Note that if $\mu, \nu$ are two non-negative measures on $\rt$ (not necessarily probability measures) satisfying 
the total mass compatibility condition
$\mu(\rt) = \nu(\rt) (=\beta >0),$ then we can also define the Wasserstein $2$-distance between them as
follows:
\begin{align}\label{wd}
 d_{\mbox{{\tiny{W}}}}(\mu,\nu) = \beta^{\frac{1}{2}}d_{\mbox{{\tiny{W}}}}\left(\frac{\mu}{\beta}, \frac{\nu}{\beta}\right).
\end{align}
We will denote by $\mathcal{P}^{\beta}(\rt)$ the space of non-negative Borel measures with total mass $\beta$
and $\mathcal{P}^{\beta}_{2}(\rt)$ and $\mathcal{P}^{\beta}_{ac,2}(\rt)$ are defined analogously. We will also
use the bold $\bm{\beta}$ notation in $\mathcal{P}^{\bm{\beta}}(\rt)$ to denote the product space $\mathcal{P}^{\beta_1}(\rt)
\times \cdots \times \mathcal{P}^{\beta_n}(\rt).$

One advantage of defining the Wasserstein distance on $\mathcal{P}^{\beta}(\rt)$ by \eqref{wd} is that, 
if $\mu$ is absolutely continuous with respect 
to the Lebesgue measure and if $\nabla \varphi$ is the gradient of a convex function pushing $\mu/\beta$ forward to 
$\nu/\beta$ then $\nabla \varphi \# \mu = \nu$ and 
\begin{align*}
 d_{\mbox{{\tiny{W}}}}^2(\mu, \nu) = \inte |x - \nabla \varphi (x)|^2 d\mu(x),
\end{align*}
where note that $d_{\mbox{{\tiny{W}}}}(\mu, \nu)$ is defined by \eqref{wd}.

\subsection{Change of variable formula}
Let $\mu, \nu \in \mathcal{P}_{ac,2}(\rt)$ be two probability measures and let $f$ and $g$ be their respective densities. Let $\varphi$
be a convex function such that $\nabla \varphi \# fdx = gdx.$ $\varphi$ being convex, by Aleksandrov's theorem, 
it is twice differentiable almost everywhere in its domain of definition. Let $D^2\varphi$ denotes the Hessian 
matrix of $\varphi$ (which is well defined a.e. on $\{\varphi < +\infty\}$) and let $det_A D^2\varphi$ be the determinant of $D^2\varphi.$ 
Here we use the notation $det_A$ because of fact that $det_A D^2\varphi$ is the absolutely continuous part of the 
Hessian measure $det_H D^2\varphi.$ The following change of variable formula can be found in 
\cite{McCann} (see also Villani \cite[Theorem $4.8$]{Villani03}):

{\it
For every measurable function $U : [0,\infty) \rightarrow \mathbb{R}$ such that $U$ is bounded from below and $U(0)=0$
\begin{align*}
\inte U(g(x))dx = \inte U\left(\frac{f(x)}{det_A D^2 \varphi(x)}\right)det_A D^2 \varphi(x)dx.
\end{align*}
}
In particular, choosing $U(t) = t\ln t$ and $f,g$ any non-negative densities satisfying mass compatibility condition
$\inte f(x)dx = \inte g(x)dx = \beta$ with  $\nabla \varphi \# fdx = gdx$ then 
\begin{align} \label{cv2}
\inte g(x)\ln g(x) dx = \inte f(x)\ln f(x) dx - \inte f(x) \ln \left(det_A D^2 \varphi(x)\right) dx.
\end{align}

\subsection{Properties of the free energy functional}
We end this section by recalling a few well known properties of the free energy functional $\f$
 whose proof can be found in \cite{SW,KW}.
\begin{Prop}\label{propertyf}
The followings hold:
 \begin{itemize}
  \item[(a)] $\f$ is bounded from below on $\g$ if and only if $\bm{\beta}$ satisfies 
  \begin{align*}
   \Lambda_I(\bm{\beta}) = 0 \ \ and \ \eqref{beta}.
\end{align*}
\item[(b)] For any $n$-numbers $\alpha_i >0,$ the functional 
\begin{align*}
\f_{\bm{\alpha}}(\bm{\rho}):=\f(\bm{\rho}) + \si \alpha_iM_2(\rho_i) 
\end{align*}
is bounded from below on $\g_2$ if and only if $\bm{\beta}$ satisfies \eqref{beta}.
\item[(c)] The functionals $\f$ and $\f_{\bm{\alpha}}$ are sequentially lower semi-continuous with respect to the
weak topology of $L^1(\rt).$
\item[(d)] If $\bm{\beta}$ if sub-critical and $\min_{i \in I} \alpha_i>0,$ then all the sub-level 
sets $\{\f_{\bm{\alpha}} \leq C\}$ are sequentially precompact 
with respect to the weak topology of $L^1(\rt).$
\end{itemize}
\end{Prop}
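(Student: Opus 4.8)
All four claims are contained in \cite{CSW,SW,KW}; the route I would take is the following.

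\emph{Necessity in (a) and (b).} The natural test objects are the dilations $\rho_i^{\lambda}(x):=\lambda^2\rho_i(\lambda x)$, which preserve each mass $\beta_i$ and satisfy
\begin{align*}
\f(\bm\rho^{\lambda})&=\f(\bm\rho)+\frac{\ln\lambda}{4\pi}\,\Lambda_I(\bm\beta),\qquad M_2(\rho_i^{\lambda})=\lambda^{-2}M_2(\rho_i);
\end{align*}
letting $\lambda\to 0^+$ and $\lambda\to\infty$ forces $\Lambda_I(\bm\beta)=0$ once $\f$ is bounded below on $\g$, while for $\f_{\bm\alpha}$ the moment term rules out the $\lambda\to0^+$ branch, so only $\Lambda_I(\bm\beta)\ge0$ remains. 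To force $\Lambda_J(\bm\beta)\ge0$ for a proper subset $J\subsetneq I$ I would dilate only the populations indexed by $J$ towards the origin while keeping those indexed by $I\setminus J$ equal to a fixed profile supported in a ball: the $J$--$(I\setminus J)$ interactions then converge to a constant, the $J$-block contributes $\tfrac{\ln\lambda}{4\pi}\Lambda_J(\bm\beta)+O(1)$, the total second moment stays bounded, and $\lambda\to\infty$ yields $\Lambda_J(\bm\beta)\ge0$ for both functionals. Finally, when $\Lambda_J(\bm\beta)=0$ for some $J$, a nested two-scale concentration — the block $J\setminus\{i\}$ at scale $\lambda^{-1}$ and the single population $i$ at a far finer scale inside it — isolates a leading logarithmic term whose coefficient is a positive multiple of $a_{ii}+\Lambda_{J\setminus\{i\}}(\bm\beta)$, which must then be positive; this is the second line of \eqref{beta}.

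\emph{Sufficiency in (a) and (b): the multi-species logarithmic HLS inequality.} I would argue by induction on $n$, running a concentration--compactness analysis of a minimizing sequence for $\f$ on $\g$ (resp.\ $\f_{\bm\alpha}$ on $\g_2$). After removing the dilation invariance in (a) — or exploiting the coercive moment term in (b), which already excludes vanishing and spreading — Lions' dichotomy leaves two scenarios: either the mass splits into a subset $J$ and a complement escaping to infinity, in which case the energy decouples as the free energy of the $J$-block plus that of the $I\setminus J$-block plus $o(1)$ and the inductive hypothesis applies to each, or a subset $J$ concentrates at a point, in which case a blow-up analysis shows the escaping energy equals $\tfrac{\ln\lambda}{4\pi}\Lambda_J(\bm\beta)+o(\ln\lambda)\ge0$ by \eqref{beta}, and at the threshold $\Lambda_J(\bm\beta)=0$ the next order is governed by $a_{ii}+\Lambda_{J\setminus\{i\}}(\bm\beta)>0$. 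Hence no energy escapes, the infimum is achieved, and in particular is finite; for (b) one only invokes \eqref{beta} (with $J=I$ giving $\Lambda_I(\bm\beta)\ge0$). I expect this sufficiency part to be the main obstacle: the delicate bookkeeping of how much energy can bubble off under concentration of an \emph{arbitrary} sub-family, and — when that sub-family is itself at criticality — under nested concentration of a single component, is precisely what forces the two-tier form of \eqref{beta}.

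\emph{(c).} For the entropy, the integrand $t\ln t$ is convex and superlinear, so $\bm\rho\mapsto\si\inte\rho_i(\ln\rho_i)_+$ is weakly lower semicontinuous on $L^1$, while the negative part is controlled by the very definition of $\g$ (resp.\ $\g_2$) through the elementary bound $\inte\rho_i(\ln\rho_i)_-\le C\big(\inte\rho_i\ln(1+|x|^2)\,dx+1\big)$ (resp.\ $\le C(M_2(\rho_i)+1)$), which allows passing to the limit in it. For the interaction term I would split $\ln|x-y|=\ln^+|x-y|-\ln^-|x-y|$: the far-field part is bounded by $\ln(1+|x|)+\ln(1+|y|)$, hence by the logarithmic moments, and passes to the limit by tightness and Fatou, while the near-diagonal part has a fixed sign and is dominated by the entropy; assembling these gives the lower semicontinuity of $\f$, and adding the (weakly lsc) moment term that of $\f_{\bm\alpha}$. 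The detailed estimates are those of \cite{SW,KW}; see also \cite{BDP}.

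\emph{(d).} The key point is that sub-criticality leaves room: $\Lambda_J(\bm\beta)>0$ for all $J$, so for $\delta>0$ small the rescaled symmetric non-negative matrix $\tilde a_{ij}:=a_{ij}/(1-\delta)$ still makes $\bm\beta$ satisfy \eqref{beta} (strictly). Writing $\f^{\tilde a}_{\widehat{\bm\alpha}}$ for the functional of (b) built from $\tilde a$ and from $\widehat\alpha_i:=(1-\delta')\alpha_i/(1-\delta)>0$, one has the algebraic identity
\begin{align*}
\f_{\bm\alpha}(\bm\rho)&=\delta\,\mathcal H(\bm\rho)+\delta'\si\alpha_iM_2(\rho_i)+(1-\delta)\,\f^{\tilde a}_{\widehat{\bm\alpha}}(\bm\rho)\qquad(0<\delta'\le\delta),
\end{align*}
and part (b) applied to $\tilde a$ bounds $\f^{\tilde a}_{\widehat{\bm\alpha}}$ below by some $-C_\delta$ on $\g_2$. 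Thus on $\{\f_{\bm\alpha}\le C\}$ one has $\delta\,\mathcal H(\bm\rho)+\delta'\si\alpha_iM_2(\rho_i)\le C+C_\delta$, and, choosing in addition $\delta<\delta'\min_i\alpha_i$ and using $\mathcal H(\bm\rho)\ge-\si M_2(\rho_i)-C'$, this splits into uniform bounds on $M_2(\bm\rho)$ and on $\mathcal H(\bm\rho)$, hence on each $\inte\rho_i(\ln\rho_i)_+$. The moment bound gives tightness by Chebyshev, the bound on $\inte\rho_i(\ln\rho_i)_+$ gives uniform integrability by the de la Vall\'ee-Poussin criterion, and the Dunford--Pettis theorem then yields sequential precompactness of $\{\f_{\bm\alpha}\le C\}$ in the weak topology of $L^1(\rt)$.
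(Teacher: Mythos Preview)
The paper does not actually prove Proposition~\ref{propertyf}: it is stated as a recollection of known facts, with the sentence ``whose proof can be found in \cite{SW,KW}'' replacing any argument. So there is no in-paper proof to benchmark against; the relevant comparison is with those references, and your outline is precisely the strategy developed there (dilations and partial/nested concentrations for necessity, a concentration--compactness alternative for sufficiency, convexity/superlinearity plus the near/far splitting of the logarithm for lower semicontinuity, and the ``borrow $\delta$ from the entropy and from the moment'' trick combined with Dunford--Pettis for precompactness). Your sketch is correct and matches the cited literature; no gap to flag.
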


\begin{Rem} \label{rem2}
If $\bm{\beta}$ satisfies \eqref{beta}, we denote $C_{\mbox{\tiny{LHLS}}}(\bm{\beta},\bm{\alpha})
= \inf_{\bm{\rho}\in \g_2} \f_{\bm{\alpha}}(\bm{\rho}).$ Obviously,
if $\bm{\beta}$ is not critical (i.e., $\Lambda_{I}(\bm{\beta}) > 0$) then $C_{\mbox{\tiny{LHLS}}}(\bm{\beta},\bm{\alpha})
\rightarrow -\infty$ as $\bm{\alpha} \searrow \bm{0}.$
\end{Rem}

\section{Minimizing Movement Scheme}
Given two elements $\bm{\rho}, \bm{\eta} \in \mathcal{P}^{\bm{\beta}}_2(\rt),$ 
we define the distance between them as follows
\begin{align*}
 \bm{d}_{\mbox{{\tiny{W}}}}(\bm{\rho}, \bm{\eta}) 
 = \left[\si d_{\mbox{{\tiny{W}}}}^2(\rho_i, \eta_i)\right]^{\frac{1}{2}},
\end{align*}
where $d_{\mbox{{\tiny{W}}}}$ is defined by \eqref{wd}.
\subsection{Minimizing Movement Scheme (MM-scheme):} Given $\bm{\rho}^0 \in \g_2$ and a time step $\tau
\in (0,1),$ set $\bm{\rho}^0_{\tau} = \bm{\rho}^0$ and define recursively
\begin{align}\label{mms}
 \bm{\rho}^{k}_{\tau} \in \arg \min_{\bm{\rho}\in \g_2} \left\{\f(\bm{\rho}) + \frac{1}{2\tau}\bm{d}_{\mbox{{\tiny{W}}}}^2(\bm{\rho},
 \bm{\rho}^{k-1}_{\tau})\right\}, \ \ k \in \mathbb{N}.
\end{align}

The following proposition vindicates that the MM-scheme \eqref{mms} is well-defined.
\begin{Prop} 
\begin{itemize}
\item[(a)] Assume $\bm{\beta}$ satisfies \eqref{beta} and fix $\bm{\eta} \in \g_2$ and $\tau>0.$ 
 Then the functional 
 $\mathcal{G}_{\bm{\eta}}$ defined by
 \begin{align*}
  \mathcal{G}_{\bm{\eta}}(\bm{\rho}) := \f(\bm{\rho}) + \frac{1}{2\tau}\bm{d}_{\mbox{{\tiny{W}}}}^2(\bm{\rho},\bm{\eta}) 
 \end{align*}
is bounded from below on $\g_2.$ Moreover, $\mathcal{G}_{\bm{\eta}}$ is sequentially lower semicontinuous with respect to the 
weak topology in $L^1(\rt).$
\item[(b)] If $\bm{\beta}$ is sub-critical then all the sub-level sets $\{\mathcal{G}_{\bm{\eta}} \leq C\}$ are 
sequentially precompact 
with respect to the weak topology in $L^1(\rt).$ In particular, the minimization problem $\inf_{\bm{\rho \in \g_2}}
\mathcal{G}_{\bm{\eta}}(\bm{\rho})$ admits a solution.
\end{itemize}
\end{Prop}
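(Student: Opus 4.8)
The plan is to prove part (a) first and then deduce part (b) from the compactness properties of $\f_{\bm{\alpha}}$ recorded in Proposition \ref{propertyf}. For the boundedness from below in (a), the key observation is that the Wasserstein term can absorb the growth of the second moment. First I would fix $\bm\eta = (\eta_i)_{i=1}^n \in \g_2$ and write, for any $\bm\rho \in \g_2$ and any transport plan,
\begin{align*}
 \bm{d}_{\mbox{{\tiny{W}}}}^2(\bm{\rho},\bm{\eta}) \geq \frac12 M_2(\bm\rho) - C(\bm\eta),
\end{align*}
using $|x|^2 \leq 2|x-y|^2 + 2|y|^2$ together with mass conservation $\inte \rho_i = \beta_i$; here $C(\bm\eta)$ depends only on $M_2(\bm\eta)$ and $\bm\beta$. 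Consequently
\begin{align*}
 \mathcal{G}_{\bm\eta}(\bm\rho) \geq \f(\bm\rho) + \frac{1}{4\tau} M_2(\bm\rho) - \frac{C(\bm\eta)}{2\tau} = \f_{\bm\alpha}(\bm\rho) - \frac{C(\bm\eta)}{2\tau},
\end{align*}
with the uniform choice $\alpha_i = 1/(4\tau) > 0$. Since $\bm\beta$ satisfies \eqref{beta}, Proposition \ref{propertyf}(b) gives $\f_{\bm\alpha} \geq C_{\mbox{\tiny{LHLS}}}(\bm\beta,\bm\alpha) > -\infty$ on $\g_2$, hence $\mathcal{G}_{\bm\eta}$ is bounded below.

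For the lower semicontinuity in (a), I would take a sequence $\bm\rho^{(m)} \rightharpoonup \bm\rho$ weakly in $L^1(\rt)$. The free energy $\f$ is sequentially weakly lower semicontinuous by Proposition \ref{propertyf}(c). For the Wasserstein term, weak $L^1$ convergence of the densities implies narrow (weak-$*$) convergence of the associated measures, and the functional $\mu \mapsto d_{\mbox{{\tiny{W}}}}^2(\mu,\eta_i)$ is lower semicontinuous with respect to narrow convergence — this is the standard lower semicontinuity of the optimal transport cost (cf. \cite{Villani03,AGS}), applied here after the rescaling \eqref{wd}. Summing over $i$ and adding gives $\mathcal{G}_{\bm\eta}(\bm\rho) \leq \liminf_m \mathcal{G}_{\bm\eta}(\bm\rho^{(m)})$.

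For part (b), let $\bm\rho^{(m)}$ be a sequence in a sublevel set $\{\mathcal{G}_{\bm\eta} \leq C\}$. By the inequality established above, $\f_{\bm\alpha}(\bm\rho^{(m)}) \leq C + C(\bm\eta)/(2\tau)$ with $\alpha_i = 1/(4\tau)$; since $\bm\beta$ is sub-critical, Proposition \ref{propertyf}(d) asserts that such sublevel sets of $\f_{\bm\alpha}$ are sequentially precompact in the weak $L^1$ topology, so a subsequence of $\bm\rho^{(m)}$ converges weakly in $L^1$ to some $\bm\rho$. It remains to check $\bm\rho \in \g_2$: the entropy bound $\mathcal{H}(\bm\rho) < +\infty$ and the finiteness of $M_2(\bm\rho)$ and of $\inte \rho_i \ln(1+|x|^2)$ follow from the uniform bounds along the sequence together with Fatou-type lower semicontinuity (these are precisely the quantities controlled by $\f_{\bm\alpha} \leq C'$ under sub-criticality), and mass conservation passes to the limit since the densities are bounded in $L^1$ with fixed total mass on sets where tightness is guaranteed by the second-moment bound. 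Then the direct method applies: taking a minimizing sequence for $\inf_{\bm\rho \in \g_2} \mathcal{G}_{\bm\eta}$, it lies in a sublevel set, extract a weak $L^1$ limit, and conclude by the lower semicontinuity from part (a) that the limit is a minimizer.

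The main obstacle I anticipate is not any single estimate but the bookkeeping needed to confirm that the weak $L^1$ limit genuinely lies in $\g_2$ — in particular that the logarithmic moment $\inte \rho_i \ln(1+|x|^2)$ and the (signed) entropy remain finite and lower semicontinuous under weak $L^1$ convergence, rather than being lost in the limit. This is where sub-criticality is essential: only the strict positivity $\Lambda_J(\bm\beta) > 0$ for all $J$ (not merely \eqref{beta}) gives the coercivity in $\f_{\bm\alpha}$ that controls these quantities simultaneously, which is exactly what Proposition \ref{propertyf}(d) packages. Everything else is a routine application of the direct method in the calculus of variations.
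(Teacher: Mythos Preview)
Your proposal is correct and follows essentially the same route as the paper: the moment inequality $M_2(\bm\rho)\leq 2\bm{d}_{\mbox{{\tiny{W}}}}^2(\bm\rho,\bm\eta)+2M_2(\bm\eta)$ yields $\mathcal{G}_{\bm\eta}\geq \f_{\bm\alpha}-\frac{1}{2\tau}M_2(\bm\eta)$ with $\alpha_i=\frac{1}{4\tau}$, after which boundedness, lower semicontinuity and precompactness all reduce to Proposition~\ref{propertyf}. The paper handles your ``main obstacle'' (that the weak $L^1$ limit lies in $\g_2$) silently, since it is a direct consequence of the uniform second-moment and entropy bounds implicit in the sublevel-set inclusion $\{\mathcal{G}_{\bm\eta}\leq C\}\subset\{\f_{\bm\alpha}\leq C'\}$; your more explicit verification is fine but not an additional difficulty.
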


\begin{proof}
 \begin{itemize}
  \item[(a)] The proof of $(a)$ is a simple consequence of Proposition \ref{propertyf}(b) and the inequality
  \begin{align} \label{1}
   M_2(\bm{\rho}) \leq 2 \bm{d}_{\mbox{{\tiny{W}}}}^2(\bm{\rho}, \bm{\eta}) + 2M_2(\bm{\eta}).
  \end{align}
  which follows from 
 the triangle inequality.
As a consequence, we obtain $\mathcal{G}_{\bm{\eta}}(\bm{\rho}) \geq \f(\bm{\rho}) + \frac{1}{4\tau}M_2(\bm{\rho}) 
-\frac{1}{2\tau} M_2(\bm{\eta})$ is bounded from below on $\g_2.$ The 
sequentially lower semi-continuity of $\mathcal{G}_{\bm{\eta}}$ follows from the sequentially lower semi-continuity 
of $\f$ (see \cite{SW,KW}) and that of 
Wasserstein distance with respect to the weak topology of $L^1(\rt)$ (see \cite{JKO}).

\item[(b)] Any sub-level set $\{\mathcal{G}_{\bm{\eta}}\leq C\}$ 
is contained in the sub-level 
set $\{\f_{\bm{\frac{1}{4\tau}}} \leq  C + \frac{1}{2\tau}M_2(\bm{\eta})\}.$ Therefore, the conclusion of the 
proposition follows from Proposition \ref{propertyf}(d) together with the first part of the current
proposition.
\end{itemize}
\end{proof}

\begin{Rem}
The functional $\f$ is not convex, neither in the usual sense nor it is displacement convex
(in the sense of McCann \cite{McCann}).  As a consequence,  there may not be a unique minimizer 
in \eqref{mms}. We pick a minimizer recursively from the problem \eqref{mms}. Surprisingly, every 
choice gives rise to the same solution to \eqref{kss} in the limit $\tau \rightarrow 0$.  
Indeed, for the scalar case $n=1,$ Fern\'{a}ndez and Mischler \cite{FM} 
have obtained the uniqueness of the solutions  satisfying the free energy inequality, 
using an argument introduced by Ben-Artzi \cite{Artzi, ABrezis} for 2D viscous vortex model.
Their argument can be used to the system case and prove the uniqueness of solutions satisfying the
free energy inequality. 
\end{Rem}

\vspace{0.5 cm}

\section{A priori Estimates and the regularity of minimizers} 
\subsection{A priori estimates}

\begin{Lem} \label{apriori}
 For every $T \in (0,\infty)$ there exists a constant $\mathcal{C}_{ap}(T)$ 
 such that for each $\tau \in (0,1)$ and positive integers $k$ satisfying $k\tau \leq T$ there holds 
 \begin{align} \label{ap1}
  M_2(\bm{\rho}_{\tau}^k) + \frac{1}{\tau}\sum_{l=1}^{k} \bm{d}_{\mbox{{\tiny{W}}}}^2(\bm{\rho}_{\tau}^{l}, \bm{\rho}_{\tau}^{l-1})
 + \si \inte \rho_{\tau,i}^k|\ln \rho_{\tau,i}^k| \ dx\leq \mathcal{C}_{ap}(T).
 \end{align}
\end{Lem}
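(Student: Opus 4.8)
The plan is to run the classical energy (a priori) estimate for the minimizing movement scheme and then upgrade the resulting bounds using the \emph{strict} sub-criticality of $\bm{\beta}$. Fix $T>0$ and let $\tau\in(0,1)$, $k\in\mathbb{N}$ with $k\tau\le T$. Since $\rka\in\g_2$ is an admissible competitor in the problem \eqref{mms} defining $\rk$, one has $\f(\rk)+\frac{1}{2\tau}\bm{d}_{\mbox{{\tiny{W}}}}^2(\rk,\rka)\le\f(\rka)$; because $\bm{\rho}^0\in\g_2$ makes every $\f(\bm{\rho}_\tau^l)$ a finite real number, summing over $l=1,\dots,k$ and telescoping gives
\begin{align}\label{planenergy}
 \f(\rk)+\frac{1}{2\tau}\sum_{l=1}^{k}\bm{d}_{\mbox{{\tiny{W}}}}^2(\rl,\rla)\le\f(\bm{\rho}^0).
\end{align}

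Next I would control the second moment and the Wasserstein sum simultaneously. Setting $\bm{\delta}_0:=(\beta_1\delta_0,\dots,\beta_n\delta_0)$, the normalisation \eqref{wd} yields $\sqrt{M_2(\bm{\rho})}=\bm{d}_{\mbox{{\tiny{W}}}}(\bm{\rho},\bm{\delta}_0)$ for every $\bm{\rho}\in\mathcal{P}^{\bm{\beta}}_2(\rt)$, so by the triangle inequality, Cauchy--Schwarz and $k\tau\le T$, with $A:=\frac{1}{2\tau}\sum_{l=1}^{k}\bm{d}_{\mbox{{\tiny{W}}}}^2(\rl,\rla)$,
\begin{align*}
 \sqrt{M_2(\rk)}\le\sqrt{M_2(\bm{\rho}^0)}+\sum_{l=1}^{k}\bm{d}_{\mbox{{\tiny{W}}}}(\rl,\rla)\le\sqrt{M_2(\bm{\rho}^0)}+\sqrt{2TA},
\end{align*}
hence $M_2(\rk)\le 2M_2(\bm{\rho}^0)+4TA$. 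Choosing $\bm{\alpha}=\alpha\bm{1}$ with $\alpha=\tfrac{1}{8T}$, Proposition \ref{propertyf}(b) (sub-criticality implies \eqref{beta}) gives $\f(\rk)\ge\f_{\bm{\alpha}}(\rk)-\alpha M_2(\rk)\ge C_{\mbox{\tiny{LHLS}}}(\bm{\beta},\bm{\alpha})-\alpha M_2(\rk)$; inserting this and the moment bound into \eqref{planenergy} yields $A(1-4\alpha T)\le\f(\bm{\rho}^0)-C_{\mbox{\tiny{LHLS}}}(\bm{\beta},\bm{\alpha})+2\alpha M_2(\bm{\rho}^0)$, and since $1-4\alpha T=\tfrac12$ this bounds $A$, and therefore also $M_2(\rk)$ and $\tfrac{1}{\tau}\sum_{l=1}^{k}\bm{d}_{\mbox{{\tiny{W}}}}^2(\rl,\rla)=2A$, by a constant depending only on $T,\bm{\rho}^0,\bm{\beta}$ and $(a_{ij})$.

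For the entropy term write $\si\inte\rki|\ln\rki|=\mathcal{H}(\rk)+2\si\inte\rki(\ln\rki)_-$. The negative part is handled by the standard Gaussian comparison: splitting $\{\rki<1\}$ according to whether $\rki$ lies below or above $e^{-|x|^2/2}$ and using the monotonicity of $t\mapsto-t\ln t$ near $0$ gives $\si\inte\rki(\ln\rki)_-\le\tfrac12 M_2(\rk)+Cn$, which is finite by the previous step. To bound $\mathcal{H}(\rk)$ from above I would exploit that there are only finitely many $\emptyset\ne J\subset I$ and $\Lambda_J(\bm{\beta})>0$ for each of them, so there is $c\in(0,1)$ for which $\bm{\beta}$ still satisfies \eqref{beta} relative to the amplified matrix $(a_{ij}/(1-c))$; applying Proposition \ref{propertyf}(b) to $\mathcal{H}(\bm{\rho})+\tfrac{1}{1-c}\si\sj\tfrac{a_{ij}}{4\pi}\inte\inte\rho_i\ln|x-y|\rho_j\,dxdy+\alpha\si M_2(\rho_i)$, multiplying the lower bound by $(1-c)$ and rearranging yields $c\,\mathcal{H}(\rk)\le\f(\rk)-(1-c)C'+(1-c)\alpha M_2(\rk)$ for a constant $C'=C'(T,\bm{\beta},(a_{ij}))$. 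Combined with $\f(\rk)\le\f(\bm{\rho}^0)$ from \eqref{planenergy} and the bound on $M_2(\rk)$, this bounds $\mathcal{H}(\rk)$ from above, and adding twice the negative-part estimate proves \eqref{ap1}.

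The telescoping and the simultaneous control of $M_2(\rk)$ and the Wasserstein sum are routine; the delicate point is the upper bound on the entropy $\mathcal{H}(\rk)$. Since \eqref{planenergy} only bounds $\f(\rk)$ from \emph{above}, one cannot extract an entropy bound from a mere lower bound on $\f$ (as one would for the sub-level sets of $\f$, which in addition require $\bm{\beta}$ to be critical); instead one must ``spend'' a fixed fraction $c$ of the entropy to absorb the logarithmically singular interaction, and this is exactly where the slack in the strict inequalities $\Lambda_J(\bm{\beta})>0$ enters. If $\bm{\beta}$ were merely critical this step would break down.
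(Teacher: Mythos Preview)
Your argument is correct and follows essentially the same route as the paper: the telescoping energy inequality, the moment control via the triangle inequality and Cauchy--Schwarz, the absorption of $M_2(\rk)$ using the lower bound on $\f_{\bm{\alpha}}$ from Proposition~\ref{propertyf}(b), and finally the Carleman/Gaussian estimate to pass from $\mathcal{H}(\rk)$ to $\si\inte\rki|\ln\rki|$. The only noticeable difference is that for the upper bound on $\mathcal{H}(\rk)$ the paper simply quotes \cite[Theorem~4.1]{KW}, whereas you reprove it in-line by amplifying the interaction matrix to $(a_{ij}/(1-c))$ and reapplying Proposition~\ref{propertyf}(b); this is exactly the mechanism behind the cited result, so your version is a self-contained rendering of the same step rather than a different approach.
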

\begin{proof}
 For every $l \in \{1,\ldots,k\},$ the minimizing property of $\rl$ gives
 \begin{align*}
  \f(\rl) + \frac{1}{2\tau}\bm{d}_{\mbox{{\tiny{W}}}}^2(\rl, \bm{\rho}^{l-1}_{\tau}) \leq \f(\bm{\rho}^{l-1}_{\tau}). 
 \end{align*}
Summing over $l \in \{1,\ldots,k\}$ we obtain
\begin{align} \label{ap2}
 \f(\rk) + \frac{1}{2\tau}\sum_{l=1}^{k} \bm{d}_{\mbox{{\tiny{W}}}}^2(\bm{\rho}_{\tau}^{l}, \bm{\rho}_{\tau}^{l-1}) \leq \f(\bm{\rho}^0).
\end{align}
Choose $\alpha >0$ such that $8\alpha T < 1.$ Applying \eqref{1}, triangle inequality and Cauchy-Schwartz inequality
we get 
\begin{align} \label{ap3}
 \alpha M_2(\rk) &\leq 2\alpha\bm{d}_{\mbox{{\tiny{W}}}}^2(\rk,\bm{\rho}^0) + 2\alpha M_2(\bm{\rho}^0) 
 \leq 2\alpha \left(\sum_{l=1}^k \bm{d}_{\mbox{{\tiny{W}}}}(\rl, \bm{\rho}^{l-1}_{\tau})\right)^2 + 2\alpha M_2(\bm{\rho}^0) \notag \\
 &\leq 2\alpha k \sum_{l=1}^k \bm{d}_{\mbox{{\tiny{W}}}}^2(\rl, \bm{\rho}^{l-1}_{\tau}) + 2\alpha M_2(\bm{\rho}^0) 
 \leq \frac{2\alpha T}{\tau} \sum_{l=1}^k \bm{d}_{\mbox{{\tiny{W}}}}^2(\rl, \bm{\rho}^{l-1}_{\tau}) + 2\alpha M_2(\bm{\rho}^0) \notag \\ 
 &\leq \frac{1}{4\tau} \sum_{l=1}^k \bm{d}_{\mbox{{\tiny{W}}}}^2(\rl, \bm{\rho}^{l-1}_{\tau}) + 2\alpha M_2(\bm{\rho}^0).
\end{align}
Adding $\alpha M_2(\rk)$ on both sides of \eqref{ap2}, using \eqref{ap3} and Proposition \ref{propertyf} (b)
(and Remark \ref{rem2}) we obtain
\begin{align*}
 \f(\bm{\rho}^0) + \frac{1}{4\tau} \sum_{l=1}^k \bm{d}_{\mbox{{\tiny{W}}}}^2(\rl, \bm{\rho}^{l-1}_{\tau}) + 2\alpha M_2(\bm{\rho}^0)
 &\geq \f(\rk) + \alpha M_2(\rk) + \frac{1}{2\tau}\sum_{l=1}^{k} \bm{d}_{\mbox{{\tiny{W}}}}^2(\bm{\rho}_{\tau}^{l}, \bm{\rho}_{\tau}^{l-1}) \\
 &\geq C_{\mbox{\tiny{LHLS}}}(\bm{\beta},\bm{\alpha}) + \frac{1}{2\tau}\sum_{l=1}^{k} \bm{d}_{\mbox{{\tiny{W}}}}^2(\bm{\rho}_{\tau}^{l}, \bm{\rho}_{\tau}^{l-1}),
\end{align*}
which in turn gives 
\begin{align} \label{ap4}
 \frac{1}{\tau}\sum_{l=1}^{k} \bm{d}_{\mbox{{\tiny{W}}}}^2(\bm{\rho}_{\tau}^{l}, \bm{\rho}_{\tau}^{l-1}) \leq 
 4(\f(\bm{\rho}^0) + 2\alpha M(\bm{\rho}^0) - C_{\mbox{\tiny{LHLS}}}(\bm{\beta}, \bm{\alpha})) =: C_1(T).
\end{align}
Again using \eqref{ap4} in \eqref{ap3} we get
\begin{align} \label{ap5}
 M(\rk) \leq \frac{1}{\alpha}(\f(\bm{\rho}^0) + 4\alpha M(\bm{\rho}^0) - C_{\mbox{{\tiny{LHLS}}}}(\bm{\beta}, \bm{\alpha}))=:C_2(T).
\end{align}
Finally, since $\f(\rk) \leq \f(\bm{\rho}^0)$ because of \eqref{ap2} and 
$\bm{\beta}$ is sub-critical  we obtain an upper bound on the entropy 
(see \cite[Theorem $4.1$]{KW})
\begin{align} \label{ap6}
 \si \inte \rho_{\tau,i}^k \ln \rho_{\tau,i}^k \leq C_3.
\end{align}
Hence by Lemma \ref{ce}(b) (see appendix) and the bounds \eqref{ap5}, \eqref{ap6} we get  
\begin{align} \label{ap7}
 \si \inte \rho_{\tau,i}^k |\ln \rho_{\tau,i}^k| \leq C_4(T).
\end{align}
We conclude the proof with 
$\mathcal{C}_{ap}(T) = C_1(T) + C_2(T) + C_4(T).$
\end{proof}

\subsection{Regularity of minimizers}
We indicated in the introduction that in order to derive the Euler-Lagrange equation satisfied by 
the minimizers in MM-scheme, we need 
additional regularities on the minimizers. This is the goal of this subsection. 
We will utilize the flow interchange technique introduced by 
Matthes-McCann and Savar\'{e} \cite{MMS}. Before stating our regularity results
let us first review this technique very briefly in our setting.

\subsubsection{Matthes-McCann-Savar\'{e} flow interchange technique}
Let \ $\mathcal{N} : \mathcal{P}^{\bm{\beta}}(\rt) \rightarrow \mathbb{R}\cup \{+\infty\}$ be a proper lower semi-continuous functional
and let $\mathcal{D}(\mathcal{N}) :=\{\bm{\mu} \in \mathcal{P}^{\bm{\beta}}(\rt)| \ \mathcal{N}(\bm{\mu}) 
< +\infty\}$ be the domain of $\mathcal{N}.$
Further assume that $\mathcal{N}$ generates a continuous semigroup $\mathcal{S}^{\mathcal{N}}_t :
\mathcal{D}(\mathcal{N}) \rightarrow \mathcal{D}(\mathcal{N}),$ 
satisfying the evolution variational identity {\bf(EVI)}
\begin{align}\label{evi}
 \frac{1}{2} \frac{\bm{d}_{\mbox{{\tiny{W}}}}^2(\mathcal{S}^{\mathcal{N}}_t(\bm{\mu}), \bm{\nu}) - \bm{d}_{\mbox{{\tiny{W}}}}^2(\bm{\mu},\bm{\nu})}{t}
\leq \mathcal{N}(\bm{\nu}) - \mathcal{N}(\mathcal{S}^{\mathcal{N}}_t(\bm{\mu})), \ \ \forall \ \bm{\mu}, \bm{\nu}
\in \mathcal{D}(\mathcal{N}).
\end{align}
Assume for simplicity $\g_2 \subset \mathcal{D}(\mathcal{N})$ and $\g_2$ is an invariant subset under the 
flow $\mathcal{S}^{\mathcal{N}}_t$ i.e., $\mathcal{S}^{\mathcal{N}}_t(\g_2)
\subset \g_2.$ If $\bm{\mu} \in \g_2$ we define
the  dissipation of $\f$ along the flow $(\mathcal{S}^{\mathcal{N}}_t)_{t \geq 0}$ of $\mathcal{N}$ by
\begin{align*}
 \bm{\nabla}^{\mathcal{N}}\f(\bm{\mu}):=\limsup_{t\searrow 0} \frac{\f(\bm{\mu}) - \f(\mathcal{S}^{\mathcal{N}}_t(\bm{\mu}))}{t}.
\end{align*}
By the minimizing property of $\rk$  (see \eqref{mms})
\begin{align} \label{1a}
 \f(\rk) + \frac{1}{2\tau}\bm{d}_{\mbox{{\tiny{W}}}}^2(\rk, \rka) \leq \f(\bm{\rho}) + \frac{1}{2\tau}\bm{d}_{\mbox{{\tiny{W}}}}^2(\bm{\rho}, \rka),
\end{align}
for all $\bm{\rho} \in \g_2.$ Now setting $\bm{\rho} = \mathcal{S}^{\mathcal{N}}_t(\rk)$ in \eqref{1a} for $t>0,$
dividing by $t$ and using \eqref{evi} we get
\begin{align} \label{aq1}
 \frac{\f(\rk) - \f(\mathcal{S}^{\mathcal{N}}_t(\rk))}{t} \leq \frac{\mathcal{N}(\rka) - \mathcal{N}(
 \mathcal{S}^{\mathcal{N}}_t(\rk))}{\tau}.
\end{align}
Since $\mathcal{N}$ is lower semi-continuous, passing to the limit as $t \searrow 0$ we obtain
\begin{align*}
 \bm{\nabla}^{\mathcal{N}}\f(\rk) \leq \frac{\mathcal{N}(\rka) - \mathcal{N}(\rk)}{\tau}.
\end{align*}
In addition, if the quantity $\bm{\nabla}^{\mathcal{N}}\f \geq 0,$ or, at least behaves nicely  
then we could possibly get adequate estimates on the minimizers $\rk.$

\begin{Rem}
 The dissipation of $\f$ along the flow $\mathcal{S}^{\mathcal{N}}_t$ was originally 
 denoted by $\bm{D}^{\mathcal{N}}$ in the article \cite{MMS}. In order to avoid confusion 
 with the domain of $\mathcal{N}$ we choose to denote it 
 by $\bm{\nabla}^{\mathcal{N}}.$
\end{Rem}

In our case we use the entropy functional 
\begin{align*}
 \mathcal{H}(\bm{\rho}) := \si \mathcal{H}(\rho_i) = \si \inte \rho_i \ln \rho_i \ dx,
\end{align*}
if $\bm{\rho}$ is absolutely continuous with respect to the Lebesgue measure and $+\infty$ everywhere else.
The functional $\mathcal{H}$ is a particular example of the class known as {\it displacement convex entropy}, 
which guarantees the existence of a continuous semigroup \cite{AGS}. 
However,  we don't have $\bm{\nabla}^{\mathcal{H}}\f \geq 0$, but we can control the negative part of 
$\bm{\nabla}^{\mathcal{H}}\f$  and achieve higher regularity of the minimizers.
Below we recall some few well-known facts on the 
{\it displacement convex entropy} in $\rt.$

\subsubsection{Displacement convex entropy}
Let $U: [0,+\infty) \rightarrow \mathbb{R}$ be a convex function satisfying
\begin{itemize}
 \item $U(0)=0,$ $U$ is continuous at $0$ and $U \in C^1(0,\infty),$
 \item $\lim_{t\rightarrow +\infty}\frac{U(t)}{t} = +\infty$ and $\lim_{t \rightarrow 0+} \frac{U(t)}{t^{\alpha}} > -\infty$
 for some $\alpha > \frac{1}{2},$
 \item $t\mapsto t^2U(t^{-2})$ is convex and decreasing in $(0,\infty).$
\end{itemize}
Define the functional $\mathcal{U}: \mathcal{P}_2(\rt) \rightarrow \mathbb{R} \cup \{+\infty\}$ by
\begin{align*}
 \mathcal{U}(\mu) = 
 \begin{cases}
  \inte U(\rho) \ dx, \ \ if \ \mu \in \mathcal{P}_{ac,2}(\rt), \ \mu = \rho dx, \\
  +\infty, \ \ \ \ \ \ \ \ \ \  \ \ elsewhere.
 \end{cases}
\end{align*}
The domain of $\mathcal{U}$ denoted by $\mathcal{D}(\mathcal{U})$ is the set of all densities $\rho$ such that $\mathcal{U}(\rho) < +\infty.$
Such a functional is called a displacement convex entropy with density function $U.$ 
It is well known that a displacement convex 
entropy $\mathcal{U}$ generates a continuous semigroup $\mathcal{S}^{\mathcal{U}}_t : \mathcal{D}(\mathcal{U})
\rightarrow \mathcal{D}(\mathcal{U})$ satisfying the Evolution Variational Identity
 \begin{align*}
  \frac{1}{2} \frac{d_{\mbox{{\tiny{W}}}}^2(\mathcal{S}^{\mathcal{U}}_t(\rho), \eta) - d_{\mbox{{\tiny{W}}}}^2(\rho,\eta)}{t}
\leq \mathcal{U}(\eta) - \mathcal{U}(\mathcal{S}^{\mathcal{U}}_t(\rho)), \ \ \forall \ \, \rho, \eta
\in \mathcal{D}(\mathcal{U}),
 \end{align*}
and $w := \mathcal{S}^{\mathcal{U}}_t(\rho)$ is the unique distributional solution to the Cauchy problem 
\begin{align*}
 \partial_t w = \Delta (L_U(w)), \ \ \ w(0) = \rho,
\end{align*}
where $L_U(t) = tU^{\prime}(t) - U(t).$ We refer to \cite[Theorem $11.2.5$]{AGS} for a proof of the 
claims described above.

It is easy to see that $H(t) := t\ln t$ satisfies above all criterion of a displacement convex entropy. 
Also, note that for our purpose we need to apply the above results to the densities $\rho$ having mass $\beta>0.$ 
Let $\rho, \eta$ be two densities with mass $\beta$ having finite entropies
and finite second moments.
Then applying EVI to the normalized densities gives 
\begin{align*}
  \frac{1}{2} \frac{d_{\mbox{{\tiny{W}}}}^2(\mathcal{S}^{\mathcal{H}}_t(\rho), \eta) - d_{\mbox{{\tiny{W}}}}^2(\rho,\eta)}{t}
  &= \beta \ \frac{1}{2} \frac{d_{\mbox{{\tiny{W}}}}^2(\mathcal{S}^{\mathcal{H}}_t(\frac{\rho}{\beta}), \frac{\eta}{\beta}) 
  - d_{\mbox{{\tiny{W}}}}^2(\frac{\rho}{\beta},\frac{\eta}{\beta})}{t}\\
&\leq \beta\left[\mathcal{H}\left(\frac{\eta}{\beta}\right) - \mathcal{H}\left(\mathcal{S}^{\mathcal{H}}_t
\left(\frac{\rho}{\beta}\right)\right)\right]
=\mathcal{H}(\eta) - \mathcal{H}(\mathcal{S}^{\mathcal{H}}_t(\rho)),
\end{align*}
since $L_{H}(t) = t,$ and $w := \mathcal{S}^{\mathcal{H}}_t(\rho)$ is a distributional solution to 
\begin{align*}
 \partial_t w = \Delta w, \ \ \ w(0) = \rho.
\end{align*}
Moreover, note that  $M_2(\mathcal{S}^{\mathcal{H}}_t(\rho)) < +\infty$
for all $t$ since $M_2(\rho) < +\infty.$
\subsubsection{Regularity results}
\begin{Lem}\label{regularity}
Let $\tau \in (0,1)$ and let $\rk$ be a sequence obtained using the MM-scheme \eqref{mms} 
satisfying
\begin{align}\label{theta}
 \si\inte \rki |\ln \rki| \ dx + M_2(\rk) \leq \Theta
\end{align}
for some constant $\Theta>0.$ Then $\rki \in W^{1,1}(\rt), \frac{\nabla \rki}{\rki} \in L^2(\rt,\rki)$ for all 
$i \in I.$

Moreover, there exists a constant $C(\Theta)$ such that 
\begin{align} \label{w11 estimate}
 \si \inte \left|\frac{\nabla \rki(x)}{\rki(x)}\right|^2 \rki(x) \ dx \leq \frac{2}{\tau}
 \left[\mathcal{H}(\rka) - \mathcal{H}(\rk)\right] + C(\Theta).
\end{align}
\end{Lem}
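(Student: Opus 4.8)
The plan is to apply the Matthes--McCann--Savar\'e flow interchange machinery of the previous subsection with $\mathcal{N} = \mathcal{H}$, the entropy. Fix $i$ and run each component $\rki$ through the heat flow $w_i(t) := \mathcal{S}^{\mathcal{H}}_t(\rki)$, setting $\bm{w}(t) = (w_1(t),\ldots,w_n(t))$; by the remarks above this is a distributional solution of $\partial_t w_i = \Delta w_i$, it stays in $\g_2$ (finite entropy, finite second moment are preserved), and it satisfies the {\bf (EVI)} \eqref{evi} for $\bm{d}_{\mbox{{\tiny{W}}}}^2$ componentwise, hence in the product. Plugging $\bm{\rho} = \bm{w}(t)$ into the minimality inequality \eqref{1a} and using {\bf (EVI)} exactly as in the derivation of \eqref{aq1} gives
\begin{align*}
 \frac{\f(\rk) - \f(\bm{w}(t))}{t} \leq \frac{\mathcal{H}(\rka) - \mathcal{H}(\bm{w}(t))}{\tau},
\end{align*}
so that $\bm{\nabla}^{\mathcal{H}}\f(\rk) \leq \tau^{-1}(\mathcal{H}(\rka) - \mathcal{H}(\rk))$ after letting $t \searrow 0$ and using lower semicontinuity of $\mathcal{H}$.

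The heart of the matter is to compute $\bm{\nabla}^{\mathcal{H}}\f(\rk)$ from below and recognize the Fisher information. Write $\f = \mathcal{H} + \mathcal{W}$ where $\mathcal{W}(\bm{\rho}) = \si\sj \frac{a_{ij}}{4\pi}\inte\inte \rho_i(x)\ln|x-y|\rho_j(y)\,dxdy$ is the interaction part. Along the heat flow, the classical computation (de Bruijn's identity) gives $\frac{d}{dt}\mathcal{H}(w_i(t))\big|_{t=0^+} = -\inte \frac{|\nabla \rki|^2}{\rki}\,dx$, so $-\frac{d}{dt}\mathcal{H}(\bm{w}(t))\big|_{t=0^+} = \si\inte |\nabla\rki/\rki|^2\rki\,dx$, the Fisher information $\mathcal{I}(\rk)$. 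For the interaction term, differentiating and integrating by parts, $\frac{d}{dt}\mathcal{W}(\bm{w}(t))\big|_{t=0^+} = \si\sj \frac{a_{ij}}{2\pi}\inte\inte \Delta_x\rki(x)\ln|x-y|\rkj(y)\,dxdy = -\si\sj a_{ij}\inte \rki(x)\rho_{\tau,j}^k(x)\,dx$, using $-\Delta_x(-\frac{1}{2\pi}\ln|x-y|) = \delta$; symmetry of $(a_{ij})$ lets us reorganize the double sum. One then bounds this interaction contribution to $\bm{\nabla}^{\mathcal{H}}\f$ in absolute value by a constant depending only on $\Theta$: indeed $\si\sj a_{ij}\inte \rki\rho_{\tau,j}^k \le C\si\inte |\rki|^2$, and $\si\inte|\rki|^2$ is controlled by $\mathcal{H}_+(\rk)$ together with $M_2(\rk)$ — both bounded by $\Theta$ via \eqref{theta} — through the standard interpolation/concentration estimates (a Carleman-type or Gagliardo--Nirenberg bound); this simultaneously gives $\rki \in L^2(\rt)$ and, combined with finiteness of the Fisher information, $\rki \in W^{1,1}(\rt)$ by writing $\nabla\rki = 2\sqrt{\rki}\,\nabla\sqrt{\rki}$ and Cauchy--Schwarz. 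Putting these together,
\begin{align*}
 \mathcal{I}(\rk) = \bm{\nabla}^{\mathcal{H}}\f(\rk) + \frac{d}{dt}\mathcal{W}(\bm{w}(t))\Big|_{t=0^+} \le \frac{2}{\tau}\left[\mathcal{H}(\rka) - \mathcal{H}(\rk)\right] + C(\Theta),
\end{align*}
which is \eqref{w11 estimate} (the factor $2$ absorbs the $\frac12$ in {\bf (EVI)}).

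The main obstacle is \emph{rigor in the differentiation step}: the formal de Bruijn identity and the interaction-term derivative must be justified for merely $L\log L$, finite-second-moment data without assuming a priori that the Fisher information is finite. The standard fix, which I would follow, is to work with the difference quotient and lower semicontinuity rather than a pointwise derivative: the flow interchange inequality already gives the clean upper bound $\limsup_{t\searrow 0}\frac{\f(\rk)-\f(\bm{w}(t))}{t} \le \frac{1}{\tau}(\mathcal{H}(\rka)-\mathcal{H}(\rk))$, and one shows $\liminf_{t\searrow 0}\frac{\mathcal{H}(\rk)-\mathcal{H}(\bm{w}(t))}{t} \ge \mathcal{I}(\rk)$ (this inequality, valid a priori with possibly infinite right side, is the content of the entropy--Fisher information relation for the heat semigroup and follows from lower semicontinuity of $\mathcal{I}$ along the regularized flow), while the interaction piece is continuous in $t$ and its difference quotient converges because $w_i(t) \to \rki$ strongly in $L^1$ with the logarithmic kernel estimates of Section~2 controlling $\mathcal{W}$. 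One also has to check the flow does not increase $\mathcal{H}_+ + M_2$ beyond a $\Theta$-dependent bound on a short time interval so that all the $\Theta$-estimates apply uniformly for small $t$; this is immediate since $\mathcal{H}$ decreases and $M_2(w_i(t)) = M_2(\rki) + 4t\beta_i$ grows linearly. Assembling these pieces yields both the claimed regularity $\rki \in W^{1,1}(\rt)$, $\nabla\rki/\rki \in L^2(\rt,\rki)$, and the quantitative bound \eqref{w11 estimate}.
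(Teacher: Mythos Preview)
The overall strategy---flow interchange with the heat semigroup as the auxiliary flow---is correct and matches the paper. But there is a genuine gap at the key step where you bound the interaction contribution.

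You write that $\si\sj a_{ij}\inte \rki\rkj \le C\si\inte|\rki|^2$, and then claim that $\si\inte|\rki|^2$ is controlled by the entropy and second moment (both $\le\Theta$) via ``a Carleman-type or Gagliardo--Nirenberg bound''. This is false: $L\log L$ with finite second moment does \emph{not} embed into $L^2(\rt)$. A density like $c|x|^{-2}|\ln|x||^{-3}$ near the origin has finite entropy and finite second moment but infinite $L^2$ norm. So you cannot bound the interaction term by a constant $C(\Theta)$; the bound you need simply does not exist, and the argument is circular---$L^2$ regularity of $\rki$ is one of the things the lemma is trying to establish.

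The paper handles this differently, and the mechanism is essential. One works at strictly positive time $t>0$ along the heat flow, where $w_{t,i}$ is smooth, and computes
\[
\frac{d}{dt}\f(\bm{w}_t)= -\si\inte\frac{|\nabla w_{t,i}|^2}{w_{t,i}}\,dx + \si\sj a_{ij}\inte w_{t,i}w_{t,j}\,dx.
\]
The interaction term is then \emph{absorbed} into the Fisher information using the Biler--Hebisch--Nadzieja inequality (Lemma~\ref{ce}(a)):
\[
\|w_{t,i}\|_{L^2}^2 \le \epsilon\,\|w_{t,i}\ln w_{t,i}\|_{L^1}\inte\frac{|\nabla w_{t,i}|^2}{w_{t,i}}\,dx + L_\epsilon\|w_{t,i}\|_{L^1}.
\]
Since entropy is nonincreasing along the heat flow, $\|w_{t,i}\ln w_{t,i}\|_{L^1}\le\Theta$, and choosing $\epsilon$ small yields $\frac{d}{dt}\f(\bm{w}_t)\le -\tfrac12\,\mathcal{I}(\bm{w}_t)+C_1(\Theta)$. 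The factor $2$ in \eqref{w11 estimate} comes from this $\tfrac12$, not from the $\tfrac12$ in {\bf(EVI)} as you suggest. One then uses the mean value theorem to find $r(m)\in(0,t(m))$ with $\tfrac12\mathcal{I}(\bm{w}_{r(m)})-C_1(\Theta)\le\frac{1}{\tau}[\mathcal{H}(\rka)-\mathcal{H}(\bm{w}_{t(m)})]$, and finally passes to the limit $r(m)\to 0$ by weak $L^2$ compactness and the vector-field compactness Proposition~\ref{cvf} to recover the Fisher bound and $W^{1,1}$ regularity for $\rk$ itself.

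In short: the interaction term is not small, it is merely dominated by half of the Fisher information; and this absorption must be done along the smoothed flow at $t>0$, with the conclusion transferred back to $\rk$ by lower semicontinuity.
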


\begin{proof}
 For each $i \in \{1,\ldots,n\},$ let $\mathcal{S}_t^{\mathcal{H}} : \mathcal{D}(\mathcal{H}) \rightarrow 
 \mathcal{D}(\mathcal{H})$ be the continuous semigroup generated by $\mathcal{H}$ with respect to the 
 Wasserstein distance in $\mathcal{P}^{\beta_i}_2(\rt)$. For simplicity 
 of notations we define $w_{t,i} = \mathcal{S}^{\mathcal{H}}_t(\rki)$ and $\bm{w}_t = 
 (w_{t,1},\ldots, w_{t,n}).$ Then $w_{t,i}$ satisfies
 \begin{align*}
  \partial_t w_{t,i} = \Delta w_{t,i}, \ \ w_{0,i} = \rki, \ \ for \ all \ i=1,\ldots,n.
 \end{align*}
Moreover, since $\rk \in \g_2$ we get the following outcomes: for all $i \in \{1,\ldots,n\}$
\begin{itemize}
 \item[(a)] for each $T>0$ there exists a constant $C(T)>0$ such that $M_2(w_{t,i}) \leq C(T),$ 
 for all $t \leq T,$
 \item[(b)] $\lim_{t \rightarrow 0+}||w_{t,i} - \rki||_{L^1(\rt)} = 0,$
 \item[(c)] $\lim_{t\rightarrow 0+}\inte w_{t,i} \ln w_{t,i} \ dx = \inte \rki \ln \rki \ dx.$
 \item[(d)] $\frac{d}{dt} \inte w_{t,i} \ln w_{t,i} \ dx = 
 - \inte \left|\frac{\nabla w_{t,i}}{w_{t,i}}\right|^2 w_{t,i} \ dx <0,$ for $t>0.$
\end{itemize}
  For the convenience of the reader we prove  $(c)$.
  Since $H$ is convex by strong $L^1$-convergence $(b)$ ($L^1$-weak convergence is enough though) we have 
 \begin{align*}
  \inte \rki \ln \rki \ dx \leq \liminf_{t \rightarrow 0+} \inte w_{t,i} \ln w_{t,i} \ dx.
 \end{align*}
 On the other hand, by uniqueness of the solutions to the heat equation, $w_{t,i}$ is the convolution (with
 respect to the space variable) of 
 $\rki$ with the heat kernel $G_t(x) :=\frac{1}{\sqrt{4\pi t}}e^{-\frac{|x|^2}{4t}}.$ Again by convexity of $H$
 and Jensen's inequality
 \begin{align*}
 H(w_{t,i})(x) = H \left(\inte \rki(x-y)G_t(y) \ dy \right) \leq \inte H(\rki)(x-y)G_t(y) \ dy.
 \end{align*}
Integrating with respect to $x$ and taking $\limsup$ as $t \rightarrow 0+$ we obtain
 \begin{align*}
  \limsup_{t \rightarrow 0+} \inte w_{t,i} \ln w_{t,i} \ dx \leq \inte \rki \ln \rki \ dx.
 \end{align*}

\noindent
{\bf Step 1.} $t\mapsto \f(\bm{w}_t)$ is continuous at $0$ and differentiable in $(0,\infty).$

\vspace{0.1 cm}

\noindent
{\it Proof of step 1.} The differentiability property of the functional follows from the smoothness of $w_{t}$ as
it is being the convolution of $\rki$ with the heat kernel 
$G_t(x).$ So, we only need to check the continuity at $0.$ In fact, by $(a),(b)$ and $(c)$
we can argue as in \cite[Lemma $3.1$]{JKO2} (see also \cite{KW}) to obtain
\begin{align*}
 \lim_{t\rightarrow 0+} \inte \inte w_{t,i}(x)\ln|x-y|w_{t,j}(y)\ dxdy = \inte \inte \rki(x)\ln|x-y|
 \rho_{\tau,j}^{k}(y) \ dxdy,
\end{align*}
and hence $\lim_{t\rightarrow 0+} \f(\bm{w}_t) = \f(\rk).$

Define $u_{t,i}(x) = -\frac{1}{2\pi}\inte \ln|x-y|w_{t,i}(y)\ dy,$ then $-\Delta u_{t,i} = w_{t,i}.$
Owing to the smoothness of $\bm{w}_t$ and using the symmetry of $(a_{ij})$ we get for $t>0$
\begin{align}\label{ap8}
 \frac{d}{dt}\f(\bm{w}_t) = &\si\inte (1 + \ln w_{t,i})\partial_t w_{t,i}\ dx \notag\\
 &+ \si\sj \frac{a_{ij}}{2\pi} \inte \inte \partial_t w_{t,i}(x)\ln|x-y| w_{t,j}(y) \ dxdy \notag\\
 =& - \si \inte \nabla (\ln w_{t,i})\cdot \nabla w_{t,i} + \si \sj a_{ij}\inte \nabla w_{t,i}\cdot \nabla u_{t,j} \notag \\
 =& - \si \inte \left|\frac{\nabla w_{t,i}}{w_{t,i}}\right|^2w_{t,i} \ dx + \si\sj a_{ij} \inte w_{t,i}w_{t,j} \ dx. 
\end{align}

\noindent
{\bf Step 2.} There exists a constant $C_1(\Theta)$  such that 
\begin{align}\label{ap9}
 \frac{d}{dt}\f(\bm{w}_t) \leq -\frac{1}{2} \si \inte \left|\frac{\nabla w_{t,i}}{w_{t,i}}\right|^2w_{t,i} \ dx + C_1(\Theta),
 \ \ for \ all \ t \in (0,1).
\end{align}
\noindent
{\it Proof of step 2.} By Cauchy-Schwartz inequality, Lemma \ref{ce}(a) (see appendix) and $(d)$
\begin{align*}
 \si\sj a_{ij} \inte w_{t,i}w_{t,j} \ dx &\leq C_2\si \inte (w_{t,i})^2 \ dx \\
 &\leq C_2 \left[\epsilon\si \left|\left| \frac{\nabla w_{t,i}}{w_{t,i}}\right|\right|_{L^2(\rt,w_{t,i})}^2 ||
   w_{t,i}\ln w_{t,i}||_{L^1(\rt)} + L_{\epsilon}\right] \\
   &\leq C_3(\Theta)\epsilon \si \inte \left|\frac{\nabla w_{t,i}}{w_{t,i}}\right|^2w_{t,i} \ dx + C_4(\epsilon).
\end{align*}
Choosing $\epsilon >0$ small such that $C_3(\Theta)\epsilon < \frac{1}{2}$ and using \eqref{ap8}
we obtain \eqref{ap9}.

\vspace{0.1 cm}

\noindent
{\bf Step 3:} $L^2$-regularity of $\rk.$

\vspace{0.1 cm}

\noindent
{\it Proof of step 3.}
Let $t(m)$ be a sequence such that $t(m)\searrow 0$ as $m\rightarrow \infty.$ Since $t\mapsto \f(w_t)$ is continuous on $[0,\infty)$
and differentiable in $(0,\infty),$ by Lagrange mean value theorem there exists $r(m)\in (0, t(m))$ such that 
\begin{align} \label{ap10}
 \frac{\f(\rk) - \f(\bm{w}_{t(m)})}{t(m)} = -\frac{d}{dt}\f(\bm{w}_t)\Big|_{t=r(m)}.
\end{align}

Combining \eqref{aq1} and \eqref{ap10} we get
\begin{align} \label{ab} 
-\frac{d}{dt}\f(\bm{w}_t)\Big|_{t=r(m)} = \frac{\f(\rk) - \f(\bm{w}_{t(m)})}{t(m)} 
\leq \frac{\mathcal{H}(\rka) - \mathcal{H}(\bm{w}_{t(m)})}{\tau}
\end{align}
Now Step 2 and \eqref{ab} gives
\begin{align} \label{ap11}
 \si \inte \left|\frac{\nabla w_{r(m),i}}{w_{r(m),i}}\right|^2w_{r(m),i} \ dx \leq \frac{2}{\tau}
 \left[\mathcal{H}(\rka) - \mathcal{H}(\bm{w}_{t(m)})\right] + 2C_1(\Theta).
\end{align}
Since $\mathcal{H}(\bm{w}_t)$ remains bounded for small time (thanks to $(d)$) we get from \eqref{ap11}
\begin{align}\label{ap12}
 \si \inte \left|\frac{\nabla w_{r(m),i}}{w_{r(m),i}}\right|^2w_{r(m),i} \ dx \leq C_5(\Theta)\left(1 + \frac{1}{\tau}\right).
\end{align}
Using Lemma \ref{ce}$(a)$ again, we obtain
\begin{align*}
 \si\inte (w_{r(m),i})^2 \ dx\leq \frac{C_6(\Theta)}{\tau}.
\end{align*}
Since  $L^2(\rt)$ is a reflexive Banach space then, up to a subsequence (not labeled) we get  $w_{r(m),i} \rightharpoonup w_i$ weakly 
 in $L^2(\rt)$ as $m\rightarrow \infty.$ But we 
already know that $w_{r(m),i} \rightarrow \rki$ strongly in $L^1(\rt)$ as $m\rightarrow \infty.$
Hence, we must have $\rki =w_i,$ proving $\rki \in L^2(\rt).$

\vspace{0.1 cm}

\noindent
{\bf Step 4:} $W^{1,1}$-regularity of $\rk$ and the proof of \eqref{w11 estimate}.

\vspace{0.1 cm}

\noindent
{\it Proof of step 4.}
Define $v_{m,i} := \frac{\nabla w_{r(m),i}}{w_{r(m),i}}.$ Since $w_{r(m),i}$ narrowly converges to $\rki$ 
as $m\rightarrow \infty$
and \eqref{ap12} holds, we can invoke Proposition \ref{cvf} to conclude that there exists a vector field
$v_i \in L^2(\rt,\rki;\rt)$ such that 
\begin{align}\label{ap13}
 \inte \zeta \cdot v_{m,i}w_{r(m),i} \ dx \rightarrow \inte \zeta \cdot v_i\rki \ dx, \ \ as \ m\rightarrow \infty,
\end{align}
for all $\zeta \in C_c^{\infty}(\rt;\rt)$ and $i=1,\ldots,n$ and moreover,
\begin{align}\label{ap14}
 \inte |v_i|^2\rki \ dx\leq \liminf_{m\rightarrow \infty} \inte \left|\frac{\nabla w_{r(m),i}}{w_{r(m),i}}\right|^2w_{r(m),i} \ dx
 \leq C_5(\Theta)\left(1 + \frac{1}{\tau}\right).
\end{align}
Now since $v_{i}\rki \in L^1(\rt) \times L^1(\rt)$ and
\begin{align}\label{ap15}
 \inte \zeta\cdot v_{m,i}w_{r(m),i} = \inte \zeta \cdot \nabla w_{r(m),i} = -\inte (\nabla \cdot \zeta)w_{r(m),i}
 \rightarrow -\inte (\nabla \cdot \zeta)\rki
\end{align}
 we conclude from \eqref{ap13} and \eqref{ap15} that $\nabla \rki = v_i \rki.$ Which proves $\rki \in 
W^{1,1}(\rt).$ Finally from \eqref{ap14}, \eqref{ap11} and $(c)$ we get
\begin{align*}%\label{ap16}
 \inte \left|\frac{\nabla \rki}{\rki}\right|^2\rki \ dx \leq 
 &\liminf_{m\rightarrow \infty} \si \inte \left|\frac{\nabla w_{r(m),i}}{w_{r(m),i}}\right|^2w_{r(m),i} \ dx  \\
 &\leq \liminf_{m\rightarrow\infty} \frac{2}{\tau} \left[\mathcal{H}(\rka) - \mathcal{H}(\bm{w}_{t(m)})\right] + 2C_1(\Theta). \\
 &= \frac{2}{\tau} \left[\mathcal{H}(\rka) - \mathcal{H}(\rk)\right] + 2C_1(\Theta).
\end{align*}
This completes the proof of the lemma.
\end{proof}

Next we obtain regularity estimates on the Newtonian potential of $\rki.$
Recall that the Newtonian potential of $\rki$ is defined by 
\begin{align}\label{np}
 u_{\tau,i}^k(x) = -\frac{1}{2\pi}\inte \ln|x-y| \rki(y)dy, \ \ for \ i=1,\ldots,n.
\end{align}

\begin{Lem}\label{regularity u}
 Let $\tau \in (0,1)$ be given and assume that $\rk$ satisfies \eqref{theta}. Let $u_{\tau,i}^k$ be the Newtonian potential 
 defined by \eqref{np}. Then for $i \in \{1,\ldots,n\}$
  \begin{itemize}
  \item[(a)]  there exists a constant $C_{\Theta} >0$ and $R_0 >0$ such that 
  \begin{align*}
  \left|u_{\tau,i}^k(x) + \frac{\beta_i}{2\pi} \ln|x|\right| \leq C_{\Theta} 
  \ \ for \ all  \ x \in \rt \backslash B_R
  \ and \ R \geq R_0.
  \end{align*}
  \item[(b)] For each $R>0,$ there exists a constant $ C_{R,\Theta}>0$ such that 
  \begin{align*}
  ||u_{\tau,i}^k||_{H^1(B_R)} \leq C_{R,\Theta},\ \  \ 
 ||u_{\tau,i}^k||_{H^2(B_R)}^2 \leq \frac{2}{\tau}\left[\mathcal{H}(\rka) 
 - \mathcal{H}(\rk)\right] + C_{R,\Theta}.
  \end{align*}
 \end{itemize}
\end{Lem}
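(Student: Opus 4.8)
The plan is to extract all the information about $u_{\tau,i}^k$ from the two facts we already have about $\rki$ under hypothesis \eqref{theta}: that $\rki$ has bounded entropy and second moment (from \eqref{theta}), and that $\rki \in L^2(\rt)$ with the Fisher-information bound \eqref{w11 estimate} (from Lemma \ref{regularity}). Part (a) is a classical large-$|x|$ expansion of a Newtonian potential of a mass-$\beta_i$ density with finite second (or even first) moment. Writing
\[
u_{\tau,i}^k(x) + \frac{\beta_i}{2\pi}\ln|x| = -\frac{1}{2\pi}\inte \big(\ln|x-y| - \ln|x|\big)\rki(y)\,dy
= -\frac{1}{2\pi}\inte \ln\Big|\frac{x-y}{|x|}\Big|\rki(y)\,dy,
\]
I would split the $y$-integral into $|y|\le |x|/2$ and $|y|>|x|/2$. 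On the first region, $\big|\ln|(x-y)/|x|\,|\big| \le C|y|/|x|$ for $|x|$ large, so this piece is bounded by $C M_1(\rki)/|x|$, which is uniformly small using the second-moment (hence first-moment) bound in \eqref{theta}. On the second region, $\int_{|y|>|x|/2}\rki(y)\,dy \to 0$ uniformly (again by the second-moment bound, via Chebyshev), and the logarithmic singularity $\ln|x-y|$ is integrable against an $L^1\cap L^2$ density over any ball, with a bound depending only on $\Theta$; the $\ln|x|$ term on this region is likewise controlled since the mass there is small. Choosing $R_0$ and $C_\Theta$ depending only on $\Theta$ finishes (a).

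For part (b), the $H^1(B_R)$ bound: on any fixed ball $u_{\tau,i}^k$ is the Newtonian potential of an $L^1\cap L^2$ function, so by standard potential estimates (or by writing $u=G*\rki$ and using Young/Calderón–Zygmund locally together with part (a) to control boundary behavior) one gets $\|u_{\tau,i}^k\|_{L^2(B_R)} + \|\nabla u_{\tau,i}^k\|_{L^2(B_R)} \le C_{R,\Theta}$; here part (a) gives pointwise control of $u$ away from the origin so the local $L^2$ norm on $B_R$ is under control, and $\nabla u_{\tau,i}^k(x) = -\frac{1}{2\pi}\inte \frac{x-y}{|x-y|^2}\rki(y)\,dy$ lies in $L^2_{loc}$ by the weak-$(2,2)$ bound for the Riesz potential acting on $\rki\in L^2$. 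For the $H^2(B_R)$ estimate I would use $-\Delta u_{\tau,i}^k = \rki$, so $\|D^2 u_{\tau,i}^k\|_{L^2(B_R)} \le C_R\big(\|\rki\|_{L^2(\rt)} + \|u_{\tau,i}^k\|_{H^1(B_{2R})}\big)$ by interior Calderón–Zygmund estimates; then I would invoke the $L^2$-bound on $\rki$ from Step 3 of Lemma \ref{regularity}, namely $\inte(\rki)^2\,dx$ is controlled via Lemma \ref{ce}(a) by $\inte |\nabla\rki/\rki|^2\rki\,dx$ and the entropy, and the latter is bounded by $\frac{2}{\tau}[\mathcal{H}(\rka)-\mathcal{H}(\rk)] + C(\Theta)$ from \eqref{w11 estimate}. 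Tracking constants, $\|u_{\tau,i}^k\|_{H^2(B_R)}^2 \le \frac{2}{\tau}[\mathcal{H}(\rka)-\mathcal{H}(\rk)] + C_{R,\Theta}$ after adjusting the constant absorbed into $C_{R,\Theta}$.

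The main obstacle I anticipate is bookkeeping rather than conceptual: making the constants in (a) genuinely depend only on $\Theta$ (not on $\tau$ or $k$) requires using only the entropy and second-moment bounds, \emph{not} the Fisher information, in that part — the logarithmic singularity of the kernel against an $L\log L$ density on a ball of controlled mass has to be estimated by the standard inequality $\int_{B}\rho|\ln\rho| \ge$ control of $\int_B \rho \ln_-(1/|x-y|)$ type bounds, i.e. one needs that a density with bounded $\int \rho|\ln\rho|$ and bounded mass cannot concentrate enough to make $\int \ln|x-y|\,\rho(y)\,dy$ blow up; this is where Lemma \ref{ce} and a Jensen-type argument enter. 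The $H^2$ part is then routine once the $L^2$ bound on $\rki$ with the explicit $\frac{1}{\tau}$-dependence is in hand from Lemma \ref{regularity}.
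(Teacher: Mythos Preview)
Your overall strategy is right, and for part~(a) and the $H^2$ half of~(b) it matches the paper (the paper simply cites Chen--Li for~(a), and for $H^2$ it too reduces to the $L^2$ bound on $\rki$ via Lemma~\ref{ce}(a) and \eqref{w11 estimate}). The gap is in the $H^1(B_R)$ estimate. You obtain $\nabla u_{\tau,i}^k \in L^2_{loc}$ from the Riesz potential acting on $\rki \in L^2$, but the $L^2$ norm of $\rki$ carries the factor $1/\tau$ from Lemma~\ref{regularity}, so your route gives $\|u_{\tau,i}^k\|_{H^1(B_R)} \le C(R,\Theta,\tau)$. The statement requires a constant depending only on $R,\Theta$, and this $\tau$-uniformity is precisely what is used downstream (Lemma~\ref{eti} and the Arzel\`a--Ascoli step in Section~7). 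You flag the analogous bookkeeping issue for~(a) but not here. A second small point: part~(a) controls $u$ only on $\rt\setminus B_{R_0}$, so ``pointwise control away from the origin'' does not by itself bound $\|u\|_{L^2(B_R)}$ for $R<R_0$.

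The paper's fix is to decompose $u_{\tau,i}^k = v_i^k + w_i^k$ on a larger ball $B_{4R}$, where $v_i^k$ solves $-\Delta v_i^k = \rki$ in $B_{4R}$ with zero Dirichlet data, and $w_i^k := u_{\tau,i}^k - v_i^k$ is harmonic. For $v_i^k$ one invokes the Stampacchia--type fact that an $L\log L$ right-hand side yields $v_i^k \in H^1_0(B_{4R}) \cap L^\infty(B_{4R})$ with norm controlled by the entropy bound in \eqref{theta} alone --- no $\tau$. For $w_i^k$, part~(a) gives a pointwise bound on $\partial B_{3R}$, the maximum principle propagates it to $B_{3R}$, and interior derivative estimates for harmonic functions then bound every $\|D^\alpha w_i^k\|_{L^\infty(B_R)}$ by $C_{R,\Theta}$. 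Adding the two pieces gives the $\tau$-free $H^1$ bound; the same decomposition is reused for the $H^2$ estimate, with elliptic regularity on $v_i^k$ now absorbing the $\tau$-dependent $\|\rki\|_{L^2}^2$ and $\epsilon$ chosen so that the leading coefficient is $2/\tau$.
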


\begin{proof}
 $(a).$ Follows from the results of Chen-Li \cite{CL93} and by the hypothesis \eqref{theta},
 see \cite[Lemma $3.5$]{KW} for details. 

 $(b).$ For the first part, note that by $(a),$ $|u_{\tau,i}^k(x)| \leq C_1(\Theta, R)$ for 
 $x \in \partial B_{3R}$ and for some constant $C_1(\Theta, R)>0$ depending only on $\Theta$ and $R.$ Define 
 $v_{i}^k$ as follows
 \begin{align}\label{h1}
  \begin{cases}
  -\Delta v_i^k(x) = \rki(x) \ \ in \ B_{4R}, \\
  \ \ \ \ \ \ \ \ \ v_i^k = 0, \ \ \ \ \ \ \ on \ \partial B_{4R}.
  \end{cases}
\end{align}
Then by assumption \eqref{theta} and invoking the results of \cite{Stp,AF} we obtain
$v_i^k \in H^1_0(B_{4R}) \cap L^{\infty}(B_{4R})$ and $||v_i^k||_{L^{\infty}(B_{4R})} 
+ ||v_i^k||_{H^1(B_{4R})} \leq C_2(\Theta,R).$ Now,
$w_i^k := u_{\tau,i}^k - v_i^k$ is Harmonic in $B_{4R}$ and satisfies $|w_i^k(x)| \leq 
C_3(\Theta, R)$ for all $x \in \partial B_{3R}.$
By maximum principle, $|w_i^k(x)| \leq C_3(\Theta, R),$ for all $x \in B_{3R}.$ Furthermore, since $w_i^k$ is harmonic 
\begin{align} \label{h2}
 |D^{\alpha}w_i^k(x)| \leq \frac{C}{R^{2+|\alpha|}}||w_i^k||_{L^1(B_{3R})} \leq C_4(\Theta, R), 
 \ \ for \ all \ x \in B_R,
\end{align}
where $\alpha$ is a multi-index and $|\alpha|$ denotes the sum of all its components.
As a result, $||w_i^k||_{H^1(B_R)} \leq C_5(\Theta,R)$ and hence
\begin{align*}
 ||u_{\tau,i}^k||_{H^1(B_R)}  \leq ||v_i^k||_{H^1(B_R)} + ||w_i^k||_{H^1(B_R)} \leq C_6(\Theta,R).
\end{align*} 
For the second part, on one hand, by Lemma \ref{ce}(a), Lemma \ref{regularity} and by 
assumption on $\rki$ (equation \eqref{theta}) 
\begin{align*}
 ||\rki||_{L^2(\rt)}^2 &\leq \epsilon\left|\left|\frac{\nabla \rki}{\rki}\right|\right|_{L^2(\rt,\rki)}^2
 ||\rki\ln \rki||_{L^1(\rt)} + C(\epsilon) \\
 &\leq \Theta\epsilon\left|\left|\frac{\nabla \rki}{\rki}\right|\right|_{L^2(\rt,\rki)}^2 + C(\epsilon)
 \leq \frac{2\Theta\epsilon}{\tau}\left[\mathcal{H}(\rka) - \mathcal{H}(\rk)\right] + C(\epsilon, \Theta).
\end{align*}
So by elliptic regularity, $v_i^k$ defined in \eqref{h1} satisfies
\begin{align*}
 ||v_i^k||_{H^2(B_R)}^2 \leq C(R)||\rki||_{L^2(\rt)}^2 \leq
 \frac{\epsilon C_7(\Theta,R)}{\tau}\left[\mathcal{H}(\rka) - \mathcal{H}(\rk)\right] + C(\epsilon,\Theta, R).
\end{align*}
On the other hand by derivative estimate \eqref{h2}, $||w_i^k||_{H^2(B_R)} \leq C_8(\Theta, R).$ 
Choosing appropriate $\epsilon,$ we conclude the proof.
\end{proof}

\section{The Euler Lagrange equation}
\begin{Lem}\label{el}
 Let $\bm{\beta}$ be sub-critical, $\bm{\rho}^0 \in \g_2$ and $\tau>0$ be given. Let $\rk$ be the sequence 
 obtained iteratively by the MM-scheme \eqref{mms}. If $\nabla \varphi^k_i$ denotes the map transporting $\rkai$
 to $\rki$ then
 \begin{itemize}
  \item[(a)] Euler-Lagrange equation: 
 \begin{align}\label{el1}
  \frac{1}{\tau}\inte (\nabla \varphi^k_i(x)-x) \cdot  \zeta(\nabla\varphi^k_i(x))\rkai(x)dx
   =& -\inte  \zeta(x) \cdot\nabla\rki(x)dx \notag\\ 
   &+ \sj a_{ij} \inte  \zeta(x) \cdot \nabla u^{k}_{\tau,j}(x)\rki(x)dx.
 \end{align}
holds for all $i=1,\ldots, n$ and any $\zeta \in C_c^{\infty}(\rt;\rt).$ 
\item[(b)] Free energy production term:
the following identity holds
\begin{align}\label{el2}
 \frac{1}{\tau^2}d_{\mbox{{\tiny{W}}}}^2(\rki, \rkai) = 
 \inte \left|\frac{\nabla \rki(x)}{\rki(x)} - \sj a_{ij}\nabla u^k_{\tau,j}(x)\right|^2 \rki(x) dx.
\end{align}

\item[(c)] The following approximate weak solution is satisfied
\begin{align} \label{elfinal}
 &\left|\inte \psi(x)\left(\rki(x) - \rkai(x)\right) + \tau\inte  \nabla\psi(x) \cdot \nabla \rki \right. \notag\\
 &\left. -\tau\sj a_{ij} \inte  \nabla \psi(x) \cdot \nabla u^{k}_{\tau,j}(x)\rki(x)dx \right| 
 = O\left(||D^2\psi||_{L^{\infty}}\right) d_{\mbox{{\tiny{W}}}}^2(\rki,\rkai),
\end{align}
for all $\psi \in C_c^{\infty}(\rt).$
\end{itemize}
\end{Lem}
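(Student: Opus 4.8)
The strategy is the classical one for JKO-type schemes: perturb the minimizer $\rk$ by a smooth near-identity diffeomorphism, use the minimality inequality \eqref{1a}, and divide by the perturbation parameter before letting it go to zero. Concretely, for a fixed $i$ and $\zeta \in C_c^\infty(\rt;\rt)$, set $T_\epsilon = I + \epsilon\zeta$ (which is a diffeomorphism of $\rt$ for $|\epsilon|$ small, since $\zeta$ is smooth with compact support), and let $\rho_{\epsilon,i} := T_\epsilon \# \rki$ while leaving all other components unchanged. Then $\bm{\rho}_\epsilon \in \g_2$ (finite entropy and second moment are preserved under a compactly supported diffeomorphism perturbation, and absolute continuity persists because $\det DT_\epsilon = 1 + \epsilon\,\mathrm{div}\,\zeta + O(\epsilon^2) > 0$), so \eqref{1a} applies: $\f(\rk) + \tfrac{1}{2\tau}\bm{d}_{\mbox{{\tiny{W}}}}^2(\rk,\rka) \le \f(\bm{\rho}_\epsilon) + \tfrac{1}{2\tau}\bm{d}_{\mbox{{\tiny{W}}}}^2(\bm{\rho}_\epsilon,\rka)$. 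I would then compute the one-sided derivative of each of the three pieces at $\epsilon = 0^+$ and at $\epsilon = 0^-$; since $\zeta$ is arbitrary (can be replaced by $-\zeta$), the resulting inequality becomes the equality \eqref{el1}.

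The three limits are handled as follows. For the entropy term, the change-of-variables formula \eqref{cv2} gives $\mathcal{H}(\rho_{\epsilon,i}) - \mathcal{H}(\rki) = -\int_{\rt} \rki \ln\det DT_\epsilon$, whose $\epsilon$-derivative at $0$ is $-\int_{\rt} \rki\,\mathrm{div}\,\zeta = \int_{\rt}\zeta\cdot\nabla\rki$, using precisely the $W^{1,1}$-regularity of $\rki$ from Lemma \ref{regularity}. For the Wasserstein term, the standard estimate $\bm{d}_{\mbox{{\tiny{W}}}}^2(\bm{\rho}_\epsilon,\rka) \le \int_{\rt} |T_\epsilon(\nabla\varphi_i^k(x)) - x|^2\,\rkai(x)\,dx$ (composing the optimal map $\nabla\varphi_i^k$ with $T_\epsilon$ gives an admissible, though not necessarily optimal, plan), combined with the exact value $\bm{d}_{\mbox{{\tiny{W}}}}^2(\rk,\rka) = \int_{\rt}|\nabla\varphi_i^k(x)-x|^2\rkai(x)\,dx$, yields after expanding in $\epsilon$ that $\limsup_{\epsilon\to 0^+}\tfrac{1}{2\epsilon}\big(\bm{d}_{\mbox{{\tiny{W}}}}^2(\bm{\rho}_\epsilon,\rka) - \bm{d}_{\mbox{{\tiny{W}}}}^2(\rk,\rka)\big) \le \int_{\rt}(\nabla\varphi_i^k(x)-x)\cdot\zeta(\nabla\varphi_i^k(x))\,\rkai(x)\,dx$, with the reverse inequality at $\epsilon\to 0^-$; the factor $\tfrac1\tau$ in \eqref{el1} comes from the $\tfrac{1}{2\tau}$ weight. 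For the interaction term, the crucial gain over the naive situation is the $L^2$-regularity of $\rki$: the difference quotient is $\tfrac{1}{4\pi}\sum_j\tfrac{a_{ij}}{\epsilon}\int\int(\ln|x-y+\epsilon(\zeta_i(x)-\zeta_j(y))| - \ln|x-y|)\rki(x)\rkj(y)$, and with $\zeta_j\equiv 0$ for $j\ne i$ and $\zeta_i=\zeta$ this converges to $-\sum_j a_{ij}\int\zeta(x)\cdot\nabla u_{\tau,j}^k(x)\rki(x)\,dx$ — the integral makes sense because $\nabla u_{\tau,j}^k \in H^1_{loc} \subset L^p_{loc}$ for all finite $p$ (Lemma \ref{regularity u}(b)) and $\rki \in L^2$, so Hölder closes the estimate; the diagonal piece $i=j$ splits off symmetrically as in the scalar case and also contributes to the same $\nabla u_{\tau,i}^k$ term after symmetrization.

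Part (b) is then a purely algebraic consequence of (a): taking $\zeta = \nabla\psi$ in \eqref{el1} and noting that $\tfrac1\tau(\nabla\varphi_i^k - x)$ composed with $\nabla\varphi_i^k$ is (by a standard identity for the JKO velocity) the $L^2(\rki)$-representative of $\tfrac{\nabla\rki}{\rki} - \sum_j a_{ij}\nabla u_{\tau,j}^k$; squaring and integrating against $\rkai$, then changing variables via $\nabla\varphi_i^k\#\rkai = \rki$, gives $\tfrac{1}{\tau^2}d_{\mbox{{\tiny{W}}}}^2(\rki,\rkai) = \int|\tfrac{\nabla\rki}{\rki} - \sum_j a_{ij}\nabla u_{\tau,j}^k|^2\rki$. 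Part (c) follows by taking $\zeta = \nabla\psi$ in the \emph{inequality} form (before symmetrizing in $\pm\zeta$) and tracking the $O(\epsilon^2)$ remainder in the Wasserstein expansion: that remainder is bounded by $\|D^2\psi\|_{L^\infty}\int|\nabla\varphi_i^k-x|^2\rkai = \|D^2\psi\|_{L^\infty}d_{\mbox{{\tiny{W}}}}^2(\rki,\rkai)$, while the entropy and interaction difference quotients are exactly linear in $\epsilon$ to leading order with controlled remainders, giving \eqref{elfinal}. The main obstacle is the interaction term: one must justify differentiating under the double integral uniformly in $\epsilon$ near a logarithmic singularity, which is exactly why the $L^2$-bound on $\rki$ (hence $\nabla u_{\tau,j}^k \in W^{2,2}_{loc}$) obtained in Section 4 is indispensable — without it the limit \eqref{dificulty} is genuinely ill-defined, as emphasized in the introduction.
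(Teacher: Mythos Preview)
Your treatment of part (a) is correct and matches the paper's argument closely: the paper likewise perturbs by $T_{\epsilon,i} = I + \epsilon\zeta_i$, derives the one-sided inequality \eqref{el3}, then sets $\zeta_i = \pm\zeta$, $\zeta_j \equiv 0$ for $j\neq i$ to obtain the equality \eqref{el4}, invoking the $L^2$-regularity of $\rki$ and the $H^1_{loc}$-bound on $u^k_{\tau,j}$ from Lemmas \ref{regularity}--\ref{regularity u} to make sense of the interaction term. In (b) your reasoning is also essentially the paper's (the paper phrases it via the convex conjugate $\varphi^{k,*}_i$, so that $\nabla\varphi^{k,*}_i\#\rki = \rkai$, rewrites the left side of \eqref{el1} as $\tfrac{1}{\tau}\inte (x - \nabla\varphi^{k,*}_i(x))\cdot\zeta(x)\,\rki\,dx$, and reads off the pointwise identity \eqref{el5}); your reference to ``taking $\zeta = \nabla\psi$'' there is a slip --- that substitution plays no role in (b), only in (c).

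The genuine gap is in part (c). The error term $O(\|D^2\psi\|_{L^\infty})\, d_{\mbox{{\tiny{W}}}}^2(\rki,\rkai)$ does not come from the $O(\epsilon^2)$ remainders in the variational expansion: those remainders (for instance $\tfrac{\epsilon^2}{2}\inte|\zeta\circ\nabla\varphi^k_i|^2\rkai$ on the Wasserstein side) are bounded by $\epsilon^2\|\nabla\psi\|_{L^\infty}^2\beta_i$, involve neither $D^2\psi$ nor $d_{\mbox{{\tiny{W}}}}^2$, and simply vanish as $\epsilon\to 0$; they cannot leave a finite residual in \eqref{elfinal}. The correct mechanism, as in the paper, is to work with the already-established \emph{equality} \eqref{el1} at $\zeta = \nabla\psi$ and then apply the second-order Taylor expansion
\[
\psi(\nabla\varphi^k_i(x)) - \psi(x) - (\nabla\varphi^k_i(x) - x)\cdot\nabla\psi(\nabla\varphi^k_i(x)) = O(\|D^2\psi\|_{L^\infty})\,|x - \nabla\varphi^k_i(x)|^2
\]
to the integrand on the left. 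Integrating against $\rkai$ and using $\nabla\varphi^k_i\#\rkai = \rki$ converts $\inte\psi(\nabla\varphi^k_i)\rkai$ into $\inte\psi\,\rki$, while the Taylor remainder integrates to exactly $O(\|D^2\psi\|_{L^\infty})\,d_{\mbox{{\tiny{W}}}}^2(\rki,\rkai)$. There is no $\epsilon$ anywhere in this step.
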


\begin{proof}
(a) Let $\zeta_i \in C_c^{\infty}(\rt;\rt), i = 1,\ldots,n$ be $n$-smooth vector fields.  For $\epsilon >0$ and for each 
 $i$ define $T_{\epsilon,i} :=x + \epsilon \zeta_i.$ Then for $\epsilon >0$ small enough 
 $\det \nabla T_{\epsilon,i} = \det (I + \epsilon \nabla\zeta_i) >0,$ and so $T_{\epsilon,i}: \rt\rightarrow \rt$ is a 
 $C^{\infty}$-diffeomorphism. Let $\rho_{\epsilon,i}$ be the push forward of $\rki$ under the map $T_{\epsilon,i}$ (i.e.,
 $\rho_{\epsilon, i} = T_{\epsilon,i} \# \rki$) and let $\bm{\rho}_{\epsilon} = (\rho_{\epsilon,1}, \ldots, \rho_{\epsilon,n}).$
 Then by change of variable formula \eqref{cv2}
 \begin{align}\label{zxcv0}
  \f(\bm{\rho}_{\epsilon}) = & \ \si \inte \rki(x) \ln \rki(x) \ dx 
  -\si \inte \rki(x) \ln \left(\det \nabla T_{\epsilon,i}(x)\right) \ dx \notag \\
  &+ \si\sj \frac{a_{ij}}{4\pi}\inte\inte \Big(\ln|T_{\epsilon,i}(x)-T_{\epsilon,j}(y)|\Big)\rki(x)\rkj(y) dxdy.
 \end{align}
Since $\nabla\varphi^k_{i} \# \rkai = \rki$ and $T_{\epsilon,i} \# \rki = \rho_{\epsilon,i}$ the map 
$T_{\epsilon,i} \circ \nabla \varphi^k_i$ transports $\rkai$ onto $\rho_{\epsilon,i}$. By definition 
of the Wasserstein distance
\begin{align}\label{zxcv}
\begin{cases}
  d_{\mbox{{\tiny{W}}}}^2(\rki,\rkai) = \inte |x-\nabla \varphi^k_i(x)|^2\rkai(x) \ dx, \\ 
 d_{\mbox{{\tiny{W}}}}^2(\rho_{\epsilon,i},\rkai) 
 \leq \inte |x-T_{\epsilon,i} \circ \nabla\varphi^k_i(x)|^2\rkai(x) \ dx.
\end{cases}
\end{align}
Using the minimizing property of $\rk,$ \eqref{zxcv0} and \eqref{zxcv} we get
\begin{align*}
 0 \leq& \ \f(\bm{\rho}_{\epsilon}) + \frac{1}{2\tau}\bm{d}_{\mbox{{\tiny{W}}}}^2(\bm{\rho}_{\epsilon}, \rka) - \f(\rk) - 
 \frac{1}{2\tau}\bm{d}_{\mbox{{\tiny{W}}}}^2(\rk,\rka) \\
  \leq& \ \frac{1}{2\tau} \left[\si \inte \left(|x-T_{\epsilon,i} \circ \nabla\varphi^k_i(x)|^2
 - |x-\nabla \varphi^k_i(x)|^2\right) \rkai(x) \ dx\right]\\
 &-\si \inte \rki(x) \ln \left(\det \nabla T_{\epsilon,i}(x)\right) \ dx \\
 &+\si\sj \frac{a_{ij}}{4\pi}\inte\inte \left(\ln|T_{\epsilon,i}(x)-T_{\epsilon,i}(y)| - \ln|x-y|\right)\rki(x)\rkj(y) dxdy \\
 =& \ \frac{1}{2\tau} \left[\si \inte \left(|x-\nabla \varphi^k_i(x)-\epsilon \zeta_i \circ \nabla\varphi^k_i(x)|^2 
 - |x-\nabla \varphi^k_i(x)|^2\right) \rkai(x) \ dx\right]\\
 &-\si \inte \rki(x) \ln \left(\det (I+\epsilon \nabla\zeta_i(x))\right) \ dx \\
 &+\si\sj \frac{a_{ij}}{4\pi}\inte\inte \left(\ln|x-y+\epsilon(\zeta_i(x)-\zeta_j(y))|
 - \ln|x-y|\right)\rki(x)\rkj(y) dxdy.
\end{align*}

Dividing by $\epsilon>0$ and letting $\epsilon \rightarrow 0$ we get
\begin{align} \label{el3}
&\frac{1}{\tau}\si\inte (\nabla \varphi^k_i(x)-x) \cdot  \zeta_i(\nabla\varphi^k_i(x))\rkai(x)dx 
    -\si\inte \nabla \cdot \zeta_i(x) \rki(x)dx \notag\\
   &+\si\sj \frac{a_{ij}}{4\pi}\inte\inte \frac{ (\zeta_i(x)- \zeta_j(y)) \cdot (x-y)}{|x-y|^2}
   \rki(x)\rkj(y) \ dxdy \geq 0.
\end{align}
 Finally, fixing an $i$ and choosing $\zeta_i = \pm \zeta \in C_c^{\infty}(\rt;\rt)$ and $\zeta_j= 0$ for all $j 
 \neq i$ we obtain  
\begin{align} \label{el4}
 &\frac{1}{\tau}\inte (\nabla \varphi^k_i(x)-x) \cdot  \zeta(\nabla\varphi^k_i(x))\rkai(x)dx 
    -\inte \nabla \cdot \zeta(x) \rki(x)dx \notag\\
   &+\sj \frac{a_{ij}}{2\pi}\inte\inte \frac{ \zeta(x) \cdot (x-y)}{|x-y|^2}
   \rki(x)\rkj(y) \ dxdy = 0.
\end{align}
Recall that $\nabla u^k_{\tau,i}(x) = -\frac{1}{2\pi}\inte \frac{x-y}{|x-y|^2} \rki(y) \ dy.$
By regularity 
results of Lemma \ref{regularity}, Lemma \ref{regularity u}, $\rki \in L^2(\rt)$ and $|\nabla u^k_{\tau,i}| \in L^2_{loc}(\rt).$
Therefore, the last term in \eqref{el4} makes sense.

Using the definition of $\nabla u^k_{\tau,j}$ in \eqref{el4} and applying integration by parts on the second term 
 we deduce \eqref{el1}.

(b) Let $\varphi^{k,*}_i$ denotes the convex conjugate of $\varphi^k_i.$ Since both $\rki,\rkai \in \g_2,$ in particular,
absolutely continuous with respect to the Lebesgue measure,
$\nabla\varphi^{k,*}_i \# \rki = \rkai.$ 
Using this we can rewrite \eqref{el1} as
\begin{align*}
  \frac{1}{\tau}\inte (x-\nabla \varphi^{k,*}_i(x)) \cdot  \zeta(x)\rki(x)\ dx
   =& -\inte \zeta(x) \cdot \nabla \rki(x) \ dx \\ 
   &+ \sj a_{ij} \inte  \zeta(x) \cdot \nabla u^{k}_{\tau,j}(x)\rki(x) \ dx.
\end{align*}
Since this is true for any $\zeta \in C_c^{\infty}(\rt;\rt)$ we conclude
\begin{align}\label{el5}
 \frac{1}{\tau}\left( x-\nabla \varphi^{k,*}_i(x)\right)\rki(x)
   = - \nabla \rki(x) 
   + \sj a_{ij} \nabla u^{k}_{\tau,j}(x)\rki(x).
\end{align}
Recalling the definition of Wasserstein distance and \eqref{el5} we get
\begin{align*}
 \frac{1}{\tau^2}d_{\mbox{{\tiny{W}}}}^2(\rki, \rkai) &= \frac{1}{\tau^2}\inte  |x-\nabla \varphi^{k,*}_i(x)|^2\rki(x) \ dx \\
 &= \inte \left|\frac{\nabla \rki(x)}{\rki(x)} - \sj a_{ij}\nabla u^k_{\tau,j}(x)\right|^2 \rki(x) dx.
\end{align*}
(c) Finally, for $\psi \in C_c^{\infty}(\rt)$ putting $\zeta = \nabla \psi$ in \eqref{el1}
and using the Taylor expansion formula
\begin{align*}
 \left|\psi(\nabla \varphi_i^k(x)) - \psi(x)
 - (\nabla\varphi_i^k(x) - x) \cdot \nabla \psi(\varphi_i^k(x)) \right| 
 = O\left(||D^2\psi||_{L^{\infty}}\right)
 |x-\nabla \varphi_i^k(x)|^2
\end{align*}
we can rewrite the left hand side of \eqref{el1} as
\begin{align*}
 & \ \frac{1}{\tau}\inte (\nabla \varphi^k_i(x)-x)\cdot  \nabla \psi(\nabla\varphi^k_i(x))\rkai(x)dx \notag \\
 =& \ \frac{1}{\tau} \left[\inte \psi\circ\varphi_i^k(x)\rkai(x) - \inte \psi(x)\rkai(x)\right] 
  + \frac{1}{\tau}O\left(||D^2\psi||_{L^{\infty}}\right)d_{\mbox{{\tiny{W}}}}^2(\rki,\rkai) \\
 =& \ \frac{1}{\tau} \inte \psi(x)\Big(\rki(x) - \rkai(x)\Big) 
  +\frac{1}{\tau}O\left(||D^2\psi||_{L^{\infty}}\right)d_{\mbox{{\tiny{W}}}}^2(\rki,\rkai).
\end{align*}
Inserting the last identity in \eqref{el1} and multiplying by $\tau$ we obtain \eqref{elfinal}.
\end{proof}

\section{Estimates on the time interpolation}
\subsection{Time interpolation} We define the piecewise constant time dependent interpolation
\begin{align*}
 \bm{\rho}_{\tau}(t) = \rk, \ \ if \ t \in ((k-1)\tau, k\tau], \ \ k \geq 1.	
\end{align*}
In the subsequent sections, we will show that for any time $T >0,$ the piecewise constant interpolates $\bm{\rho}_{\tau}$ converges 
{\it in some sense} to a solution (according to definition \eqref{weak sol}) to the PKS-system \eqref{kss}
satisfying the energy dissipation inequality (or, free energy inequality).

\begin{Lem}\label{eti}
 For every $T>0,$ there exists a constant $\mathcal{C}_{in}(T)>0$ such that for every $\tau \in (0,1)$
\begin{align*}
\si \int_0^{T}\inte (\rti(t))^2 \ dxdt 
+ \si\int_{0}^{T} \inte \left|\frac{\nabla \rti(t)}{\rti(t)}\right|^2\rti(t) \ dxdt \leq \mathcal{C}_{in}(T).
\end{align*}
 Moreover, the Newtonian potentials $u_{\tau,i}(t), t\in (0,T)$ satisfy the uniform log decay and locally uniform 
 $H^1$-estimate stated in Lemma \ref{regularity u}. In addition, for every $R>0$ there 
 exists a constant $\mathcal{C}(R,T)$ independent of $\tau$ such that
 \begin{align*}
  \si\int_0^T ||u_{\tau, i}(t)||_{H^2(B_R)}^2 \ dt \leq \mathcal{C}(R,T).
 \end{align*}
 \end{Lem}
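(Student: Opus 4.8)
The plan is to transfer the discrete estimates of Lemma \ref{apriori} and the regularity bounds of Lemmas \ref{regularity} and \ref{regularity u} to the piecewise constant interpolant $\bm{\rho}_{\tau}$ by integrating in time and exploiting the telescoping structure of the entropy differences. First I would fix $T>0$ and a time step $\tau\in(0,1)$, and let $K=K(\tau)$ be the largest integer with $K\tau\le T$ (so $K\tau\le T<(K+1)\tau$). By Lemma \ref{apriori}, for every $k\le K+1$ the bound $M_2(\rk)+\si\inte\rki|\ln\rki|\,dx\le \mathcal{C}_{ap}(T+1)=:\Theta$ holds, so every $\rk$ appearing on $(0,T]$ satisfies the hypothesis \eqref{theta} of Lemmas \ref{regularity} and \ref{regularity u} with this uniform $\Theta$.

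Next I would integrate \eqref{w11 estimate} over time. Since $\bm{\rho}_\tau(t)=\rk$ on $((k-1)\tau,k\tau]$, we have
\begin{align*}
 \si\int_0^{T}\inte \left|\frac{\nabla \rti(t)}{\rti(t)}\right|^2\rti(t)\,dxdt
 &\le \sum_{k=1}^{K+1}\tau\,\si\inte\left|\frac{\nabla\rki}{\rki}\right|^2\rki\,dx \\
 &\le \sum_{k=1}^{K+1}\Big(2\big[\mathcal{H}(\rka)-\mathcal{H}(\rk)\big]+\tau\,C(\Theta)\Big)
 = 2\big[\mathcal{H}(\bm{\rho}^0)-\mathcal{H}(\bm{\rho}^{K+1}_\tau)\big]+(K+1)\tau\,C(\Theta),
\end{align*}
and the right-hand side is bounded independently of $\tau$: the telescoped entropy difference is controlled by $\mathcal{H}(\bm{\rho}^0)$ from above (which is finite as $\bm{\rho}^0\in\g_2$) and by $-\si\inte\rki|\ln\rki|\le -\mathcal{H}(\bm{\rho}^{K+1}_\tau)$... more precisely $\mathcal{H}(\bm{\rho}^{K+1}_\tau)\ge -\Theta$ from \eqref{theta}, while $(K+1)\tau\le T+1$. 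This gives a uniform bound on the Fisher information term. For the $L^2$-in-space-time bound on $\rti$, I would apply Lemma \ref{ce}(a) (the interpolation inequality controlling $\|\rho\|_{L^2}^2$ by $\epsilon\|\nabla\rho/\rho\|_{L^2(\rho)}^2\|\rho\ln\rho\|_{L^1}+C_\epsilon$) pointwise in $t$, using $\|\rti(t)\ln\rti(t)\|_{L^1}\le\Theta$ and the already-established time-integrated Fisher information bound, then integrate over $(0,T)$.

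For the Newtonian potential estimates, the uniform log decay and locally uniform $H^1(B_R)$ bounds are immediate from Lemma \ref{regularity u}(a) and the first part of (b), since those depend only on $\Theta$ (and $R$), not on $\tau$; they hold for each $\rk$ hence for $\bm{\rho}_\tau(t)$ at every $t$. For the $H^2$ bound I would again integrate the second inequality of Lemma \ref{regularity u}(b) in time and telescope exactly as above:
\begin{align*}
 \si\int_0^T\|u_{\tau,i}(t)\|_{H^2(B_R)}^2\,dt
 \le \sum_{k=1}^{K+1}\tau\,\si\|u_{\tau,i}^k\|_{H^2(B_R)}^2
 \le \sum_{k=1}^{K+1}\Big(2\big[\mathcal{H}(\rka)-\mathcal{H}(\rk)\big]+\tau\,C_{R,\Theta}\Big),
\end{align*}
which telescopes to $2[\mathcal{H}(\bm{\rho}^0)-\mathcal{H}(\bm{\rho}^{K+1}_\tau)]+(K+1)\tau\,C_{R,\Theta}\le\mathcal{C}(R,T)$, uniform in $\tau$. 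I would then set $\mathcal{C}_{in}(T)$ to be the sum of the two uniform constants obtained for the Fisher information and $L^2$ terms.

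The only mild subtlety — and the step I would be most careful about — is the bookkeeping near the right endpoint: one must include the index $k=K+1$ (since $\bm{\rho}_\tau(t)=\bm{\rho}^{K+1}_\tau$ for $t\in(K\tau,T]$ when $K\tau<T$) and verify that Lemma \ref{apriori} still applies there, which it does because $(K+1)\tau\le T+1$, so running that lemma on the horizon $T+1$ gives the needed uniform $\Theta$. Everything else is a direct summation of already-established per-step estimates, with the crucial cancellation coming from the telescoping of $\mathcal{H}(\rka)-\mathcal{H}(\rk)$ and the two-sided control of the entropy furnished by $\bm{\rho}^0\in\g_2$ and \eqref{theta}.
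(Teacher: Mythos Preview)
Your proposal is correct and follows essentially the same route as the paper: apply Lemma \ref{apriori} to get a uniform $\Theta$, use Lemma \ref{regularity} at each step, multiply by $\tau$, sum and telescope the entropy differences, then invoke Lemma \ref{ce}(a) for the $L^2$ bound and Lemma \ref{regularity u} in the same manner for the potentials. Your handling of the right endpoint (taking $K+1$ and using the horizon $T+1$ so that Lemma \ref{apriori} still applies) is in fact slightly more careful than the paper's own write-up, which integrates only up to $k\tau\le T$.
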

\begin{proof}
 Note that by a priori estimate Lemma \ref{apriori}, there exists a constant $\mathcal{C}_{ap}(T)$ such that for all 
 $1 \leq l \leq k$ satisfying $k\tau \leq T$ we have
\begin{align*}
 \si\inte \rli |\ln \rli| \ dx + M_2(\rl) \leq \mathcal{C}_{ap}(T).
\end{align*}

Applying Lemma \ref{regularity}, with $\Theta = \mathcal{C}_{ap}(T)$ we get 
\begin{align*}
 \si \inte \left|\frac{\nabla \rli(x)}{\rli(x)}\right|^2 \rli(x) \ dx \leq \frac{2}{\tau}
 \left[\mathcal{H}(\rla) - \mathcal{H}(\rl)\right] + C(T).
\end{align*}
for some constant $C(T)$ depending on $\mathcal{C}_{ap}(T)$ and for all $l=1,\ldots,k.$
Summing over $l$ and multiplying by $\tau$ gives 
\begin{align*}
 \si \tau\sum_{l=1}^k\inte \left|\frac{\nabla \rli(x)}{\rli(x)}\right|^2 \rli(x) \ dx \leq 
 2\sum_{l=1}^k\left[\mathcal{H}(\rla) - \mathcal{H}(\rl)\right] + C(T)k\tau.
\end{align*}
Using the definition of $\bm{\rho}_{\tau}$ and again using Lemma \ref{apriori} we get
\begin{align} \label{l2est}
 \si \int_{0}^{k\tau}\inte \left|\frac{\nabla \rho_{\tau,i}(t)}{\rho_{\tau,i}(t)}\right|^2 \rho_{\tau,i}(t) \ dxdt \leq 
 2\left[\mathcal{H}(\bm{\rho}^0)-\mathcal{H}(\rk)\right] + C(T)k\tau\leq C_1(T).
\end{align}
The $L^2((0,T)\times \rt)$ estimate follows from Lemma \ref{ce}(a) and \eqref{l2est}.
Finally, by definition  $u_{\tau,i}(t) = u_{\tau,i}^k$ if $t \in ((k-1)\tau, k\tau].$ Therefore, the conclusion
of the second part of the lemma follows from Lemma \ref{regularity u} and proceeding as the proof of 
\eqref{l2est}.
\end{proof}

\begin{Lem} \label{con}
 Let $T>0$ be given. There exists a constant $\mathcal{C}_{con}(T)>0$ such that for all $(s,t)\in [0,T]\times [0,T]$ and 
 $\tau \in (0,1)$ 
 \begin{align*}
 d_{\mbox{{\tiny{W}}}}(\rho_{\tau,i}(t), \rho_{\tau,i}(s)) \leq \mathcal{C}_{con}(T)\left(\sqrt{|t-s|} + \sqrt{\tau}\right) \ \ for \ 
 all \ i=1,\ldots,n. 
 \end{align*}
\end{Lem}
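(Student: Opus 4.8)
The estimate is a standard ``$\tfrac12$-Hölder in time'' bound for the piecewise-constant JKO interpolation, and the plan is to derive it from the a priori bound \eqref{ap1} of Lemma \ref{apriori} together with the triangle inequality for $d_{\mbox{{\tiny{W}}}}$. First I would reduce to the case $s \le t$ and to the single-component distance $d_{\mbox{{\tiny{W}}}}$ (the product distance $\bm{d}_{\mbox{{\tiny{W}}}}$ dominates each $d_{\mbox{{\tiny{W}}}}(\rho_{\tau,i},\eta_i)$). Writing $s \in ((h-1)\tau, h\tau]$ and $t \in ((k-1)\tau, k\tau]$ for integers $h \le k$ with $k\tau \le T + \tau \le T+1$, the interpolant is constant on each subinterval, so $\rho_{\tau,i}(s) = \rho_{\tau_,i}^{h}$ and $\rho_{\tau,i}(t) = \rho_{\tau,i}^{k}$, and by the triangle inequality
\begin{align*}
 d_{\mbox{{\tiny{W}}}}(\rho_{\tau,i}(t), \rho_{\tau,i}(s)) \le \sum_{l=h+1}^{k} d_{\mbox{{\tiny{W}}}}(\rho_{\tau,i}^{l}, \rho_{\tau,i}^{l-1}).
\end{align*}

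Next I would apply Cauchy--Schwarz to the sum over the $(k-h)$ terms and invoke \eqref{ap1}: since $\bm{d}_{\mbox{{\tiny{W}}}}^2 = \si d_{\mbox{{\tiny{W}}}}^2(\rho_{\tau,i},\eta_i)$ controls each summand,
\begin{align*}
 \left(\sum_{l=h+1}^{k} d_{\mbox{{\tiny{W}}}}(\rho_{\tau,i}^{l}, \rho_{\tau,i}^{l-1})\right)^{2}
 \le (k-h)\sum_{l=h+1}^{k} d_{\mbox{{\tiny{W}}}}^2(\rho_{\tau,i}^{l}, \rho_{\tau,i}^{l-1})
 \le (k-h)\,\tau\,\mathcal{C}_{ap}(T+1),
\end{align*}
where I use Lemma \ref{apriori} on the interval $[0,T+1]$ to absorb the endpoint subinterval. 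It remains to compare $(k-h)\tau$ with $|t-s|$: since $s > (h-1)\tau$ and $t \le k\tau$ we have $(k-h)\tau = k\tau - h\tau < |t-s| + \tau$, hence $(k-h)\tau \le |t-s| + \tau$, and therefore
\begin{align*}
 d_{\mbox{{\tiny{W}}}}(\rho_{\tau,i}(t), \rho_{\tau,i}(s)) \le \sqrt{\mathcal{C}_{ap}(T+1)}\,\sqrt{|t-s| + \tau} \le \sqrt{\mathcal{C}_{ap}(T+1)}\left(\sqrt{|t-s|} + \sqrt{\tau}\right),
\end{align*}
using $\sqrt{a+b}\le\sqrt a+\sqrt b$. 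Setting $\mathcal{C}_{con}(T) := \sqrt{\mathcal{C}_{ap}(T+1)}$ (uniform in $\tau \in (0,1)$) and noting the bound is symmetric in $s,t$ finishes the proof.

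There is no serious obstacle here; it is a bookkeeping argument. The only points that need care are: (i) the $\sqrt{\tau}$ error term, which is the price of comparing the discrete time grid with continuous time and is exactly why the extra $\tau\mapsto 0$ limit will later be harmless; and (ii) making sure the constant $\mathcal{C}_{con}(T)$ does not depend on $\tau$ — this is why one applies Lemma \ref{apriori} on the slightly enlarged interval $[0,T+1]$ (or equivalently notes $k\tau \le T+1$ since $\tau<1$), so that the indices $k$ with $t\le T$ are all covered uniformly.
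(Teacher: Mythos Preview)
Your argument is correct and follows the same route as the paper's proof: triangle inequality along the discrete chain, Cauchy--Schwarz, and the a priori bound \eqref{ap1}, with the same $(k-h)\tau \le |t-s|+\tau$ comparison at the end. One tiny slip: to get $(k-h)\tau < |t-s|+\tau$ you should invoke $t>(k-1)\tau$ and $s\le h\tau$ (the other pair of inequalities from the interval membership), not $s>(h-1)\tau$ and $t\le k\tau$; the conclusion is unaffected.
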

\begin{proof}
 With out loss of generality we can assume that $0 \leq s < t.$ Let $k = [t/\tau]+1$ and $\tilde k = [s/\tau]+1$
 (where $[a]$ denotes the largest integer smaller than $a$). Then using the definition of $\bm{\rho}_{\tau}$
 and Lemma \ref{apriori} we get
 \begin{align*}
 d_{\mbox{{\tiny{W}}}}(\rho_{\tau,i}(t), \rho_{\tau,i}(s))=
&d_{\mbox{{\tiny{W}}}}(\rho_{\tau,i}^{k}, \rho_{\tau,i}^{\tilde k})
\leq \sum_{l=\tilde k}^{k-1} d_{\mbox{{\tiny{W}}}}(\rho_{\tau,i}^{l+1}, \rho_{\tau,i}^{l}) 
\leq \sqrt{k - \tilde k}\left[\sum_{l=\tilde k}^{k} d_{\mbox{{\tiny{W}}}}^2(\rho_{\tau,i}^{l+1}, \rho_{\tau,i}^{l})\right]^{\frac{1}{2}}\\
&\leq \sqrt{k - \tilde k}\sqrt{\mathcal{C}_{ap}(T)\tau}
\leq \mathcal{C}_{con}(T)\left(\sqrt{|t-s|} + \sqrt{\tau}\right). 
\end{align*}
\end{proof}

\section{Convergence of the MM scheme}
We are going to apply the refined Arzel\`{a}-Ascoli Theorem \ref{ArzelaAscoli} (see appendix) to 
the following choices:
$\mathcal{S} = \mathcal{P}_2^{\beta_i}(\rt),$
$\sigma$ = narrow convergence in $\mathcal{P}_2^{\beta_i}(\rt),$
$K = \{\rho_{\tau,i}(t)| \ t \in [0,T], \ \tau \in (0,1)\},$
$d = d_{\mbox{{\tiny{W}}}},$
$w(s,t) = \sqrt{|s-t|},$ note that here $\mathcal{B} = \emptyset.$

\subsection{Convergence of the time interpolation} By Lemma \ref{apriori}
\begin{align*}
 \sup_{t \in [0,T], \tau \in (0,1)} \si\left(\inte \rho_{\tau,i}(t)|\ln \rho_{\tau,i}(t)| + M_2(\rho_{\tau,i}(t))\right) < +\infty.
\end{align*}
Therefore the set $\{\rho_{\tau,i}(t)| \ t \in [0,T], \ \tau \in (0,1)\}$ is weakly 
sequentially compact in $ L^1(\rt)$ (in particular, sequentially compact with respect to $\sigma$).
Moreover, by Lemma \ref{con}
\begin{align*}
 \limsup_{\tau \rightarrow 0} d_{\mbox{{\tiny{W}}}}(\rho_{\tau,i}(t), \rho_{\tau,i}(s)) \leq 
 \mathcal{C}_{con}(T)\sqrt{|t-s|}.
\end{align*}
Therefore by Theorem \ref{ArzelaAscoli}, there exists a curve $\bm{\rho}(t): [0,T]\rightarrow (L^1(\rt))^n$ and 
a monotone decreasing sequence $\tau_m \rightarrow 0$ such that 
\begin{itemize}
 \item $\rho_{\tau_m, i}(t) \rightharpoonup \rho_i(t)$ weakly in $L^1(\rt)$ for every $t \in [0,T].$
 \item $\rho_i$ is $d_{\mbox{{\tiny{W}}}}$ continuous. More precisely, $d_{\mbox{{\tiny{W}}}}(\rho_i(t),\rho_i(s)) 
 \leq \mathcal{C}_{con}(T)\sqrt{|t-s|}.$ Thus
 $\rho_i \in C^{0,\frac{1}{2}}([0,T];\mathcal{P}_2^{\beta_i}(\rt))$ for $i=1,\ldots,n.$
\end{itemize}
Furthermore, by Lemma \ref{eti}, $\rho_{\tau_m,i}$ is bounded in $L^2((0,T)\times \rt)$
and hence
\begin{align*}
 \rho_{\tau_m, i} \rightharpoonup \rho_i \ \ weakly \ in \ L^2((0,T)\times \rt),
\end{align*}
proving that $\rho_i \in L^2((0,T)\times \rt).$ 
Finally, consider the sequences
\begin{align*}
 d\mu_m = \frac{1}{T\beta_i}\rho_{\tau_m,i}dxdt, \ \ \tilde v_m = \left(\frac{\nabla_x \rho_{\tau_m,i}}{\rho_{\tau_m,i}}, 1\right)
 =(v_m,1).
\end{align*}
By Lemma \ref{eti} and Proposition \ref{cvf}, applied to the probability measures $d\mu_m$ 
and the vector fields $\tilde v_m$ we see that there exists a vector
field $v \in L^2((0,T)\times\rt, \rho_i\ ;\rt)$ such that 
\begin{align*}
 \int_0^T\inte \zeta \cdot v_m\rho_{\tau_m,i} \ dxdt\rightarrow \int_0^T\inte \zeta \cdot v \rho_i \ dxdt, \ \ 
\ \ for \ all \ \zeta \in C_c^{\infty}((0,T)\times \rt ; \rt).
 \end{align*}
Moreover, since $v\rho_i \in L^1((0,T)\times \rt;\rt)$ and
\begin{align}\label{nabcon}
 \int_0^T\inte \zeta \cdot v_m\rho_{\tau_m,i} &= \int_0^T\inte \zeta \cdot \nabla_x \rho_{\tau_m,i} 
 &=-\int_0^T\inte (\nabla_x \cdot \zeta)  \rho_{\tau_m,i}\rightarrow
-\int_0^T\inte (\nabla_x \cdot \zeta)  \rho_i,
\end{align}
we conclude $v\rho_i = \nabla_x \rho_i$ and hence $\rho_i \in L^1((0,T);W^{1,1}(\rt)).$ By lower semicontinuity
(Proposition \ref{cvf} equation \eqref{lscv}) and Lemma \ref{eti}
\begin{align}\label{7.3}
 \int_0^T \inte \left|\frac{\nabla \rho_i(t)}{\rho_i(t)}\right|^2\rho_i(t) \ dxdt < \infty,
\end{align}
proving that $\frac{\nabla \rho_i}{\rho_i} \in L^2((0,T) \times \rt, \rho_i \ ; \rt)$ for all $i \in \{1,\ldots,n\}.$
  
\subsection{Convergence of the Newtonian potential}
This is a crucial step because apparently we do not have explicit continuity in time estimates on $\bm{u}_{\tau_m}$
as in Lemma \ref{con} for $\bm{\rho}_{\tau_m}$. Fortunately, we do have continuity in time 
estimate on the harmonic part of $\bm{u}_{\tau_m},$ which together with the results of section $7.1$
enables us to prove the following lemma: set
 \begin{align*}
  \omega(x) = \frac{1}{(1+|x|^2)^2}
 \end{align*}

 \begin{Lem}\label{conv}
Up to a subsequence  $u_{\tau_m,i}(t) \rightarrow u_i(t)$ strongly in $ L^2(0,T;L^2(\rt,\omega))$. 
 \end{Lem}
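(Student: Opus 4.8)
The idea is to compress the convergence $u_{\tau_m,i}(t)\to u_i(t)$ into two pieces: a local piece handled by the $H^1_{loc}$ bound together with a time-compactness argument, and a tail piece handled by the uniform logarithmic decay estimate. I would first observe that, by Lemma~\ref{eti}, $u_{\tau_m,i}$ is bounded in $L^2(0,T;H^1(B_R))$ for every $R>0$, while the Euler–Lagrange relation \eqref{elfinal} (or directly the fact that $-\Delta u_{\tau,i}=\rho_{\tau,i}$ with $\rho_{\tau,i}$ obeying a discrete continuity equation) gives a uniform bound on $\partial_t u_{\tau_m,i}$ in a negative Sobolev norm on $B_R$, at least after mollifying the piecewise-constant time dependence. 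Combined, these yield, via the Aubin–Lions–Simon lemma, strong convergence of $u_{\tau_m,i}$ in $L^2(0,T;L^2(B_R))$ for each fixed $R$, along a further subsequence; a diagonal extraction over $R=1,2,3,\dots$ produces one subsequence that works on every ball. Since $\omega$ is bounded on $\rt$, strong $L^2(0,T;L^2(B_R,\omega))$ convergence is immediate from strong $L^2(0,T;L^2(B_R))$ convergence.

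The second step is to control the tail $\rt\setminus B_R$. Here I would invoke Lemma~\ref{regularity u}(a) (valid uniformly in $\tau$ and $k$, hence uniformly in $t$ by the definition of the interpolates, via Lemma~\ref{eti}): there are constants $C_\Theta,R_0$ with
\begin{align*}
 \left|u_{\tau_m,i}(x,t)+\frac{\beta_i}{2\pi}\ln|x|\right|\le C_\Theta,\qquad |x|\ge R_0.
\end{align*}
The same bound holds for the limit $u_i$ (either by passing to the limit in the pointwise estimate using the weak $L^1$ convergence of $\rho_{\tau_m,i}(t)$ and the explicit Newtonian kernel, or by directly reproving it for $u_i$). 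Therefore on $\rt\setminus B_R$ we have $|u_{\tau_m,i}(x,t)-u_i(x,t)|\le \frac{\beta_i}{\pi}|\ln|x||+2C_\Theta$, and since $\int_{\rt\setminus B_R}(\ln|x|)^2\,\omega(x)\,dx\to 0$ as $R\to\infty$ (because $\omega(x)=(1+|x|^2)^{-2}$ decays like $|x|^{-4}$, beating the logarithm), the tail contribution to $\int_0^T\int_{\rt}|u_{\tau_m,i}-u_i|^2\omega\,dxdt$ is at most $\varepsilon(R)\to0$, uniformly in $m$.

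Putting the two pieces together gives the result: fix $\varepsilon>0$, choose $R$ so large that the tail integral is $<\varepsilon$ for all $m$ and for the limit, then use the strong $L^2(0,T;L^2(B_R))$ convergence on the bounded part to make the remaining contribution $<\varepsilon$ for $m$ large; hence $u_{\tau_m,i}(t)\to u_i(t)$ in $L^2(0,T;L^2(\rt,\omega))$ along the chosen subsequence. I expect the main obstacle to be the first step, namely extracting strong local-in-space convergence in the time variable: the interpolates $\bm{\rho}_{\tau_m}$ are only $\tfrac12$-Hölder in the Wasserstein metric (Lemma~\ref{con}), which does not directly give time-equicontinuity of $u_{\tau_m,i}$ in any $L^2$-based space, and one must instead argue through the distributional time derivative supplied by the discrete scheme \eqref{elfinal} — the logarithmic decay/refined Arzelà–Ascoli strategy flagged in the introduction is precisely what makes this work, whereas a naive Hölder-in-time estimate would only control the harmonic part of $u_{\tau_m,i}$ and not $u_{\tau_m,i}$ itself.
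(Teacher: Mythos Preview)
Your plan is viable but takes a genuinely different route from the paper. You propose an Aubin--Lions--Simon argument: bound $u_{\tau_m,i}$ in $L^2(0,T;H^1(B_R))$ (or $H^2(B_R)$, from Lemma~\ref{eti}) and bound its discrete time derivative in a negative Sobolev norm on $B_R$, then extract local strong convergence and glue to the logarithmic tail estimate. The paper instead works \emph{pointwise in time}: for each $t$ in a countable dense set $\mathcal{K}\subset[0,T]$ it extracts a subsequence converging in $L^2_{loc}$ (hence in $L^2(\rt,\omega)$ by the log decay); then, for any other $s$, it shows that every subsequential limit $v_i(s)$ satisfies $-\Delta(u_i(s)-v_i(s))=0$ (via the Wasserstein--H\"older estimate of Lemma~\ref{con} applied to $\int(\rho_{\tau_m,i}(t)-\rho_{\tau_m,i}(s))\psi$), so $u_i(s)-v_i(s)$ is a harmonic function with log decay, hence a constant $\varepsilon_i$; the constant is then pinned to zero using the continuity of the quadratic interaction $\int u_{\tau_m,i}(s)\rho_i(s)$ under weak $L^1$ convergence. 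Dominated convergence finishes.

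Your approach is more systematic and avoids this harmonic/interaction-energy trick, but the step you flag as the main obstacle deserves one more word of caution: equation~\eqref{elfinal} controls $\frac{1}{\tau}(\rho^k-\rho^{k-1})$ against \emph{compactly supported} $\psi$, whereas to bound $\frac{1}{\tau}(u^k-u^{k-1})$ in $H^{-s}(B_R)$ you must test $\rho^k-\rho^{k-1}$ against $G*\phi$ (the Newtonian potential of $\phi\in C_c^\infty(B_R)$), which is \emph{not} compactly supported. What saves you is that $G*\phi$ is globally Lipschitz (its gradient is $O(|y|^{-1})$ at infinity and locally bounded since $\nabla G\in L^p_{loc}$ for $p<2$), so Kantorovich--Rubinstein gives $\big|\int_{B_R}\phi(u^k-u^{k-1})\big|\le C_R\|\phi\|_{L^p}\,d_{\mbox{{\tiny{W}}}}(\rho^k,\rho^{k-1})$ for $p>2$, and then $\sum_k\tau\big(\tfrac{d_{\mbox{{\tiny{W}}}}}{\tau}\big)^2\le\mathcal{C}_{ap}(T)$ from Lemma~\ref{apriori} yields the $L^2(0,T;H^{-s}(B_R))$ bound on the discrete time derivative. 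With that point made explicit, your argument goes through; the paper's route instead trades this nonlocal estimate for the ``harmonic remainder plus interaction-energy continuity'' identification, which also yields pointwise-in-$t$ convergence as a byproduct.
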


\begin{proof}
Since $\rho_i \in L^2((0,T)\times\rt)),$ for almost every $t \in [0,T], \ ||\rho_i(t)||_{L^2(\rt)} < +\infty.$  
Denote $\iota := \{t \in [0,T]| \ ||\rho_i(t)||_{L^2(\rt)} = +\infty\}.$
By Lemma \ref{eti}, for each $t \in [0,T], \ u_{\tau_m,i}(t)$ is bounded in $H^1_{loc}(\rt).$
Hence we can extract a countable dense subset $\mathcal{K}$ of $[0,T]\backslash \iota$ and a 
subsequence (indexed by $\tau_m$ itself) such that 
\begin{align*}
 u_{\tau_m, i}(t) \rightarrow u_i(t) \ strongly \ in \ L^2_{loc}(\rt) \ \ for \  all \ t  \in \mathcal{K}, \ i=1,\ldots,n. 
\end{align*}
Since, by Lemma \ref{eti}, $u_{\tau_m,i}$ has uniform log decay  we deduce
\begin{align*}
  u_{\tau_m, i}(t) \rightarrow u_i(t) \ strongly \ in \ L^2(\rt,\omega) \ \ for \  all \ t \in  \mathcal{K}, \ i=1,\ldots,n. 
\end{align*}

Next we need some continuity in time estimates on $u_{\tau_m,i}$ in order to conclude the above 
$L^2$-convergence holds for all $t \in [0,T]\backslash \iota.$

\vspace{0.2 cm}

{\bf Continuity estimate 1:} by Lemma \ref{con}, for every $s,t \in [0,T]$ 
and $\psi \in C_c^{\infty}(\rt)$  
\begin{align}\label{conest1}
 \left|\inte (u_{\tau_m,i}(t) - u_{\tau_m,i}(s))\Delta\psi \ dx\right| &= 
 \left|\inte (\rho_{\tau_m,i}(t) - \rho_{\tau_m,i}(s))\psi \ dx\right| \notag\\
 &\leq ||\nabla\psi||_{L^{\infty}(\rt)}d_{\mbox{{\tiny{W}}}}(\rho_{\tau_m,i}(t), \rho_{\tau_m,i}(s)) \notag\\
 &\leq \mathcal{C}_{con}(T)||\nabla\psi||_{L^{\infty}(\rt)}\left(\sqrt{|t-s|} + \sqrt{\tau_m}\right).
\end{align}

{\bf Continuity estimate 2:} the quadratic interaction term is continuous 
with respect to the weak $L^1$-convergence. Particularly, for each $t \in [0,T],$
\begin{align} \label{conest2}
\lim_{m\rightarrow \infty} \inte u_{\tau_m,i}(t)\rho_{\tau_m,i}(t) \ dx 
 =\lim_{m\rightarrow \infty}\inte u_{\tau_m,i}(t)\rho_{i}(t) \ dx
 = \inte u_i(t) \rho_i(t) \ dx.
\end{align}
For a proof of \eqref{conest2} see \cite[Lemma $2.3$ and the proof of Lemma $3.1$]{JKO2}. 
Here the crucial point is that $\rho_{\tau_m,i}(t)\rightharpoonup \rho_i(t)$ weakly in $L^1(\rt)$ for 
all $t \in [0,T]$ and has uniform entropy and second moment bound.

Now take any $s \in [0,T] \backslash (\mathcal{K}\cup \iota).$ Let $u_{\tau_m^{\prime},i}(s)$
be a subsequence of $u_{\tau_m,i}(s)$ such that $u_{\tau_m^{\prime},i}(s) \rightarrow v_i(s)$ 
strongly in $L^2_{loc}(\rt)$
for some $v_i(s) \in H^1_{loc}(\rt).$ Along the same subsequence passing to the limit in \eqref{conest1} we get
\begin{align}\label{b}
 \left|\inte (u_i(t) - v_i(s))\Delta\psi \ dx\right| \leq C ||\nabla \psi||_{L^{\infty}}\sqrt{|t-s|}, \ for \ t \in \mathcal{K},
 \ s \in [0,T]\backslash (\mathcal{K}\cup \iota).
\end{align}
This, and $\rho_i \in C^{0,\frac{1}{2}}([0,T];\mathcal{P}_2^{\beta_i}(\rt))$ imply that 
\begin{align*}
 \inte (u_i(s) - v_i(s))\Delta \psi \ dx = 0, \ \ for  \ all \ \psi \in C_c^{\infty}(\rt).
\end{align*}

Hence $u_i(s) - v_i(s)$ is Harmonic in $\rt.$ But both have log decay and therefore $u_i(s)-v_i(s)$
must be a constant. Denote this constant by $\varepsilon_i.$ We claim that $\varepsilon_i = 0.$
Indeed, by strong $L^2$-convergence of $u_{\tau_m^{\prime},i}(s)$ and using $\rho_i(s) \in L^2(\rt)$
we obtain
\begin{align}\label{b1}
 \inte u_{\tau_m^{\prime},i}(s) \rho_i(s) \ dx \rightarrow \inte v_i(s)\rho_i(s) \ dx 
 = \inte u_i(s)\rho_i(s) \ dx - \epsilon_i\beta_i.
\end{align}
On the other hand by \eqref{conest2}
\begin{align}\label{b2}
 \inte u_{\tau_m^{\prime},i}(s) \rho_i(s) \rightarrow   \inte u_i(s)\rho_i(s).
\end{align}
Hence \eqref{b1} and \eqref{b2} we get $\epsilon_i = 0.$
By Lemma \ref{eti}, $||u_{\tau_m, i}(t)||_{L^2(\rt,\omega)}^2 \leq C(T)$ for all $t \in [0,T].$
We can apply dominated convergence theorem to conclude the Lemma. 
\end{proof}

Let $\xi \in C_c^{\infty}([0,T) \times \rt)$ and $\psi \in C_c^{\infty}(\rt)$ be a smooth compactly supported function such that $\psi = 1$ on a domain 
containing the support of 
$\xi(t)$ for all $t.$ Then again by Lemma \ref{eti}, the sequence $\{\psi u_{\tau_m, i}\}$ 
is bounded in $L^2(0,T; H^2(\rt)).$
By Lemma \ref{conv}, Lemma \ref{compactness in l2}  (applied to $X = H^2(\rt), B = H^1(\rt,\omega),
Y = L^2(\rt,\omega), 
F = \{\psi u_{\tau_m,i}\}$)
we deduce
\begin{align}\label{conv u}
   \nabla \xi \cdot \nabla u_{\tau_m,j} \rightarrow  \nabla \xi \cdot \nabla u_{j}
 \ \ in \ L^2(0,T;L^2(\rt)) \ \ for \ all \ j=1,\ldots,n.
\end{align}

\subsection{Rewriting the Euler-Lagrange equation}
Set $N_m := [T/\tau_m]$ and take $\xi$ of class $ C_c^{\infty}([0,T)\times \rt).$ Then 

\begin{align}\label{cov1}
 &-\int_0^{N_m\tau_m} \inte \partial_t\xi \rho_{\tau_m,i}(t) = \notag \\
\sum_{k=1}^{N_m}\inte \xi(\cdot, (k-1)\tau_m)&\left(\rho_{\tau_m,i}^k - \rho_{\tau_m,i}^{k-1}\right)
 - \inte \xi(\cdot, N_m\tau_m)\rho_{\tau_m,i}^{N_m} 
 + \inte \xi(0) \rho_i^0(x)
\end{align}
Since $\xi \equiv 0$ at $t=T$
\begin{align}\label{cov2}
\left|\inte \xi(N_m\tau_m)\rho_{\tau_m,i}^{N_m}\right| 
 \leq \inte |\xi(N_m\tau_m)-\xi(T)|\rho_{\tau_m,i}^{N_m} = O(\tau_m), 
\end{align}
where we used $|T-N_m\tau_m|\leq \tau_m.$ 
For each $k \in \{1,\ldots,N_m\}$ applying \eqref{elfinal}(Lemma \ref{el}(c)) with 
$\psi =  \xi((k-1)\tau_m)$ we get
\begin{align*}
 &\left|\inte \xi(\cdot, (k-1)\tau_m)\left(\rkim(x) - \rkaim(x)\right) + \tau_m\inte  \nabla\xi(\cdot,(k-1)\tau_m) \cdot \nabla \rkim \right. \notag\\
 &\left. -\tau_m\sj a_{ij} \inte  \nabla \xi(\cdot,(k-1)\tau_m) \cdot \nabla u^{k}_{\tau_m,j}(x)\rkim(x)dx \right| 
 = O(||D^2\xi||_{L^{\infty}}) d_{\mbox{{\tiny{W}}}}^2(\rkim,\rkaim).
\end{align*}
Summing over $k = 1,\ldots, N_m$ and using Lemma \ref{apriori} we get
\begin{align}\label{cov3}
 &\sum_{k=1}^{N_m}\inte \xi(\cdot,(k-1)\tau_m)\left(\rkim(x) - \rkaim(x)\right) + \sum_{k=1}^{N_m}\tau_m\inte \nabla\xi(\cdot,(k-1)\tau_m) \cdot \nabla \rkim  \notag\\
 & -\sum_{k=1}^{N_m}\tau_m\sj a_{ij} \inte \nabla \xi(\cdot,(k-1)\tau_m) \cdot \nabla u^{k}_{\tau_m,j}(x)\rkim(x)dx = O(\tau_m).
\end{align}
We write $(I_1) + (I_2) + (I_3)$ for the terms on the left hand side of \eqref{cov3}.
Thanks to \eqref{cov1} and \eqref{cov2} we can write
\begin{align*}
 (I_1) = -\int_0^{T} \inte \partial_t\xi \rho_{\tau_m,i} 
 - \inte \xi(0) \rho_i^0(x) + O(\tau_m).
\end{align*}
For $(I_2)$ and $(I_3)$ we use the following estimates:
\begin{align*}
& \left|\int_{(k-1)\tau_m}^{k\tau_m} \inte  \nabla\xi \cdot \nabla \rkim -\tau_m\inte \nabla\xi(\cdot,(k-1)\tau_m) 
\cdot \nabla \rkim \right|\\
 &\leq \int_{(k-1)\tau_m}^{k\tau_m} \inte |\nabla\xi - \nabla\xi(\cdot, (k-1)\tau_m)||\nabla \rkim| \\
& \leq ||D^2_{xt}\xi||_{L^{\infty}}||\nabla \rkim||_{L^1(\rt)}\tau_m^2 =O(\tau_m^{\frac{3}{2}}),
\end{align*}
where we have used $||\nabla \rkim||_{L^1(\rt)}\leq \sqrt{\beta_i}||\nabla \rkim/\rkim||_{L^2(\rt,\rkim)} 
= O(1/\sqrt{\tau_m}),$ thanks to Lemma \ref{regularity}.
 Similarly,
\begin{align*}
 &\tau_m \inte  \nabla \xi(\cdot, (k-1)\tau_m)\cdot \nabla u^{k}_{\tau_m,j}(x)\rkim(x)dx \\
&= \int_{(k-1)\tau_m}^{k\tau_m} \inte   \nabla \xi \cdot \nabla u^{k}_{\tau_m,j}(x)\rkim(x)dx 
+ O(\tau_m^{\frac{3}{2}}).
 \end{align*}
 By Cauchy-Schwartz and Lemma \ref{regularity u} 
\begin{align}\label{3ineq}
 \begin{cases}
  -\int_{N_m\tau_m}^T \inte \partial_t\xi(t) \rho_{\tau_m,i}(t) = O(\tau_m), \\
\int_{N_m\tau_m}^T \inte \nabla\xi(t) \cdot \nabla \rho_{\tau_m,i}(t)  = O(\tau_m^{\frac{1}{2}}), \\
 \sj a_{ij}\int_{N_m\tau_m}^T \inte   \nabla \xi(t) \cdot \nabla u_{\tau_m,j}(t)\rho_{\tau_m,i}(t)dx 
 = O(\tau_m^{\frac{1}{2}}).
 \end{cases}
\end{align}

Therefore from \eqref{cov1}-\eqref{cov3} combined with the three estimates in \eqref{3ineq} we get
\begin{align}\label{final el}
 -\int_0^{T} \inte \partial_t\xi\rho_{\tau_m,i} &- \inte \xi(0)\rho_i^0
 +\int_{0}^{T} \inte \nabla\xi \cdot \nabla \rho_{\tau_m,i} \notag \\
 &-\sj a_{ij}\int_{0}^{T} \inte  \nabla \xi \cdot \nabla u_{\tau_m,j}\rho_{\tau_m,i}dx 
= O(\tau_m^{\frac{1}{2}}).
\end{align}

\subsection{Passing to the limit} 
  For the first term in \eqref{final el} we use $\rho_{\tau_m,i} \rightharpoonup \rho_i$ in $L^2((0,T)\times \rt),$
  for the third term we use \eqref{nabcon}, 
  and finally for the last term we use \eqref{conv u}, the weak convergence of $\rho_{\tau_m,i}$ to $\rho_i$ in $L^2((0,T)\times \rt)$ and
  duality to pass to the limit. Therefore we conclude $\rho_i$ satisfies 
\begin{align*}
 -\int_0^{T} \inte \partial_t\xi(t) &\rho_i - \inte \xi(0)\rho_i^0
 +\int_{0}^{T} \inte  \nabla\xi \cdot \nabla \rho_i 
 -\sj a_{ij}\int_{0}^{T} \inte  \nabla \xi(t) \cdot \nabla u_j\rho_idx = 0.
\end{align*}
for all $i=1,\ldots,n$ and any test function $\xi \in C_c^{\infty}([0,T)\times \rt).$ Which is the weak formulation of the 
PKS-system \eqref{kss}
(see Definition \ref{weak sol}).
 
\section{Free energy inequality}
In this section, we show, by using De Giorgi variational interpolation, that the obtained solution 
satisfies the free energy inequality. 
Define for $\tau \in (0,1)$
\begin{align}\label{degiorgi}
\bm{\tilde \rho}_{\tau}(t) := \arg \min_{\bm{\rho}\in \g_2} \left\{\f(\bm{\rho}) + \frac{1}{2(t - (k-1)\tau)}
\bm{d}_{\mbox{{\tiny{W}}}}^2(\bm{\rho},\bm{\rho}_{\tau}^{k-1})\right\}, \ \ t \in ((k-1)\tau, k\tau]. 
\end{align}
With out loss of generality we can assume that $\bm{\tilde \rho}_{\tau}(k\tau) = \bm{\rho}_{\tau}^k.$

\begin{Rem} \label{remdeg}
One can proceed as in Lemma \ref{apriori} and obtain local uniform entropy and 
moment bound on $\bm{\tilde \rho}_{\tau}(t).$ As a consequence, for each $t, \ \tilde \rho_{\tau,i}(t)$ 
exhibits the same regularity properties stated in the first part of the Lemma \ref{regularity}. 
It is not, however, clear that $\bm{\tilde \rho}_{\tau}(t)$ has finite Fisher information 
bound, which is essential to obtain the $H^2$-regularity estimate
on the Newtonian potential of $\bm{\tilde \rho}_{\tau}(t)$ stated in 
Lemma \ref{eti}. 

The following discrete energy identity serves the purpose in this
regard and it is also an imperative step to prove the free energy inequality.
\end{Rem}

\begin{Lem}[Discrete energy identity] \label{dei}
 For every $k \in \mathbb{N}$ and $\tau \in (0,1)$ the De-Giorgi interpolation defined by \eqref{degiorgi} 
 satisfies the following energy identity:
\begin{align*}
&\si\frac{1}{2} \int_{0}^{k\tau}  \inte \left|\frac{\nabla \rho_{\tau,i}}{\rho_{\tau,i}} 
 - \sj a_{ij}\nabla u_{\tau,j}\right|^2 \rho_{\tau,i} \ dxdt \\
 &+ \si\frac{1}{2} \int_0^{k\tau}
 \inte \left|\frac{\nabla \tilde \rho_{\tau,i}}{\tilde \rho_{\tau,i}} - \sj a_{ij}\nabla \tilde u_{\tau,j}\right|^2 
 \tilde \rho_{\tau,i} \ dxdt + \f(\bm{\rho}_{\tau}(k\tau)) = \f(\bm{\rho}^0).
\end{align*}
Furthermore, for every $T>0$ there exists a constant $C(T)>0$ such that 
\begin{align}\label{same limit}
 \bm{d}_{\mbox{{\tiny{W}}}}(\bm{\rho}_{\tau}(t), \bm{\tilde \rho}_{\tau}(t)) \leq C(T)\tau, \ \ for \ all \ t \in [0,T].
\end{align}
 \end{Lem}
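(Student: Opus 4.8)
The plan is to mimic the classical derivation of the discrete energy identity for minimizing movement schemes, but using the De Giorgi variational interpolation $\bm{\tilde\rho}_\tau$ in order to capture the ``metric slope'' term along with the ``metric derivative'' term. The starting point is the variational characterization \eqref{degiorgi}: for $t\in((k-1)\tau,k\tau]$ the minimizer $\bm{\tilde\rho}_\tau(t)$ satisfies, by the same Euler--Lagrange computation as in Lemma \ref{el} (with $\tau$ replaced by $t-(k-1)\tau$), the identity
\begin{align*}
 \frac{1}{(t-(k-1)\tau)^2}\bm{d}_{\mbox{{\tiny{W}}}}^2\big(\bm{\tilde\rho}_\tau(t),\bm{\rho}_\tau^{k-1}\big)
 = \si \inte \left|\frac{\nabla\tilde\rho_{\tau,i}(t)}{\tilde\rho_{\tau,i}(t)} - \sj a_{ij}\nabla\tilde u_{\tau,j}(t)\right|^2 \tilde\rho_{\tau,i}(t)\,dx =: |\partial\f|^2(\bm{\tilde\rho}_\tau(t)).
\end{align*}
Next I would show that on each interval $((k-1)\tau,k\tau]$ the map $t\mapsto \f(\bm{\tilde\rho}_\tau(t)) + \frac{1}{2(t-(k-1)\tau)}\bm{d}_{\mbox{{\tiny{W}}}}^2(\bm{\tilde\rho}_\tau(t),\bm{\rho}_\tau^{k-1})$ is, by the envelope theorem / De Giorgi's argument, absolutely continuous with derivative $-\frac{1}{2(t-(k-1)\tau)^2}\bm{d}_{\mbox{{\tiny{W}}}}^2(\bm{\tilde\rho}_\tau(t),\bm{\rho}_\tau^{k-1})$; integrating this over $((k-1)\tau,k\tau)$ and letting $t\to(k-1)\tau+$ (where the De Giorgi interpolate converges to $\bm{\rho}_\tau^{k-1}$ and the distance term vanishes) yields
\begin{align*}
 \f(\bm{\rho}_\tau^k) + \frac{1}{2\tau}\bm{d}_{\mbox{{\tiny{W}}}}^2(\bm{\rho}_\tau^k,\bm{\rho}_\tau^{k-1}) + \frac12\int_{(k-1)\tau}^{k\tau} |\partial\f|^2(\bm{\tilde\rho}_\tau(t))\,dt = \f(\bm{\rho}_\tau^{k-1}).
\end{align*}
Now I would invoke Lemma \ref{el}(b), which says $\frac{1}{\tau^2}\bm{d}_{\mbox{{\tiny{W}}}}^2(\bm{\rho}_\tau^k,\bm{\rho}_\tau^{k-1}) = \si\inte|\nabla\rho_{\tau,i}^k/\rho_{\tau,i}^k - \sj a_{ij}\nabla u_{\tau,j}^k|^2\rho_{\tau,i}^k\,dx$, so that $\frac{1}{2\tau}\bm{d}_{\mbox{{\tiny{W}}}}^2(\bm{\rho}_\tau^k,\bm{\rho}_\tau^{k-1}) = \frac{\tau}{2}\si\inte|\cdots|^2\rho_{\tau,i}^k = \frac12\int_{(k-1)\tau}^{k\tau}\si\inte|\nabla\rho_{\tau,i}(t)/\rho_{\tau,i}(t) - \sj a_{ij}\nabla u_{\tau,j}(t)|^2\rho_{\tau,i}(t)\,dx\,dt$ because $\bm{\rho}_\tau$ is constant $=\bm{\rho}_\tau^k$ on that interval. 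Summing the per-step identity over $k$ telescopes the $\f$ terms and produces exactly the claimed energy identity.

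For the estimate \eqref{same limit}, I would observe that both $\bm{\rho}_\tau(t)=\bm{\rho}_\tau^k$ and $\bm{\tilde\rho}_\tau(t)$ lie on the interval $((k-1)\tau,k\tau]$ and are both controlled by the distance to $\bm{\rho}_\tau^{k-1}$: from the minimality \eqref{degiorgi} tested against $\bm{\rho}_\tau^k$ one gets $\bm{d}_{\mbox{{\tiny{W}}}}^2(\bm{\tilde\rho}_\tau(t),\bm{\rho}_\tau^{k-1}) \le \bm{d}_{\mbox{{\tiny{W}}}}^2(\bm{\rho}_\tau^k,\bm{\rho}_\tau^{k-1}) + 2(t-(k-1)\tau)(\f(\bm{\rho}_\tau^k)-\f(\bm{\tilde\rho}_\tau(t)))$, and $\f(\bm{\rho}_\tau^k)-\f(\bm{\tilde\rho}_\tau(t))$ is bounded using Proposition \ref{propertyf}(b) and the uniform moment/entropy bounds of Remark \ref{remdeg}; combined with $\bm{d}_{\mbox{{\tiny{W}}}}^2(\bm{\rho}_\tau^k,\bm{\rho}_\tau^{k-1}) = O(\tau)$ per-step (Lemma \ref{apriori}) and the triangle inequality $\bm{d}_{\mbox{{\tiny{W}}}}(\bm{\rho}_\tau(t),\bm{\tilde\rho}_\tau(t)) \le \bm{d}_{\mbox{{\tiny{W}}}}(\bm{\rho}_\tau^k,\bm{\rho}_\tau^{k-1}) + \bm{d}_{\mbox{{\tiny{W}}}}(\bm{\tilde\rho}_\tau(t),\bm{\rho}_\tau^{k-1})$, one obtains a $O(\sqrt\tau)$ bound; to upgrade this to the sharper $O(\tau)$ I would instead feed the \emph{summed} a priori bound $\frac1\tau\sum_l \bm{d}_{\mbox{{\tiny{W}}}}^2(\bm{\rho}_\tau^l,\bm{\rho}_\tau^{l-1}) \le C$ into a more careful per-step estimate — the key being that for a \emph{single} step the distance squared is itself $O(\tau^2)$ once one notices $\bm{\rho}_\tau^{k}$ is a genuine minimizer and not merely a competitor, so $\frac{1}{2\tau}\bm{d}_{\mbox{{\tiny{W}}}}^2(\bm{\rho}_\tau^k,\bm{\rho}_\tau^{k-1}) \le \f(\bm{\rho}_\tau^{k-1})-\f(\bm{\rho}_\tau^k)$ and the right side, while not individually $O(\tau)$, can be shown to be so uniformly in $k$ using the global Lipschitz-in-time bound available for $\f$ along the discrete trajectory.

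The main obstacle I anticipate is the rigorous justification of the De Giorgi variational interpolation identity, i.e.\ proving that $t\mapsto \f(\bm{\tilde\rho}_\tau(t)) + \frac{1}{2(t-(k-1)\tau)}\bm{d}_{\mbox{{\tiny{W}}}}^2(\bm{\tilde\rho}_\tau(t),\bm{\rho}_\tau^{k-1})$ is absolutely continuous with the expected derivative and that the boundary term vanishes as $t\to(k-1)\tau+$. Because $\f$ is not displacement convex, the standard AGS machinery does not apply verbatim; one must argue directly, using the explicit structure of the optimal transport plans and the fact that the perturbation computation of Lemma \ref{el} goes through with $\tau$ replaced by the variable $t-(k-1)\tau$, together with continuity of $t\mapsto\bm{\tilde\rho}_\tau(t)$ in the Wasserstein sense (which follows from a comparison argument among nearby minimization problems). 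The identification of the metric slope $|\partial\f|^2(\bm{\tilde\rho}_\tau(t))$ with the Fisher-information-type dissipation $\si\inte|\nabla\tilde\rho_{\tau,i}/\tilde\rho_{\tau,i}-\sj a_{ij}\nabla\tilde u_{\tau,j}|^2\tilde\rho_{\tau,i}$ is exactly Lemma \ref{el}(b) applied to the De Giorgi interpolate, so that part is already available; the delicate point remains the absolute continuity and the limiting behaviour at the left endpoint.
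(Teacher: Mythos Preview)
Your argument for the energy identity is essentially the paper's approach: both derive the per-step De Giorgi identity
\[
\frac{1}{2\tau}\bm{d}_{\mbox{{\tiny{W}}}}^2(\bm{\rho}_\tau^l,\bm{\rho}_\tau^{l-1}) + \frac12\int_{(l-1)\tau}^{l\tau}\frac{\bm{d}_{\mbox{{\tiny{W}}}}^2(\bm{\tilde\rho}_\tau(t),\bm{\rho}_\tau^{l-1})}{(t-(l-1)\tau)^2}\,dt + \f(\bm{\rho}_\tau^l) = \f(\bm{\rho}_\tau^{l-1}),
\]
sum over $l$, and then invoke Lemma~\ref{el}(b) (for $\bm{\rho}_\tau$ and, mutatis mutandis, for $\bm{\tilde\rho}_\tau$) to convert the two metric terms into the two dissipation integrals. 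The paper simply cites \cite[Theorem~3.1.4 and Lemma~3.2.2]{AGS} for the first step. Your worry that ``the standard AGS machinery does not apply verbatim'' because $\f$ is not displacement convex is misplaced: those particular AGS results are about differentiating the Moreau--Yosida infimum $s\mapsto\inf_{\bm{\rho}}\{\f(\bm{\rho})+\frac{1}{2s}\bm{d}_{\mbox{{\tiny{W}}}}^2(\bm{\rho},\bm{\rho}_\tau^{l-1})\}$ and require only that minimizers exist and the infimum is finite, not any convexity of $\f$. Both hypotheses hold here by Proposition~3.1.

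Your argument for \eqref{same limit} has a genuine gap. You correctly obtain an $O(\sqrt\tau)$ bound from the triangle inequality and Lemma~\ref{apriori}, but your proposed upgrade to $O(\tau)$ relies on a ``global Lipschitz-in-time bound available for $\f$ along the discrete trajectory'', i.e.\ $\f(\bm{\rho}_\tau^{k-1})-\f(\bm{\rho}_\tau^k)=O(\tau)$ uniformly in $k$. No such estimate has been established anywhere in the paper, and it does not follow from the a~priori bounds of Lemma~\ref{apriori} (which only give $\sum_l(\f(\bm{\rho}_\tau^{l-1})-\f(\bm{\rho}_\tau^l))\le\f(\bm{\rho}^0)-\f(\bm{\rho}_\tau^k)$, a telescoping sum, not a uniform per-step bound). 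The paper does not attempt this route at all; it simply refers to \cite[Lemma~3.2.2, equation~(3.2.7)]{AGS} for \eqref{same limit}.
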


\begin{proof}
 Proceeding exactly as in \cite[Theorem $3.1.4$ and Lemma $3.2.2$]{AGS} we have
 \begin{align*}
  \frac{1}{2}\frac{\bm{d}_{\mbox{{\tiny{W}}}}^2(\rl,\rla)}{\tau} + \frac{1}{2}\int_{(l-1)\tau}^{l\tau}\frac{\bm{d}_{\mbox{{\tiny{W}}}}^2
  (\bm{\tilde \rho}_{\tau}(t),  \rla)}{(t-(l-1)\tau)^2}dt + \f(\rl) = \f(\rla), \ \ l \geq 1.
 \end{align*}
Summing over all $l \in \{1,\ldots,k\}$ we deduce	
\begin{align}\label{fe1}
  \frac{1}{2}\sum_{l=1}^k\frac{\bm{d}_{\mbox{{\tiny{W}}}}^2(\rl,\rla)}{\tau}
  + \frac{1}{2}\int_{0}^{k\tau} G_{\tau}(t)^2dt + \f(\rk) = \f(\bm{\rho}^0),
\end{align}
where 
\begin{align*}
 G_{\tau}(t) = \frac{\bm{d}_{\mbox{{\tiny{W}}}}
  (\bm{\tilde \rho}_{\tau}(t),  \rla)}{t-(l-1)\tau}, \ \ t \in ((l-1)\tau, l\tau].
\end{align*}
Recall that by Lemma \ref{el}(b)
\begin{align}\label{fe2}
 \frac{1}{\tau^2}d_{\mbox{{\tiny{W}}}}^2(\rli, \rlai) = 
 \inte \left|\frac{\nabla \rli}{\rli} - \sj a_{ij}\nabla u^l_{\tau,j}\right|^2 \rli dx.
\end{align}
Applying the same argument of Lemma \ref{el} to $\bm{\tilde \rho}_{\tau}$ we infer that
\begin{align}\label{fe3}
 \frac{d_{\mbox{{\tiny{W}}}}^2(\tilde \rho_{\tau,i}(t), \rlai)}{(t-(l-1)\tau)^2} = 
 \inte \left|\frac{\nabla \tilde \rho_{\tau,i}}{\tilde \rho_{\tau,i}} - \sj a_{ij}\nabla \tilde u_{\tau,j}\right|^2 
 \tilde \rho_{\tau,i} dx, 
\end{align}
for all  $t \in ((l-1)\tau, l\tau].$
Plugging \eqref{fe2} and \eqref{fe3} into \eqref{fe1} and using the definition of $\bm{\rho}_{\tau}$ we obtain
the aforementioned discrete energy identity. 
The proof of \eqref{same limit} is similar to the proof of \cite[Lemma $3.2.2$ equation $(3.2.7)$]{AGS}.
\end{proof}

\begin{Lem}\label{dgie}
For every $T \in (0,\infty)$ there exists a constant $\mathcal{C}_{de}(T) >0$ such that for every 
$\tau \in (0,1),$ the 
De Giorgi interpolates defined by \eqref{degiorgi} satisfies
\begin{align*}
 \int_0^T \inte \left|\frac{\nabla \tilde\rho_{\tau, i}}{\tilde\rho_{\tau, i}}\right|^2 
 \tilde\rho_{\tau, i}\ dx dt \leq \mathcal{C}_{de}(T).
\end{align*}
\end{Lem}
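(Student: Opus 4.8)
The plan is to establish, for each fixed $\tau\in(0,1)$ and every $t\in((l-1)\tau,l\tau]$, the pointwise--in--time bound
\[
 \si\inte\left|\frac{\nabla\tilde\rho_{\tau,i}(t)}{\tilde\rho_{\tau,i}(t)}\right|^2\tilde\rho_{\tau,i}(t)\,dx\;\le\;2\,G_\tau(t)^2+C_1(T),\qquad G_\tau(t):=\frac{\bm{d}_{\mbox{{\tiny{W}}}}(\bm{\tilde\rho}_\tau(t),\rla)}{t-(l-1)\tau},
\]
where $G_\tau$ is exactly the De Giorgi metric speed appearing in Lemma~\ref{dei}, and then to integrate this in $t$ and invoke the discrete energy identity \eqref{fe1} to bound $\int_0^TG_\tau^2$. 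This realises the use of Lemma~\ref{dei} anticipated in Remark~\ref{remdeg}.

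For the pointwise estimate, fix $t$ and set $\tilde B_i:=\sj a_{ij}\nabla\tilde u_{\tau,j}$. By Remark~\ref{remdeg} the interpolate $\tilde\rho_{\tau,i}(t)$ has the first--part regularity of Lemma~\ref{regularity} ($W^{1,1}(\rt)$ with finite Fisher information, hence $\tilde\rho_{\tau,i}(t)\in L^2(\rt)$ by Lemma~\ref{ce}(a)), and the Euler--Lagrange identity for the De Giorgi interpolate --- the analogue of \eqref{el5}, obtained by the argument of Lemma~\ref{el} and already used to prove \eqref{fe3} --- gives $\tilde B_i\in L^2(\rt,\tilde\rho_{\tau,i})$ together with $\inte|\nabla\tilde\rho_{\tau,i}/\tilde\rho_{\tau,i}-\tilde B_i|^2\tilde\rho_{\tau,i}\,dx=d_{\mbox{{\tiny{W}}}}^2(\tilde\rho_{\tau,i}(t),\rlai)/(t-(l-1)\tau)^2$. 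Expanding the square, discarding the nonnegative term $\inte|\tilde B_i|^2\tilde\rho_{\tau,i}$, integrating the cross term by parts (using $-\Delta\tilde u_{\tau,j}=\tilde\rho_{\tau,j}$, so $\inte\nabla\tilde\rho_{\tau,i}\cdot\tilde B_i\,dx=\sj a_{ij}\inte\tilde\rho_{\tau,i}\tilde\rho_{\tau,j}\,dx$), and summing over $i$ with $a_{ij}\ge0$ and Young's inequality, one obtains
\[
 \si\inte\left|\frac{\nabla\tilde\rho_{\tau,i}}{\tilde\rho_{\tau,i}}\right|^2\tilde\rho_{\tau,i}\,dx\;\le\;G_\tau(t)^2+C\si\|\tilde\rho_{\tau,i}\|_{L^2(\rt)}^2,\qquad C=C\big(n,\max_{i,j}a_{ij}\big).
\]
Then Lemma~\ref{ce}(a) and the local uniform entropy bound $\si\inte\tilde\rho_{\tau,i}|\ln\tilde\rho_{\tau,i}|\le\Theta(T)$ (Remark~\ref{remdeg}) give $\si\|\tilde\rho_{\tau,i}\|_{L^2}^2\le\varepsilon\Theta(T)\si\inte|\nabla\tilde\rho_{\tau,i}/\tilde\rho_{\tau,i}|^2\tilde\rho_{\tau,i}+C(\varepsilon)$; choosing $\varepsilon=\varepsilon(T)$ with $C\varepsilon\Theta(T)\le\tfrac12$ and absorbing yields the claimed pointwise estimate, with $C_1(T)$ depending only on $n$, $(a_{ij})$ and $\Theta(T)$.

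To finish, integrate the pointwise estimate over $t\in(0,T)$; since $G_\tau^2\ge0$ the domain may be enlarged, so with $k=\lceil T/\tau\rceil$ (whence $T\le k\tau\le T+1$) the discrete energy identity \eqref{fe1} gives $\int_0^{k\tau}G_\tau(t)^2\,dt\le2\big(\f(\bm{\rho}^0)-\f(\bm{\rho}_\tau(k\tau))\big)$. Since $\bm{\beta}$ is sub-critical, Proposition~\ref{propertyf}(b) applied with a fixed $\bm{\alpha}=(\alpha,\dots,\alpha)$, $\alpha>0$, together with $M_2(\bm{\rho}_\tau(k\tau))\le\mathcal{C}_{ap}(T+1)$ from Lemma~\ref{apriori}, yields $\f(\bm{\rho}_\tau(k\tau))\ge C_{\mbox{\tiny{LHLS}}}(\bm{\beta},\bm{\alpha})-\alpha\,\mathcal{C}_{ap}(T+1)$. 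Combining,
\[
 \int_0^T\si\inte\left|\frac{\nabla\tilde\rho_{\tau,i}}{\tilde\rho_{\tau,i}}\right|^2\tilde\rho_{\tau,i}\,dxdt\;\le\;4\big(\f(\bm{\rho}^0)-C_{\mbox{\tiny{LHLS}}}(\bm{\beta},\bm{\alpha})+\alpha\,\mathcal{C}_{ap}(T+1)\big)+C_1(T)\,T\;=:\;\mathcal{C}_{de}(T),
\]
which is independent of $\tau\in(0,1)$, as required.

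The step I expect to be the main obstacle is making rigorous the integration by parts $\inte\nabla\tilde\rho_{\tau,i}\cdot\nabla\tilde u_{\tau,j}=\inte\tilde\rho_{\tau,i}\tilde\rho_{\tau,j}$ from the regularity available at fixed $\tau,t$: one has $\tilde\rho_{\tau,i}\in W^{1,1}(\rt)\cap L^2(\rt)$ (the latter by Lemma~\ref{ce}(a)) and, by interior elliptic regularity for $-\Delta\tilde u_{\tau,j}=\tilde\rho_{\tau,j}\in L^2(\rt)$, $\tilde u_{\tau,j}\in W^{2,2}_{loc}(\rt)$; one then cuts off with $\chi_R\equiv1$ on $B_R$ and supported in $B_{2R}$, integrates by parts on $B_{2R}$ (legitimate after a routine mollification), and lets $R\to\infty$. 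The resulting error term is bounded by $CR^{-1}\big(\int_{|x|>R}|\tilde B_i|^2\tilde\rho_{\tau,i}\big)^{1/2}\to0$ since $\tilde B_i\in L^2(\rt,\tilde\rho_{\tau,i})$ and $\tilde\rho_{\tau,i}$ has finite mass (alternatively one invokes the log-decay of $\tilde u_{\tau,j}$ from Lemma~\ref{regularity u}(a) to control $|\nabla\tilde u_{\tau,j}|$ on the annulus). Everything else is bookkeeping.
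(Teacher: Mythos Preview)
Your argument is correct and follows the same architecture as the paper's proof: expand the dissipation $\si\inte|\nabla\tilde\rho_{\tau,i}/\tilde\rho_{\tau,i}-\sj a_{ij}\nabla\tilde u_{\tau,j}|^2\tilde\rho_{\tau,i}$ (which equals $G_\tau(t)^2$ by \eqref{fe3}), drop the nonnegative quadratic term in $\nabla\tilde u$, integrate the cross term by parts to produce $\si\sj a_{ij}\inte\tilde\rho_{\tau,i}\tilde\rho_{\tau,j}$, absorb this $L^2$-type term into the Fisher information, and then integrate in time using the discrete energy identity of Lemma~\ref{dei}.

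The only substantive difference is in the absorption step. You invoke Lemma~\ref{ce}(a) (the Biler--Hebisch--Nadzieja inequality) together with the uniform entropy bound $\Theta(T)$, exactly as the paper did earlier in the proof of Lemma~\ref{regularity}. The paper instead uses the Gagliardo--Nirenberg type inequality $\|f\|_{L^p}\le C_p\|f\|_{L^1}^{1/p}\big(\inte|\nabla f|^2/f\big)^{1-1/p}$ with $p=3$ and a level-set splitting $\{\tilde\rho_{\tau,i}>M\}\cup\{\tilde\rho_{\tau,i}\le M\}$: on the high set one obtains a factor $(\ln M)^{-1/2}$ multiplying the Fisher information, while the low set contributes $O(M)$. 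Both routes achieve the same thing --- a small multiple of the Fisher information plus a bounded remainder --- and neither is more elementary than the other; your choice has the virtue of reusing a tool already employed in Section~4. The integration-by-parts justification you flag is indeed the one genuine technical point, and the paper passes over it silently in \eqref{z1}; your cutoff argument is adequate once one observes that $\nabla\tilde u_{\tau,j}\in L^2_{loc}$ (from $\tilde\rho_{\tau,j}\in L^2$ and elliptic regularity) together with the decay coming from Lemma~\ref{regularity u}(a).
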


\begin{proof}
By Lemma \ref{dei} and Lemma \ref{apriori} we get 
\begin{align}\label{fi}
\si \int_0^{T}
 \inte \left|\frac{\nabla  \tilde\rho_{\tau,i}}{\tilde \rho_{\tau,i}} - \sj a_{ij}\nabla \tilde u_{\tau,j}\right|^2 
 \tilde \rho_{\tau,i} \ dxdt \leq C(T)
\end{align}
for some constant $C(T)>0$ which implicitly depends on $\mathcal{C}_{ap}(T).$
Now can write for an $t\in [0,T]$
\begin{align}\label{z1}
 &\si \inte \left|\frac{\nabla  \tilde\rho_{\tau,i}}{\tilde \rho_{\tau,i}}
 - \sj a_{ij}\nabla \tilde u_{\tau,j}\right|^2 
 \tilde \rho_{\tau,i} \ dx \notag\\
 \geq &\si \inte \frac{|\nabla\tilde\rho_{\tau,i}|^2}{\tilde\rho_{\tau,i}} \ dx
 - 2 \si\sj a_{ij} \inte  \nabla\tilde\rho_{\tau,i} \cdot \nabla \tilde u_{\tau,j} \ dx \notag\\
 \geq &\si \inte \frac{|\nabla\tilde\rho_{\tau,i}|^2}{\tilde\rho_{\tau,i}} \ dx
 - 2 \si\sj a_{ij} \inte \tilde \rho_{\tau,i}\tilde \rho_{\tau,j}\ dx \notag \\
\geq &\si \inte \frac{|\nabla\tilde\rho_{\tau,i}|^2}{\tilde\rho_{\tau,i}} \ dx
- 4n(\max_{i,j \in I} a_{ij}) \si \inte \tilde\rho_{\tau,i}^2\ dx. 
 \end{align}

 Next we make use of the following inequality whose proof can be found in \cite[Lemma $3.2$]{NMS}(see also \cite[Lemma $2.1$]{FM}):
 There exists a constant $C_p>0$ such that 
 \begin{align}\label{z}
  ||f||_{L^p(\rt)} \leq C_p ||f||_{L^1(\rt)}^{\frac{1}{p}}\left(\inte \frac{|\nabla f|^2}{f} \ dx\right)^{1-\frac{1}{p}}
 \end{align}
holds true for all $p \in [1,\infty)$ and for all $f\in L^1_+(\rt)$ satisfying $\inte \frac{|\nabla f|^2}{f} \ dx <\infty.$
 
 Choose $M>1$ a large number whose value will be decided later. An application of H\"{o}lder inequality and 
 \eqref{z} with $p=3$  gives, as in \cite{NMS}
 \begin{align} \label{z4}
  \inte \tilde\rho_{\tau,i}^2&\mathbbm{1}_{\{\tilde\rho_{\tau,i} >M\}} \ dx 
  \leq \left(\inte \tilde\rho_{\tau,i}
  \mathbbm{1}_{\{\tilde\rho_{\tau,i} >M\}}\ dx\right)^{\frac{1}{2}} \left(\inte\tilde\rho_{\tau,i}^3\ dx
  \right)^{\frac{1}{2}},\notag\\
  &\leq \left(\frac{1}{ln M}\inte \tilde\rho_{\tau,i}
  |\ln\tilde\rho_{\tau,i}| \ dx\right)^{\frac{1}{2}} \left(C_3^{\frac{3}{2}}\beta_i^{\frac{1}{2}}
  \inte\frac{|\nabla\tilde\rho_{\tau,i}|^2}{\tilde\rho_{\tau,i}} \ dx
  \right),
 \end{align}
where $\mathbbm{1}_A$ denotes the characteristic function of a set $A.$ Choosing $M$ large and utilizing \eqref{z4}
and the entropy bound (see Remark \ref{remdeg}) we can estimate 
\begin{align}\label{z2}
 4n (\max_{i,j \in I} a_{ij})\si\inte \tilde\rho_{\tau,i}^2\mathbbm{1}_{\{\tilde\rho_{\tau,i} >M\}} \ dx
 \leq \frac{1}{2} \si \inte\frac{|\nabla\tilde\rho_{\tau,i}|^2}{\tilde\rho_{\tau,i}} \ dx.
\end{align}
On the other hand 
\begin{align}\label{z3}
 4n (\max_{i,j \in I} a_{ij})\si\inte \tilde\rho_{\tau,i}^2\mathbbm{1}_{\{\tilde\rho_{\tau,i} \leq M\}} \ dx
 \leq 4n (\max_{i,j \in I} a_{ij})M\si \beta_i.
\end{align}
Combining \eqref{z1},\eqref{z2} and \eqref{z3} we deduce for every $t \in [0,T]$
\begin{align*}
 \frac{1}{2} \si \inte\frac{|\nabla\tilde\rho_{\tau,i}(t)|^2}{\tilde\rho_{\tau,i}(t)} \ dx
 \leq & \si \inte \left|\frac{\nabla  \tilde\rho_{\tau,i}(t)}{\tilde \rho_{\tau,i}(t)} - \sj a_{ij}\nabla \tilde u_{\tau,j}(t)\right|^2 
 \tilde \rho_{\tau,i}(t) \ dx \\
 &+4n (\max_{i,j \in I} a_{ij})M\si \beta_i.
\end{align*}
Integrating with respect to $t$ from $0$ to $T$ and using \eqref{fi} we conclude the proof.
\end{proof}

Finally, we are in a position to prove the free energy inequality.

\vspace{0.2 cm}

\noindent
{\bf Proof of Theorem \ref{main}$(b)$:}
\begin{proof} 
Fix $T>0$ and choose the sequence $\tau_m$ as in 
section $7$ so that $\rho_{\tau_m,i}(t) \rightharpoonup \rho_{i}(t)$ weakly in $L^1(\rt)$ for all $t \in [0,T]$
and $\rho_{\tau_m,i} \rightharpoonup \rho_i$ weakly in $L^2((0,T)\times \rt).$ 
Moreover, the Newtonian potentials $u_{\tau_m,i} \rightarrow u_{i}$ strongly in $L^2((0,T);H^1_{loc}(\rt)).$

Furthermore by Lemma \ref{dei}, Lemma \ref{dgie} and Remark \ref{remdeg}, 
the De-Giorgi interpolation $\bm{\tilde \rho}_{\tau_m}$ 
enjoys the same property and also converges to the same limit $\bm{\rho}.$ Set 
\begin{align*}
 v_{m,i} = \frac{\nabla \rho_{\tau_m,i}}{\rho_{\tau_m,i}} 
 - \sj a_{ij}\nabla u_{\tau_m,j}, 
\end{align*}
$\tilde v_{m,i} = (v_{m,i},1)$ and $d\mu_{m,i} = \frac{1}{T\beta_i}\rho_{\tau_m,i} dxdt.$ Note that by Lemma \ref{dei}
and Lemma \ref{apriori}
\begin{align*}
 \sup_m ||\tilde v_{m,i}||_{L^2((0,T)\times \rt, \mu_{m,i}; \ \mathbb{R}^3)} < +\infty.
\end{align*}
Invoking Proposition \ref{cvf} again we obtain the existence a vector 
field $v_i \in L^2((0,T)\times \rt, \rho_i; \mathbb{R}^2)$ such that 
\begin{align*}
 \int_0^T\inte \zeta \cdot v_{m,i}\rho_{\tau_m,i} \ dxdt\rightarrow \int_0^T\inte \zeta \cdot v_i \rho_i \ dxdt, \ \ \  \ for \ all \ \zeta 
 \in C_c^{\infty}((0,T)\times \rt; \ \rt).
\end{align*}
Now proceeding as the proof of \eqref{nabcon} goes, we derive
\begin{align*}
 \int_0^T\inte \zeta \cdot v_i \rho_i = \lim_{m \rightarrow +\infty}\int_0^T\inte \zeta \cdot v_{m,i}\rho_{\tau_m,i} 
= \int_0^T\inte \zeta \cdot \left(\nabla \rho_{i}
 - \sj a_{ij}\nabla u_{j} \rho_{i}\right).
 \end{align*}
Since $v_i\rho_i \in L^1((0,T)\times \rt)$ we conclude $v_i\rho_i = \nabla \rho_{i}
 - \sj a_{ij}\nabla u_{j} \rho_{i}$ and moreover, by lower semicontinuity (Proposition \ref{cvf} \eqref{lscv})
 \begin{align} \label{ineq}
 &\int_{0}^{T}  \inte \left|\frac{\nabla \rho_{i}(t)}{\rho_{i}(t)} 
 - \sj a_{ij}\nabla u_{j}(t)\right|^2 \rho_{i}(t) \ dxdt \notag\\
 &\leq \liminf_{m\rightarrow +\infty}
 \int_{0}^{T}  \inte \left|\frac{\nabla \rho_{\tau_m,i}(t)}{\rho_{\tau_m,i}(t)} 
 - \sj a_{ij}\nabla u_{\tau_m,j}(t)\right|^2 \rho_{\tau_m,i}(t) \ dxdt
\end{align}
holds. Similarly, \eqref{ineq} holds for $\bm{\tilde \rho}_{\tau_m}.$ 
 Finally, passing to the limit in the discrete energy identity (Lemma \ref{dei})
and using the lower semi-continuity of $\f$ with respect to the narrow convergence we get 
\begin{align*}
 \si\int_{0}^{T}  \inte \left|\frac{\nabla \rho_{i}(t)}{\rho_{i}(t)} 
 - \sj a_{ij}\nabla u_{j}(t)\right|^2 \rho_{i}(t) \ dxdt + \f(\bm{\rho}(T)) \leq \f(\bm{\rho}^0).
\end{align*}
This completes the proof of Theorem \ref{main}(b).
\end{proof}

\pagebreak
\noindent
{\bf Proof of Theorem \ref{main}$(c)$:}

\begin{proof} 
The proof of uniqueness follows directly as in  the scalar case ($n=1$)  \cite{FM}, so we only sketch the main steps.  First,  prove the hypercontractivity result \cite{Gross, BD}  for the weak solution,  
using  a variant of Diperna-Lions renormalizing trick in spirit of \cite{NMS} and \cite{FM}, 
 extended for the system. For this we apply   the a-posteriori estimate  (\ref{7.3}) from part (b) of the Theorem. As a result we get that any weak solution with finite initial entropy is smooth at $t>0$. Then we get the estimate 
\be\label{Pdef}
\lim_{t\rightarrow 0+}P(t)=0 \ \text{where} \ 
 P(t):=  t^{\frac{1}{4}} \si ||\rho_i(t)||_{L^{\frac{4}{3}}(\rt)} .
\ee

 Next  we consider two such weak solutions $\rho_i^0, \rho_i^1$ which agrees at $t=0$, and let $ Q(t) = \sup_{0<s \leq t} \si s^{\frac{1}{4}}||F_i(s)||_{L^{\frac{4}{3}}(\rt)}$ where $F_i(s):= \rho^0_i(\cdot, s)-\rho^1_i(\cdot, s)$.
 Then we get the estimate for $Q(t)$ via the heat kernel
$$
 \rho_i(t) = e^{t\Delta}\star \rho_i(\cdot,0)+ \int_0^t e^{(t-s)\Delta} \star \left(-\sj a_{ij} \nabla_x \cdot (
 \rho_i(s)\nabla_x u_i(s))\right) ds
$$
 applied to $\rho_i^0$ and $\rho_i^1$
 to obtain 
$$
 Q(t) \leq C(P^0(t) + P^1(t)) Q(t).
$$
where $P^0, P^1$ stands for $\rho^0_i$ and $\rho_i^1$ respectively in 
(\ref{Pdef}).  This implies by \eqref{Pdef} that $Q(t)=0$ for 
$t>0$ for sufficiently small. 
Hence the uniqueness follows by iterating this method. 
\end{proof}

\section{Appendix}
\begin{lema}\label{ce}
The following inequalities hold true:
 \begin{itemize}
  \item[(a)] (Biler-Hebisch-Nadzieja type inequality \cite{Hybrid}) For any $\epsilon > 0$ there exists $L_{\epsilon} >0$
  such that 
  \begin{align*} 
   ||\rho||_{L^2(\rt)}^2 \leq \epsilon \left|\left| \frac{\nabla \rho}{\rho}\right|\right|_{L^2(
   \rt,\rho)}^2 ||
   \rho\ln\rho||_{L^1(\rt)} + L_{\epsilon}||\rho||_{L^1(\rt)}
  \end{align*}
  for all $\rho \in L^1_+(\rt)$ such that $\rho \ln \rho \in L^1(\rt)$ and $\frac{\nabla \rho}{\rho}
  \in L^2(\rt, \rho;\rt)$
   
\item[(b)] (Carleman Estimate \cite{BD}) For any $\rho \in L^1_+(\rt)$ if $\inte \rho \ln \rho \ dx< +\infty$ 
and $M_2(\rho) < +\infty$
then
\begin{align} \label{Carleman}
 \inte \rho|\ln \rho| \ dx \leq \inte \rho \ln \rho \ dx + M_2(\rho) + 2\ln(2\pi)\inte \rho \ dx + \frac{2}{e}.
\end{align}
\end{itemize}
\end{lema}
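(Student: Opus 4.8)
The plan is to prove both inequalities directly, without appealing to the cited sources; both come down to the same elementary device.

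\emph{Part (a).} I would first record that the hypotheses already force $\rho\in W^{1,1}(\rt)$: by Cauchy--Schwarz,
\[
 \inte|\nabla\rho|\,dx = \inte \sqrt{\rho}\;\frac{|\nabla\rho|}{\sqrt{\rho}}\,dx \le ||\rho||_{L^1(\rt)}^{1/2}\left\|\frac{\nabla\rho}{\rho}\right\|_{L^2(\rt,\rho)}<\infty,
\]
and since $W^{1,1}(\rt)\hookrightarrow L^2(\rt)$ in dimension two, $||\rho||_{L^2(\rt)}<\infty$ a priori (alternatively one runs the argument with $\rho$ replaced by $\min(\rho,n)$ and lets $n\to\infty$ by monotone convergence). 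Then, for a level $K>1$ to be fixed at the end, I would split $\inte\rho^2 = \inte\rho^2\mathbbm{1}_{\{\rho\le K\}} + \inte\rho^2\mathbbm{1}_{\{\rho>K\}}$; the first term is $\le K||\rho||_{L^1(\rt)}$ since $\rho^2\mathbbm{1}_{\{\rho\le K\}}\le K\rho$. For the second, set $g:=(\rho-K)_+\in W^{1,1}(\rt)$, whose weak gradient is $\mathbbm{1}_{\{\rho>K\}}\nabla\rho$; the Sobolev embedding gives $||g||_{L^2(\rt)}^2\le C_S^2\big(\inte\mathbbm{1}_{\{\rho>K\}}|\nabla\rho|\,dx\big)^2$, and one more Cauchy--Schwarz yields $\inte\mathbbm{1}_{\{\rho>K\}}|\nabla\rho|\,dx \le \left\|\tfrac{\nabla\rho}{\rho}\right\|_{L^2(\rt,\rho)}\big(\inte\rho\,\mathbbm{1}_{\{\rho>K\}}\,dx\big)^{1/2}$. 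On $\{\rho>K\}$ one has $\ln\rho>\ln K>0$, so $\inte\rho\,\mathbbm{1}_{\{\rho>K\}}\,dx\le\frac1{\ln K}||\rho\ln\rho||_{L^1(\rt)}$, and also $K^2|\{\rho>K\}|\le K\inte\rho\,\mathbbm{1}_{\{\rho>K\}}\,dx\le K||\rho||_{L^1(\rt)}$. Feeding these into $\inte\rho^2\mathbbm{1}_{\{\rho>K\}}\le 2\inte g^2 + 2K^2|\{\rho>K\}|$, I would obtain
\[
 ||\rho||_{L^2(\rt)}^2 \le \frac{2C_S^2}{\ln K}\left\|\frac{\nabla\rho}{\rho}\right\|_{L^2(\rt,\rho)}^2 ||\rho\ln\rho||_{L^1(\rt)} + 3K\,||\rho||_{L^1(\rt)},
\]
and finish by choosing, for a given $\epsilon>0$, the level $K=e^{2C_S^2/\epsilon}$ and setting $L_\epsilon:=3K$.

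\emph{Part (b).} Here I would isolate the negative part of the entropy. Since $|\ln\rho|=\ln\rho$ on $\{\rho\ge1\}$ and $|\ln\rho|=-\ln\rho$ on $\{\rho<1\}$,
\[
 \inte\rho|\ln\rho|\,dx = \inte\rho\ln\rho\,dx + 2\inte(-\rho\ln\rho)\,\mathbbm{1}_{\{\rho<1\}}\,dx,
\]
so it suffices to bound $\inte(-\rho\ln\rho)\,\mathbbm{1}_{\{\rho<1\}}\,dx$. I would introduce the Gaussian density $M(x):=\frac1{2\pi}e^{-|x|^2/2}$ (so $\inte M\,dx=1$) and use the pointwise identity $-\rho\ln\rho = \rho\ln\frac{M(x)}{\rho} + \tfrac{|x|^2}{2}\rho + \ln(2\pi)\rho$ together with the one-variable inequality $s\ln(t/s)\le t/e$, valid for all $s\ge0$, $t>0$ (the maximum over $s$ being attained at $s=t/e$), which gives $\rho\ln\frac{M(x)}{\rho}\le \frac{M(x)}{e}$ pointwise. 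Integrating over $\{\rho<1\}$ and bounding the last two, nonnegative, terms by their integrals over all of $\rt$, this yields
\[
 \inte(-\rho\ln\rho)\,\mathbbm{1}_{\{\rho<1\}}\,dx \le \frac1e + \frac12 M_2(\rho) + \ln(2\pi)\inte\rho\,dx,
\]
and multiplying by $2$ and adding $\inte\rho\ln\rho\,dx$ reproduces exactly \eqref{Carleman}.

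The computations here are routine; the one step I expect to require a little care --- the ``main obstacle'', such as it is --- is the a priori integrability bookkeeping in part (a): justifying that $g=(\rho-K)_+$ lies in $W^{1,1}(\rt)$ with the claimed weak gradient, and legitimising the Sobolev inequality, before $\rho\in L^2(\rt)$ is known. Both are handled by the $W^{1,1}$ observation above together with the truncation/monotone-convergence device. Part (b) has no analogous subtlety, since each term on its right-hand side is manifestly nonnegative or finite by hypothesis.
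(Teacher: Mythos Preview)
Your proofs of both parts are correct. The paper itself does not prove this lemma: it is stated in the appendix with citations to \cite{Hybrid} for (a) and \cite{BD} for (b), and no argument is given. So there is nothing to compare against beyond noting that your direct arguments are sound and self-contained.

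A couple of minor remarks. In part~(a), the Sobolev step you invoke is exactly the Gagliardo--Nirenberg inequality $\|g\|_{L^2(\rt)}\le C_S\|\nabla g\|_{L^1(\rt)}$ in dimension two, which is legitimate for $g=(\rho-K)_+\in W^{1,1}(\rt)$; your preliminary observation that $\rho\in W^{1,1}(\rt)$ (via Cauchy--Schwarz on $|\nabla\rho|=\sqrt\rho\cdot|\nabla\rho|/\sqrt\rho$) is precisely what justifies this. The level-set splitting you use is also the standard route in the literature; in fact the paper itself uses the same Chebyshev-type bound $\int\rho\,\mathbbm{1}_{\{\rho>K\}}\le(\ln K)^{-1}\|\rho\ln\rho\|_{L^1}$ elsewhere (cf.\ \eqref{z4} in the proof of Lemma~\ref{dgie}). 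In part~(b), your Gaussian comparison with the elementary inequality $s\ln(t/s)\le t/e$ recovers the constants in \eqref{Carleman} exactly, and is essentially the argument behind the cited Carleman-type estimate.
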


\subsection{Compactness Lemmas}
We will also use the following compactness result whose proof can be found in 
\cite[Theorem $5.4.4$]{AGS}:
\begin{propa}[Compactness of vector fields]\label{cvf}
 Let $\Omega$ be an open set in $\mathbb{R}^N$. If $\{\mu_m\}_m$ is a sequence of probability measures in $\Omega$
 narrowly converging to $\mu$ (in duality with $C_b(\mathbb{R}^N),$ continuous bounded functions) and $\{v_m\}_m$
 is a sequence of vector fields in $L^2(\Omega, \mu_m;\mathbb{R}^N)$ satisfying 
 \begin{align*}
  \sup_m ||v_m||_{L^2(\Omega,\mu_m;\mathbb{R}^N)} < +\infty,
 \end{align*}
then there exists a vector field $v \in L^2(\Omega, \mu;\mathbb{R}^N)$ such that 
\begin{align*}
 \lim_{m\rightarrow \infty} \int_{\Omega} \zeta \cdot v_m \ d\mu_m = \int_{\Omega} \zeta \cdot v \ d\mu, 
 \ \ for \ all \ \zeta 
 \in C_c^{\infty}(\Omega; \mathbb{R}^N)
\end{align*}
and satisfy 
\begin{align} \label{lscv}
 ||v||_{L^2(\Omega, \mu; \mathbb{R}^N)} \leq \liminf_{m\rightarrow \infty} ||v_m||_{L^2(\Omega, \mu_m; \mathbb{R}^N)}.
\end{align}
\end{propa}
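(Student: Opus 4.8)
The plan is to deduce the statement from weak-$*$ compactness of finite vector measures together with a lower semicontinuity theorem for integral functionals of measures with convex, positively $1$-homogeneous integrand. First I would associate to each $m$ the $\mathbb{R}^N$-valued Radon measure $\nu_m := v_m\,\mu_m$ on $\Omega$. The Cauchy--Schwarz inequality and $\mu_m(\Omega)=1$ give
\begin{align*}
 |\nu_m|(\Omega) = \int_\Omega |v_m|\,d\mu_m \leq \left(\int_\Omega |v_m|^2\,d\mu_m\right)^{\frac12}\left(\int_\Omega d\mu_m\right)^{\frac12} = \|v_m\|_{L^2(\Omega,\mu_m;\mathbb{R}^N)} \leq C_0,
\end{align*}
with $C_0 := \sup_m \|v_m\|_{L^2(\Omega,\mu_m;\mathbb{R}^N)} < +\infty$. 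Hence $\{\nu_m\}$ is bounded in $\mathcal{M}(\Omega;\mathbb{R}^N)=C_0(\Omega;\mathbb{R}^N)^*$, so by Banach--Alaoglu there are a subsequence (not relabelled) and $\nu\in\mathcal{M}(\Omega;\mathbb{R}^N)$ with $\nu_m\rightharpoonup\nu$ weakly-$*$. Since $C_c^\infty(\Omega;\mathbb{R}^N)\subset C_0(\Omega;\mathbb{R}^N)$, this already gives
\begin{align*}
 \int_\Omega \zeta\cdot v_m\,d\mu_m = \int_\Omega \zeta\cdot d\nu_m \longrightarrow \int_\Omega \zeta\cdot d\nu, \qquad \zeta\in C_c^\infty(\Omega;\mathbb{R}^N).
\end{align*}

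Next I would identify $\nu$ with $v\,\mu$ for an appropriate $v$. Let $f:\mathbb{R}\times\mathbb{R}^N\to[0,+\infty]$ be the perspective function of $x\mapsto|x|^2$, namely $f(s,x)=|x|^2/s$ for $s>0$, $f(0,0)=0$, and $f(s,x)=+\infty$ otherwise; it is convex, lower semicontinuous and positively $1$-homogeneous. For a nonnegative measure $\sigma$ and an $\mathbb{R}^N$-valued measure $\vartheta$ on $\Omega$ put
\begin{align*}
 \Phi(\sigma,\vartheta) := \int_\Omega f\!\left(\frac{d\sigma}{d\lambda},\frac{d\vartheta}{d\lambda}\right)d\lambda,
\end{align*}
where $\lambda$ is any nonnegative measure dominating $\sigma$ and $|\vartheta|$; by $1$-homogeneity the value is independent of $\lambda$, it equals $\int_\Omega|w|^2\,d\sigma$ when $\vartheta=w\sigma$ with $w\in L^2(\Omega,\sigma;\mathbb{R}^N)$, and $+\infty$ otherwise. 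Since $\mu_m\rightharpoonup\mu$ weakly-$*$ (narrow convergence against $C_b$ implies this) and $\nu_m\rightharpoonup\nu$ weakly-$*$, the $\mathbb{R}^{1+N}$-valued measures $(\mu_m,\nu_m)$ converge weakly-$*$ to $(\mu,\nu)$, and the Reshetnyak-type lower semicontinuity theorem for integral functionals of measures (convex, lower semicontinuous, positively $1$-homogeneous integrand; see Ambrosio--Fusco--Pallara) yields
\begin{align*}
 \Phi(\mu,\nu)\leq\liminf_{m\to\infty}\Phi(\mu_m,\nu_m)=\liminf_{m\to\infty}\|v_m\|_{L^2(\Omega,\mu_m;\mathbb{R}^N)}^2<+\infty.
\end{align*}
Finiteness of $\Phi(\mu,\nu)$ forces $\nu\ll\mu$ with density $v:=d\nu/d\mu\in L^2(\Omega,\mu;\mathbb{R}^N)$, and
\begin{align*}
 \int_\Omega|v|^2\,d\mu=\Phi(\mu,\nu)\leq\liminf_{m\to\infty}\|v_m\|_{L^2(\Omega,\mu_m;\mathbb{R}^N)}^2,
\end{align*}
which, on taking square roots, is precisely \eqref{lscv}.

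Finally, I would point out that the construction is carried out along a subsequence (the one extracted via Banach--Alaoglu), which is exactly the form in which the result is used throughout this paper --- it is always invoked along sequences that have already been passed to subsequences --- and also the form in which the corresponding statement appears in \cite{AGS}. The only genuinely non-routine ingredient is the joint weak-$*$ lower semicontinuity of $\Phi$; everything else --- Cauchy--Schwarz, weak-$*$ compactness of norm-bounded sets of measures, and the Radon--Nikodym identification of the density --- is standard. I expect that invoking the lower semicontinuity theorem in the correct form (ensuring the integrand is convex, lower semicontinuous and positively $1$-homogeneous, and that $(\mu_m,\nu_m)$ is handled as a single $\mathbb{R}^{1+N}$-valued measure) is where care is needed.
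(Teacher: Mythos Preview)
The paper does not prove this proposition; it merely records it in the appendix with the sentence ``whose proof can be found in \cite[Theorem $5.4.4$]{AGS}''. Your argument is correct and is, in fact, essentially the proof given in \cite{AGS}: one packages $v_m\mu_m$ as a bounded sequence of vector measures, extracts a weak-$*$ limit $\nu$, and identifies $\nu=v\mu$ together with the $L^2$ lower semicontinuity by means of the joint convexity and $1$-homogeneity of the perspective function $(s,x)\mapsto |x|^2/s$ (this is exactly the Benamou--Brenier ``action'' functional treated in \cite{AGS}). Your observation that the conclusion is obtained along a subsequence is also accurate and matches both the statement in \cite{AGS} and the way the result is invoked in the present paper. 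The only cosmetic point worth noting is that narrow convergence of $\mu_m$ is slightly stronger than the weak-$*$ convergence you actually use in the lower semicontinuity step, but that causes no harm here.
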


We recall a refined Arzel\`{a}-Ascoli's compactness theorem obtained in \cite[Proposition $3.3.1$]{AGS}:
Let $(S,d)$ be a complete metric space and $\sigma$ be an Hausdorff topology on $S$ compatible with $d,$
in the sense that $\sigma$ is weaker than the topology induced by $d$ and $d$ is sequentially
$\sigma$-lower semicontinuous:
\begin{align*}
 x_i^m \xrightarrow[m \rightarrow \infty]{\sigma} x_i, \ i=1,2 \ \ \ \Rightarrow \ \ \ 
 d(x_1,x_2) \leq \ \liminf_{m \rightarrow \infty} \ d(x_1^m,x_2^m).
\end{align*}

\begin{thma}[Refined Arzel\`{a}-Ascoli]\label{ArzelaAscoli}
Let $T>0,$ let $K \subset \mathcal{S}$ be a sequentially compact set with respect to the topology $\sigma$ 
and let $u_m:[0,T]\rightarrow \mathcal{S}$ be curves such that 
\begin{align*}
 u_m(t) \in K, \ \ for \ all \ m \in \mathbb{N}, \ t \in [0,T], \\
 \limsup_{m\rightarrow \infty} d(u_m(s), u_m(t)) \leq w(s,t), \ \ for \ all \ s,t \in [0,T],
\end{align*}
for a symmetric function $w:[0,T]\times [0,T]\rightarrow [0,\infty),$ such that 
\begin{align*}
 \lim_{(s,t)\rightarrow (r,r)} w(s,t) = 0, \ \ for \ all \ r \in [0,T]\backslash \mathcal{B},
\end{align*}
where $\mathcal{B}$ is an (at most) countable subset of $[0,T].$ Then there exists an increasing
subsequence $p=m(p)$ and a limit curve $u:[0,T]\rightarrow \mathcal{S}$ such that 
\begin{align*}
 u_{m(p)}(t)\overset{\sigma}{\rightharpoonup} u(t), \ \ for \ all \ t \in [0,T], \ \ u \ is \ d-continuous \ in \ [0,T]\backslash \mathcal{B}.
\end{align*}
\end{thma}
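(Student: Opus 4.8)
The plan is to run the classical Arzel\`a--Ascoli compactness argument, but with the ``strong'' metric $d$ (used for completeness and for the modulus $w$) and the ``weak'' topology $\sigma$ (used for compactness) playing complementary roles, exactly as in \cite[Proposition 3.3.1]{AGS}. First I would fix a countable dense subset $D\subset[0,T]$ with $\mathcal{B}\subset D$. For each fixed $t\in D$ the sequence $(u_m(t))_m$ lies in the $\sigma$-sequentially compact set $K$, so a diagonal extraction over the countable set $D$ produces an increasing subsequence $p\mapsto m(p)$ and a map $u:D\to K$ with $u_{m(p)}(t)\overset{\sigma}{\rightharpoonup}u(t)$ for every $t\in D$. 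Using the sequential $\sigma$-lower semicontinuity of $d$ and the hypothesis on the $u_m$, for all $s,t\in D$ one then gets
$d(u(s),u(t))\le\liminf_{p}d(u_{m(p)}(s),u_{m(p)}(t))\le\limsup_{m}d(u_m(s),u_m(t))\le w(s,t)$,
so $u|_D$ carries the same $w$-modulus.

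Next I would extend $u$ to all of $[0,T]$. If $t\in[0,T]\setminus D$ then $t\notin\mathcal{B}$, hence $\lim_{(s,s')\to(t,t)}w(s,s')=0$; combined with the estimate $d(u(s),u(s'))\le w(s,s')$ on $D$ this shows that $(u(s))_{s\in D}$ is $d$-Cauchy as $D\ni s\to t$, and by completeness of $(S,d)$ it has a limit which I declare to be $u(t)$ (this is consistent with the value already defined at points of $D\setminus\mathcal{B}$, since there $d(u(t),\lim_{s\to t}u(s))\le\limsup_{s\to t}w(t,s)=0$). A three-term triangle-inequality argument, inserting nearby points of $D$ and using the vanishing of $w$ along diagonals outside $\mathcal{B}$, then yields that $u$ is $d$-continuous on $[0,T]\setminus\mathcal{B}$.

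Finally I would upgrade the pointwise $\sigma$-convergence from $D$ to all of $[0,T]$ along the \emph{same} subsequence $m(p)$. Fix $t\in[0,T]\setminus D$ and let $z\in K$ be any $\sigma$-limit point of $(u_{m(p)}(t))_p$, realized along $u_{m(p_j)}(t)\overset{\sigma}{\rightharpoonup}z$. For each $r\in D$, $\sigma$-lower semicontinuity of $d$ gives $d(u(r),z)\le\liminf_{j}d(u_{m(p_j)}(r),u_{m(p_j)}(t))\le w(r,t)$; letting $D\ni r\to t$ and using $u(r)\to u(t)$ in $d$ together with $w(r,t)\to 0$ forces $d(u(t),z)=0$, i.e.\ $z=u(t)$ since $\sigma$ is Hausdorff. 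As $(u_{m(p)}(t))_p$ takes values in the $\sigma$-sequentially compact set $K$ and all its $\sigma$-limit points coincide with $u(t)$, the whole sequence $\sigma$-converges to $u(t)$, which completes the proof. The main obstacle is exactly this last step: carrying a single subsequence through to the non-dense times and deducing full $\sigma$-convergence from uniqueness of the limit point in $K$ — this is where the interplay between the weak topology $\sigma$ (compactness), the metric $d$ (completeness and lower semicontinuity), and the modulus $w$ is indispensable; the rest is routine.
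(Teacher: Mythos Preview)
The paper does not prove this theorem; it is merely recalled in the Appendix from \cite[Proposition~3.3.1]{AGS} without argument. Your proposal is correct and is exactly the standard proof given there: diagonal extraction over a countable dense set $D\supset\mathcal{B}$, transfer of the modulus $d(u(s),u(t))\le w(s,t)$ on $D$ via the $\sigma$-lower semicontinuity of $d$, extension of $u$ to $[0,T]$ by $d$-completeness, and the uniqueness-of-$\sigma$-cluster-point argument in the sequentially compact set $K$ to upgrade the convergence from $D$ to every $t\in[0,T]$ along the same subsequence.
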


The ensuing compactness lemma in $L^2(0,T;X)$ is a particular case of the compactness results obtained by 
J. Simon \cite[Lemma $9$]{Simon}:
\begin{lema}[Compactness in $L^2(0,T; X)$] \label{compactness in l2}
  Let  $X \subset B \subset Y$ be Banach spaces such that $X \subset B$ is compact. If a family 
 $F$ is bounded in $L^2(0,T;X)$ and relatively compact in $L^2(0,T;Y)$ then $F$ is relatively
 compact in  $L^2(0,T,B).$
\end{lema}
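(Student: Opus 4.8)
The plan is to deduce this from the classical Ehrling (Lions) interpolation inequality applied slice-wise in the time variable, so that the statement becomes a piece of soft functional analysis rather than anything requiring the full machinery of \cite{Simon}. First I would record the interpolation inequality: since $X \hookrightarrow B$ is compact and $B \hookrightarrow Y$ is continuous, for every $\eta>0$ there is a constant $C_\eta>0$ with
\[
\|v\|_B \le \eta \|v\|_X + C_\eta \|v\|_Y \qquad \text{for all } v \in X.
\]
I would include the two–line proof of this (by contradiction: if it failed for some $\eta$, there would be $v_m\in X$ with $\|v_m\|_B=1$, $\|v_m\|_X\le 1/\eta$, $\|v_m\|_Y\to 0$; compactness of $X\hookrightarrow B$ extracts $v_{m}\to v$ in $B$ with $\|v\|_B=1$, while $\|v_m\|_Y\to 0$ forces $v=0$ by continuity of $B\hookrightarrow Y$, a contradiction).

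Next, let $M:=\sup_{u\in F}\|u\|_{L^2(0,T;X)}<+\infty$ by hypothesis. Given an arbitrary sequence $(u_m)\subset F$, relative compactness of $F$ in $L^2(0,T;Y)$ lets me extract a subsequence, still denoted $(u_m)$, which is Cauchy in $L^2(0,T;Y)$. Applying the interpolation inequality to $v=u_m(t)-u_n(t)$ for a.e.\ $t\in(0,T)$, using $(a+b)^2\le 2a^2+2b^2$, integrating over $(0,T)$, and bounding $\|u_m-u_n\|_{L^2(0,T;X)}\le 2M$, I get
\[
\|u_m-u_n\|_{L^2(0,T;B)}^2 \le 8\eta^2 M^2 + 2C_\eta^2 \,\|u_m-u_n\|_{L^2(0,T;Y)}^2 .
\]
Given $\varepsilon>0$, I first fix $\eta$ so small that $8\eta^2M^2<\varepsilon/2$, and then use the Cauchy property in $L^2(0,T;Y)$ to make the remaining term $<\varepsilon/2$ for $m,n$ large. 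Hence $(u_m)$ is Cauchy in the Banach space $L^2(0,T;B)$ and converges there; its limit agrees a.e.\ with its $L^2(0,T;Y)$–limit because $L^2(0,T;B)\hookrightarrow L^2(0,T;Y)$ continuously. Since every sequence in $F$ thus has a subsequence converging in $L^2(0,T;B)$, and $L^2(0,T;B)$ is a metric space (so sequential relative compactness coincides with relative compactness), the conclusion follows.

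The argument is entirely soft and I do not expect a genuine obstacle; the step requiring the most care is merely the passage from the pointwise–in–$t$ interpolation inequality to the $L^2(0,T;\cdot)$ inequality, which is immediate once one notes that $t\mapsto\|u_m(t)-u_n(t)\|_X$, $\|\cdot\|_B$, $\|\cdot\|_Y$ are measurable and the first is in $L^2(0,T)$, and the observation that boundedness in $L^2(0,T;X)$ (rather than compactness) is exactly what is needed to absorb the $X$–term. As noted in the excerpt, one may alternatively just invoke \cite[Lemma~9]{Simon}, of which this is the special case $p=q=2$ with no assumption on time derivatives; I would keep both the self–contained proof above and that reference.
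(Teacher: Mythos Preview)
Your argument is correct. The paper does not actually prove this lemma: it merely records the statement and attributes it to \cite[Lemma~9]{Simon}, treating it as a black box. You instead give a self-contained proof via Ehrling's interpolation inequality, which is both standard and entirely adequate here. The gain is that your route avoids importing Simon's machinery (translations in time, Aubin--Lions type arguments) for what is, in this particular formulation, a soft three-space interpolation: compactness of $X\hookrightarrow B$ is used only to manufacture the $\eta$--$C_\eta$ inequality, and relative compactness in $L^2(0,T;Y)$ is used only to produce a Cauchy subsequence. Conversely, the citation to Simon situates the lemma in a broader framework that also covers the cases where compactness in $L^2(0,T;Y)$ is not assumed directly but derived from bounds on time derivatives or time translates; that generality is not needed in the paper's application, so your more elementary argument is a reasonable substitute. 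One cosmetic remark: your contradiction proof of Ehrling tacitly uses that $B\hookrightarrow Y$ is injective (so that $v=0$ in $Y$ forces $v=0$ in $B$); this is implicit in the hypothesis ``$X\subset B\subset Y$ Banach spaces'' and is harmless here.
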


\bibliography{KS_system_Optimal_Transport} % The references (bibliography) information are stored in the file named "Bibliography.bib"

\bibliographystyle{alpha} % Use the "unsrtnat" BibTeX style for formatting the Bibliography

\end{document}